\documentclass{article}

\usepackage[all]{xy}
\usepackage[dvips]{graphicx}
\usepackage{amsmath,amssymb}
\usepackage{amsthm}
\newtheorem{theorem}{Theorem}[section]
\newtheorem{prop}[theorem]{Proposition}
\newtheorem{lemma}[theorem]{Lemma}

\newtheorem{cor}[theorem]{Corollary}

\theoremstyle{definition}
\newtheorem{defini}[theorem]{Definition}
\newtheorem{example}[theorem]{Example}
\newtheorem{remark}[theorem]{Remark}

\newcommand{\ep}{\varepsilon}

\newcommand{\RR}{\mathbb R}
\newcommand{\CC}{C^{\infty}}
\newcommand{\ZZ}{\mathbb Z}

\newcommand{\QQ}{\mathbb Q}

\newcommand{\Sign}{{\rm Sign}}

\newcommand{\Crit}{{\rm Crit}}

\newcommand{\grad}{{\rm grad}}

 
\setlength{\topmargin}{20mm}
\addtolength{\topmargin}{-1in}
\setlength{\oddsidemargin}{20mm}
\addtolength{\oddsidemargin}{-0.5in}
\setlength{\evensidemargin}{20mm}
\addtolength{\evensidemargin}{-0.5in}
\setlength{\textwidth}{150mm}
\setlength{\textheight}{224mm}

\title{An invariant of rational homology 3-spheres via vector fields}


\author{Tatsuro Shimizu\footnote{shimizu@ms.u-tokyo.ac.jp}}
\begin{document}
\maketitle
\begin{abstract}
We define an invariant of rational homology 3-spheres via vector fields.
The construction of our invariant is a generalization of both that of the Kontsevich-Kuperberg-Thurston invariant and that of
Watanabe's Morse homotopy invariant, which implies the equivalence of these two invariants.
\end{abstract}
\large
\section{Introduction.}\label{intro}
In this paper, we construct an invariant $\widetilde z_n$ of rational homology 3-spheres via vector fields.
As an application, we prove that the Kontsevich-Kuperberg-Thurston invariant $z^{\rm KKT}=\{z_n^{\rm KKT}\}_{n\in\mathbb N}$ coincides with Watanabe's Morse homotopy invariants $z^{\rm FW}=\{z^{\rm FW}_{2n,3n}\}_{n\in \mathbb N}$ for any rational homology 3-sphere.
Note that both $z^{\rm KKT}_n$ and $z_n^{\rm FW}$ are topological invariants which take values in the real vector space $\mathcal A_n(\emptyset)$ of Jacobi diagrams.

M.~Kontsevich~\cite{Kon}, S.~Axelrod and I. M.~Singer~\cite{AS} proposed the Chern-Simons perturbation theory and gave a topological invariant of 3-manifolds. 
Based on Kontsevich' s work, G.~Kuperberg and D.~Thurston constructed in \cite{KKT} a topological invariant $z^{\rm KKT}$ of rational homology 3-spheres. 
Kuperberg and Thurston proved that $z^{\rm KKT}$ is a universal finite type invariant for homology 3-spheres by showing surgery formulas.
C.~Lescop obtained surgery formulas of other types in \cite{Splitting} and \cite{Surgery}.
Lescop reviewed $z^{\rm KKT}$ and gave a more direct proof of well-definedness of this invariant in \cite{Lescop}.

K.~Fukaya~\cite{Fukaya} constructed a topological invariant of 3-manifolds with local coefficients using Morse functions.
Fukaya's invariant is closely related to the theta graph $\theta$. His invariant essentially takes values in $\mathcal A_1(\emptyset)$.
M. Futaki~\cite{Futaki} pointed out that Fukaya's invariant depends on the choice of Morse functions. 
T.~Watanabe~\cite{Watanabe} gave an invariant of rational homology 3-spheres without local coefficients using Morse functions.
He also investigated higher loop graphs (and broken graphs) and then he defined a topological invariant $z_{2n,3n}^{\rm FW}$ of (rational) homology 3-spheres taking values in $\mathcal A_n(\emptyset)$ for each $n\in \mathbb N$.
The construction of $z_{2,3}^{\rm FW}$ is related to the construction of a Morse propagator constructed by Lescop~\cite{LescopHeegaard}.

Fukaya's construction is inspired by the construction of the 2-loop term of the Chern-Simons perturbation theory
and he conjectured in \S 8 in \cite{Fukaya} that his invariant is related to the 2-loop term of the Chern-Simons perturbation theory.
Watanabe also conjectured in Conjecture 1.2 in \cite{Watanabe} that his invariants is related to Axelrod and Singer's invariant~\cite{AS} or Kontsevich's invariant~\cite{Kon}.
 
The main theorem of this paper is the following.
\begin{theorem}\label{theorem1}
$z_n^{\rm KKT}(Y)=z_{2n,3n}^{\rm FW}(Y)$ for any rational homology 3-sphere $Y$, for any $n\in \mathbb N$.
\end{theorem}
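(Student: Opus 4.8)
The plan is to interpose between the two invariants a common refinement: an invariant $\widetilde z_n(Y)$ attached to $Y$ together with an auxiliary vector field $v$ on $Y$ (with nondegenerate zeros), defined by configuration space integrals whose ``propagator'' $2$-form on the (Fulton--MacPherson type) compactified configuration space $\overline{C_2}(Y)$ is manufactured from $v$. Away from $\Crit(v)$ the vector field is a section of the unit tangent bundle, hence gives a map from a blow-up of $\overline{C_2}(Y)$ to $S^2$ pulling back the generator; after adding a correction supported near $\Crit(v)$ one obtains a closed form $\omega(v)$ whose restriction to the boundary sphere bundle is the standard homogeneous form. Given $\omega(v)$ one integrates the product of its pullbacks over the graph configuration spaces indexed by trivalent Jacobi diagrams with $2n$ vertices, exactly as in \cite{KKT} and \cite{Lescop}, and subtracts the usual anomaly counterterm, now also recording the homotopy data of $v$ together with the induced framing. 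The first step is to prove that $\widetilde z_n(Y,v)$, so corrected, is independent of $v$, of the Riemannian metric, and of the remaining auxiliary choices, hence defines a topological invariant $\widetilde z_n(Y)$ valued in $\mathcal A_n(\emptyset)$; this is a repetition of the Kuperberg--Thurston / Lescop well-definedness argument, the new ingredients being that the space of admissible propagators is parametrized by vector fields rather than by closed forms, and that one must verify invariance under a birth--death of a canceling pair of zeros of $v$.

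Granting this, the identification $\widetilde z_n(Y)=z_n^{\rm KKT}(Y)$ is essentially a matter of unwinding definitions. Since $Y$ is a closed oriented $3$-manifold, $\chi(Y)=0$ and $Y$ carries a nonvanishing vector field $v_0$; for such a $v_0$ one has $\Crit(v_0)=\emptyset$, no correction near zeros is needed, and $\omega(v_0)$ is precisely an admissible propagator in the sense of \cite{KKT,Lescop} associated to the section at infinity determined by $v_0$. The configuration space integral defining $\widetilde z_n(Y,v_0)$ is then literally the Kontsevich--Kuperberg--Thurston integral, and the counterterms match because a nonvanishing $v_0$ and the framing it induces are exactly the data entering the KKT anomaly correction. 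Hence $\widetilde z_n(Y)=\widetilde z_n(Y,v_0)=z_n^{\rm KKT}(Y)$.

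The substantive half is the identification $\widetilde z_n(Y)=z^{\rm FW}_{2n,3n}(Y)$, obtained by evaluating $\widetilde z_n(Y,v)$ at $v=\grad_g f$ for a generic Morse function $f$ and metric $g$. The key point is that for a gradient vector field the propagator $\omega(\grad_g f)$ may be represented (up to a controlled exact term, after smoothing) by a current supported on the union over critical points $p$ of the products $W^{\mathrm u}(p)\times W^{\mathrm s}(p)$ of ascending and descending manifolds inside $\overline{C_2}(Y)$ --- a ``Morse propagator'' in the sense of Lescop~\cite{LescopHeegaard} and of Watanabe~\cite{Watanabe}. Substituting this into the configuration space integral collapses every integration onto the locus where the $2n$ points lie along broken gradient trajectories, so that the surviving contributions are exactly the signed counts of Watanabe's flow graphs: the faces of $\overline{C_{2n}}(Y)$ at which points escape the flow lines contribute zero, and the faces at which trajectory segments break match Watanabe's broken-graph terms, while the anomaly counterterm of $\widetilde z_n$ likewise reduces to Watanabe's. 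Thus $\widetilde z_n(Y)=\widetilde z_n(Y,\grad_g f)=z^{\rm FW}_{2n,3n}(Y)$, and combining with the previous paragraph proves the theorem.

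The main obstacle I anticipate is precisely this localization together with the attendant compactness and orientation bookkeeping: one must match the corner structure of $\overline{C_{2n}}(Y)$, restricted to the support of the Morse propagators, with the compactification of Watanabe's moduli spaces of (possibly broken) flow graphs, show that all ``bad'' faces --- collisions of points off the flow lines, and coincidences of a point with a critical point in the wrong stratum --- produce no net contribution, and check that the signs and anomaly terms agree on the nose. For $n=1$ this is close to the analysis underlying Lescop's Morse propagator and the $\theta$-graph computations of Fukaya and Watanabe; the general case requires treating higher-loop graphs and broken graphs uniformly, which is where the bulk of the technical work will lie. A secondary difficulty is the birth--death invariance used in the first step, that is, controlling how $\omega(v)$ and the counterterm change when a canceling pair of zeros of $v$ is created or destroyed.
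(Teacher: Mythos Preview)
Your overall architecture---define an intermediate invariant $\widetilde z_n(Y)$ from vector fields, then specialize on one side to a nonvanishing vector field coming from a framing and on the other to a Morse gradient---is exactly the paper's. Your description of the $z^{\rm FW}$ side is also close to what the paper does: it builds a compactified $4$-chain $\mathcal M_S(\pm f)$ in $C_2(Y)$ whose boundary is the cycle $c(\grad f)$ used to define the propagator, and then invokes an intersection-theoretic (``simplicial propagator'') reformulation of $\widetilde z_n$ so that the principal term literally becomes the signed count defining $z^{\rm FW}_{2n,3n}(Y;\vec f)$. The anomaly terms on that side are equal by fiat, so no further work is needed there.

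The genuine gap is on the $z^{\rm KKT}$ side, which you describe as ``essentially a matter of unwinding definitions.'' It is not. First, a nonvanishing vector field on $Y$ does not by itself determine a framing; the paper runs the specialization the other way, taking $\gamma=\tau_Y^{*}a$ from a given framing $\tau_Y$ of $Y\setminus\infty$ (with prescribed behaviour near $\infty$), so that the principal terms coincide tautologically. Second---and this is where your proposal is too optimistic---the anomaly term $\widetilde z_n^{\rm anomaly}(\vec\gamma)$ is \emph{not} defined as a signature defect: it is an integral over $S_{2n}(T^vX)$ for a bounding $4$-manifold $X$ with $\chi(X)=0$, normalized by universal constants $\mu_n$ and $c_n$. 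Showing that, for $\vec\gamma=\tau_Y^{*}\vec a$, this quantity equals $\tfrac14\sigma_{Y\setminus\infty}(\tau_Y)\,\delta_n$ is the actual content of the KKT half of the theorem. The paper proves it by a spin-cobordism argument: one first checks the equation for $Y=S^3$ with the standard framing (where both sides vanish), deduces it for all framings of $S^3$ by invariance of $\widetilde z_n$, and then for general $Y$ chooses a spin $4$-manifold $X_0$ with $\chi(X_0)=0$ and $\partial X_0=Y\sqcup k(-S^3)$, extends the framing over $X_0$, and compares. Along the way one must compute the constants $\mu_n=\tfrac34\delta_n$ and $c_n=\tfrac12\delta_n$ by evaluating on explicit $4$-manifolds (punctured $K3\#\,mT^4$'s); without these identities the bookkeeping does not close up. None of this is visible if one assumes the counterterms ``match because a nonvanishing $v_0$ and the framing it induces are exactly the data entering the KKT anomaly correction.''

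Two smaller remarks. The paper works throughout with vector fields on $Y\setminus\infty$ that are prescribed near $\infty$ (equal to $-\grad q_a$ there); this boundary condition is what makes the propagator construction and the anomaly normalization go through, and is not optional. And the well-definedness of $\widetilde z_n$ does not require a birth--death analysis: it follows from the standard Stokes argument over a one-parameter family of propagators, exactly as in Lescop's proof for $z^{\rm KKT}$, because the propagators for different admissible vector fields differ by an exact form with controlled boundary behaviour.
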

The idea of the proof of Theorem~\ref{theorem1} is the following.
We construct an invariant $\widetilde z_n$ of rational homology 3-spheres using vector fields. 
Let $Y$ be a rational homology 3-sphere and let $\infty\in Y$ be a base point.
$z_n^{\rm KKT}(Y)$, $z^{\rm FW}_{2n,3n}(Y)$ and $\widetilde z_n$ are defined by using an extra information of $Y$.
The extra information used in definition of $z_n^{\rm KKT}$, $z_{2n,3n}^{\rm FW}$ and $\widetilde z_n$ are a framing of $Y\setminus\infty$,
a family of Morse functions on $Y\setminus \infty$ and a family of vector fields on $Y\setminus \infty$, respectively.
We prove that it is possible to regard the constructions of $z^{\rm FW}_{2n,3n}$ and $z_n^{\rm KKT}$ as special cases of the construction of $\widetilde z_n$.
In fact a framing gives us a non-vanishing vector field and a Morse function gives us a gradient vector field.
The principal term of $\widetilde z_1$ is related to Lescop's invariant \cite{comb} for rational homology 3-spheres with non-vanishing vector fields.

The organization of this paper is as follows.
In Section 2 we prepare some notations.
In Section 3 we review notions and facts about configuration spaces and graphs discussed by Lescop \cite{Lescop} and Watanabe \cite{Watanabe}.
In Section 4 we define the invariants $\widetilde z_n$ using vector fields and prove the independence of the choice of vector fields.
In Section 5 we review the construction in Lescop \cite{Lescop} of $z^{\rm KKT}$.  
In Section 6 we review the construction of $z^{\rm FW}$ in Watanabe \cite{Watanabe} with a little modification.
In Section 7 we prove Theorem~\ref{theorem1}.
In Section 8 we prove some Lemmas for a compactification of the moduli space of flow graphs used in Sections 6 and 7. 
In Appendix A we give a more direct proof of Theorem~\ref{theorem1} in the case of $n=1$.
\subsection*{Acknowledgments.}
The author would like to thank Professor Mikio Furuta for his encouragement and 
for helpful comments and suggestions in particular about Morse functions 
on punctured manifolds.
The author would also like to thank Professor Tadayuki Watanabe for his
helpful comments and suggestions for an earlier draft and his patient explanation of the 
detail of the construction of his invariant.
The author also expresses his appreciation to Professor Christine Lescop for her
kind and  helpful comments and suggestions to improve an earlier
draft. The last part of the proof of
Lemma~\ref{keylemma} is due to her ideas.
\section{Notation and some remarks.}
In this article, all manifolds are smooth and oriented.
Homology and cohomology are with rational coefficients.
Let $c$ be a $\QQ$-linear sum of finitely many maps from compact $k$-dimensional manifolds with corners to a topological space $X$.  
We consider $c$ as a $k$-chain of $X$ via appropriate  (not unique) triangulations of each $k$-manifold.
Let $Y$ be a submanifold of a manifold $X$.
Let $c=\sum_ia_i(f_i:\Sigma_i\to X)$ be a chain of $X$, where $f_i:\Sigma_i\to X$ are smooth maps from compact manifolds with corners and $a_i$ are rational numbers.
If $f_i$ is transverse to $Y$ for each $i$, then we say that $c$ is transverse to $f$.

When $B$ is a submanifold of a manifold $A$, We denote by $A(B)$ the manifold given by real blowing up of $A$ along $B$.
Namely $A(B)=(A\setminus B)\cup S\nu_{B}$ where $\nu_B$ is the normal bundle of $B\subset A$ and $S\nu_B$ is the sphere bundle of $\nu_B$
(see \cite{KKT} for more details of real blow up).
Note that if a submanifold $C\subset A$ is transverse to $B$, then $C(A\cap B)$ is a proper embedded submanifold of $A(B)$.

Let us denote by $\Delta\subset A\times \cdots \times A$ the fat diagonal of the direct sum of a manifold $A$. 

Let us denote by $\underline{\RR^k}$ the trivial vector bundle over an appropriate base space with rank $k\in \mathbb N$.
For a real vector space $X$, we denote by $SX$ or $S(X)$ the unit sphere of $X$ 
and for a real vector bundle $E\to B$ over a manifold $B$, we denote by $SE$ or $S(E)$ the unit sphere bundle of $E$.   
\subsection{Notations about 3-manifolds and Morse functions.}
Let $f:Y\to \RR$ be a Morse function on a 3-dimensional manifold $Y$ with a metric satisfying the Morse-Smale condition.
Let $\grad f$ be the gradient vector field of $f$ and the metric of $Y$.
Let us denote by $\Crit (f)$ the set of all critical points of $f$.
Let $\{\Phi_f^t\}_{t\in\RR}:Y\to Y$ be the 1-parameter group of diffeomorphisms associated to $-\grad~f$.
We denote by
$$\mathcal A_p=\{x\in Y\mid \lim_{t\to\infty}\Phi_f^t(x)=p\}~{\rm and}$$
$$\mathcal D_p=\{x\in Y\mid \lim_{t\to-\infty}\Phi_f^t(x)=p\}$$
the ascending manifold and descending manifold at $p\in \Crit(f)$ respectively.
\subsection{Conventions on orientations.}
Boundaries are oriented by the outward normal first convention.
Products are oriented by the order of the factors.
Let $y\in B$ be a regular point of a smooth map $f:A\to B$ between smooth manifolds $A$ and $B$.
Let us orient $f^{-1}(y)$ by the following rules: 
$T_xf^{-1}(y)\oplus f^*T_{f(x)}B=T_xA$, for any $x\in f^{-1}(C)$
where $f^*:T_{f(x)}B\to T_xA$ is a linear map satisfying $f_*\circ f^*={\rm id}_{T_{f(x)}B}$.
We denote by $-X$ the orientation reversed manifold of oriented manifold $X$.

Suppose that $Y,f$ and $\grad f$ are as above.
Let us orient ascending manifolds and descending manifolds by imposing the condition:
$T_p\mathcal A_p\oplus T_p\mathcal D_p\cong T_pY$ for any $p\in \Crit(f)$.
Let $p,q\in \Crit(f)$ be the critical points of index $2$ and $1$ respectively.
By the Morse-Smale condition, $\mathcal D_q\cap \mathcal A_p$ is a 1-manifold. 
Let us orient $\mathcal D_q\cap \mathcal A_p$ by the following rule:
$$T_{q'}(\mathcal D_q\cap \mathcal A_p)\oplus T_{q'}\mathcal D_q\cong T_{q'}\mathcal D_p,$$
where $q'\in \mathcal D_q\cap \mathcal A_p$ is a point near $q$.
\begin{figure}[h]
\begin{center}
\includegraphics[width=140pt]{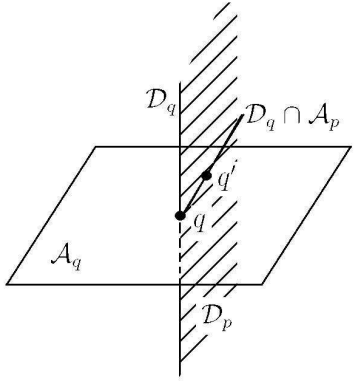} 
\caption{The orientation of $\mathcal D_q\cap \mathcal A_p$.}
\end{center} 
\end{figure}
\section{Configuration space and Jacobi diagrams.}
In this section, we introduce some notations about configuration spaces and Jacobi diagrams.
Most of this section depends on Lescop \cite{Lescop}.
\subsection{The configuration space $C_{2n}(Y)$.}\label{c_2(Y)}
The reference here is Lescop \cite[\S 1.1,1.2,2.1]{Lescop}.

Let $Y$ be a homology 3-sphere with a base point $\infty$.
Let $N(\infty;Y)$ be a regular neighborhood (that is diffeomorphic to an open ball) of $\infty$ in $Y$
and let $N(\infty;S^3)$ be a regular neighborhood of $\infty$ in $S^3=\RR^3\cup \infty$.
We fix a diffeomorphism $\tau^{\infty}:(N(\infty;Y),\infty)\cong (N(\infty;S^3),\infty)$
between $N(\infty;Y)$ and $N(\infty;S^3)$.
We identify $N(\infty;Y)$ with $N(\infty;S^3)$ under $\tau^{\infty}$.

Let $\breve C_{2n}(Y)=(Y\setminus \infty)^{2n}\setminus \Delta=\{\{1,\cdots,2n\}\hookrightarrow Y\setminus \infty\}$
and let $C_{2n}(Y)$ the compactification of $\breve C_{2n}(Y)$ given by Lescop \cite[\S 3]{Lescop}.
(This compactification is similar to Fulton-MacPherson compactification~\cite{FM}).
Roughly speaking, $C_{2n}(Y)$ is obtained from $Y^{2n}$ by real blowing up along all diagonals and 
$\{(x_1,\cdots,x_{2n}\mid \exists i~\mbox{such that}~x_i=\infty\}$. See \S 3 in \cite{Lescop} for the complete definition.)
Note that $C_2(Y)$ is given by real blowing up $Y^2$ along  
$(\infty,\infty)$, $\infty\times (Y\setminus\infty), (Y\setminus \infty)\times \infty$ and $\Delta$ in turn.
Let us denote by $q:C_2(Y)\to (Y\setminus \infty)^2$ the composition of the blow down maps.
Then $\partial C_2(Y)=ST_{\infty}Y\times (Y\setminus \infty)\cup (Y\setminus \infty)\times ST_{\infty}Y\cup S\nu_{\Delta(Y\setminus  \infty)}\cup q^{-1}(\infty^2)$.
We identify $S\nu_{\Delta(Y\setminus \infty)}$ with $STY|_{Y\setminus \infty}$ by the canonical isomorphism $S\nu_{\Delta Y}\cong STY$.
The involution $Y^2\to Y^2, (x,y)\mapsto (y,x)$ induces an involution of $C_2(Y)$.
We denote by $\iota:C_2(Y)\to C_2(Y)$ this involution.

Let
$p_1:(\partial C_2(Y)\supset) ST_{\infty}Y\times (Y\setminus \infty)\to ST_{\infty}Y\stackrel{\tau^{\infty}}{=}ST_{\infty}S^3=S^2$ and
$p_2:(\partial C_2(Y)\supset) (Y\setminus \infty)\times ST_{\infty}Y\to ST_{\infty}Y=ST_{\infty}S^3=S^2$
be the projections.
We denote by $\iota_{S^2}:S^2\to S^2$ the involution induced by $\times (-1):\RR^3\to \RR^3$.

Let $p_c:C_2(S^3)\to S^2$ be the extension of the map ${\rm int}C_2(S^3)=(\RR^3\times \RR^3)\setminus\Delta\to S^2$, $(x,y)\mapsto (y-x)/\|y-x\|$.
Since it is possible to identify $q^{-1}(N(\infty;Y)^2)\subset \partial C_2(Y)$ with $q^{-1}(N(\infty;S^3)^2)\subset \partial C_2(S^3)$ by $\tau^{\infty}$,
we get a map $\partial C_2(Y)\supset q^{-1}((N(\infty;Y)\setminus \infty)^2)\stackrel{p_c}{\to}S^2$.
Since $p_1, \iota_{S^2}\circ p_2$ and $p_c$ are compatible on boundary, 
these maps define the map $$p_Y:\partial C_2(Y)\setminus S\nu_{\Delta(Y\setminus N(\infty;Y))}\to S^2.$$
(Here we note that $\partial C_2(Y)\setminus S\nu_{\Delta(Y\setminus N(\infty;Y))}=ST_{\infty}Y\times (Y\setminus \infty)\cup (Y\setminus \infty)\times ST_{\infty}Y\cup q^{-1}(N(\infty;Y)^2)$.)
\subsection{More on the boundary $\partial C_{2n}(Y)$.}
The reference here is Lescop \cite[\S 2.2, \S 3]{Lescop}.
For $B\subset \{1,\cdots,2n\}$, we set
$$F(\infty;B)=q^{-1}(\{(x_1,\cdots, x_{2n})\mid x_i=\infty~\mbox{iff}~i\in B,~\mbox{if}~i,j\not\in B~\mbox{then}~ x_i\not=x_j  \}) ,$$
and for $B\subset \{1,\cdots,2n\}$($\sharp B\ge 2$), we set
$$F(B)=q^{-1}(\{(x_1,\cdots, x_{2n})\in(Y\setminus \infty)^{2n}\mid\exists y, x_i=y~\mbox{iff}~i\in B,~\mbox{if}~i,j\not\in B~\mbox{then}~ x_i\not=x_j \}).$$
Under these notations, $\partial C_{2n}(Y)=\bigcup_B F(\infty;B)\cup \bigcup_{\sharp B\ge 2}F(B)$.
We remark that $\partial C_{2n}(Y)$ has smooth structure (See \cite[\S~3]{Lescop}). 

Let $X$ be a 3-dimensional real vector space. Let $V$ be a finite set.
we define $\breve S_V(X)$ to be the set of injective maps from $V$ to $X$ up to translations and dilations.
Set $k=\{1,\cdots,k\}$. Note that $\breve S_2(X)=S(X)$.
For an $\RR^3$ vector bundle $E\to M$, we denote by $\breve S_V(E)\to M$ the fiber bundle where the fiber over $x\in M$ is $\breve S_V(E_x)$.
Under these notations, $F(2n)=\breve S_{2n}(T(Y\setminus \infty))$.

We remark that $F(B)$ has a fiber bundle structure where the typical fiber is $\breve S_B(\RR^3)$. 

Lescop gave a compactification $S_V(X), S_V(E)$ of $\breve S_V(X), \breve S_V(E)$ respectively in \cite{Lescop}.
Let $f(B)(X)=\breve S_B(X)\times \breve S_{\{b\}\cup B}(X)$, for $B\subset V$ with $B\not=V$ and  $\sharp B\ge 2$.
Let $f(B)(E)\to M$ be the fiber bundle where the fiber over $x\in M$ is $f(B)(E_x)$.
Under this notation, 
$$\partial S_V(X)=\bigcup_{\sharp B\ge 2}f(B)(X),  \partial S_V(E)=\bigcup_{\sharp B\ge 2}f(B)(E)$$ 
(See Proposition 2.8 in \cite{Lescop}).
We remark that $f(B)(E)$ has a fiber bundle structure where the typical fiber is $\breve S_B(\RR^3)$.  
\subsection{Jacobi diagrams.}
The reference here is Lescop \cite[\S 1.3, 2.3]{Lescop}.
A {\it Jacobi diagram} of degree $n$ is defined to be a trivalent graph with $2n$ vertices and $3n$ edges without simple loops.
For a Jacobi diagram $\overline{\Gamma}$, we denote by $H(\overline \Gamma), E(\overline \Gamma)$ and $V(\overline\Gamma)$ 
the set of half edges, the set of edges and the set of vertices respectively.
An {\it orientation} of a vertex of $\overline{\Gamma}$ is a cyclic order of the three half-edges that meet at the vertex.
A Jacobi diagram is {\it oriented} if all its vertices are oriented.
Let 
$$\mathcal A_n(\emptyset)=\{\mbox{degree}~n~\mbox{oriented Jacobi diagrams}\}\RR/{\rm AS,IHX},$$
where the relations AS and IHX are locally represented by the following pictures.
\begin{figure}[h]
\begin{center}
\includegraphics[width=280pt]{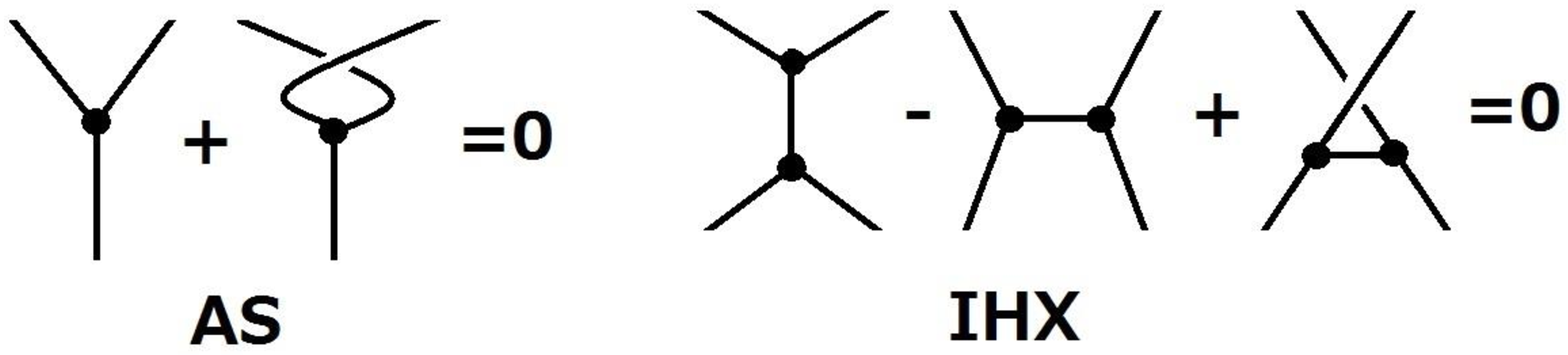} 
\end{center}
Here the orientation of each vertex is given by counterclockwise order of the half edges.
\end{figure}
Let
$$\mathcal E_n=\{\Gamma=(\overline{\Gamma},\varphi_E,\varphi_V,{\rm ori}_E)\}$$
Here $\overline{\Gamma}$ is a Jacobi diagram of degree $n$, $\varphi_E:E(\overline\Gamma)\cong \{1,2,\cdots,3n\}$ and $\varphi_V:V(\overline\Gamma)\cong \{1,2,\cdots,2n\}$ are labels of edges and vertices respectively, and ${\rm ori}_E$ is a collection of orientations of each edge.
These data and an orientation of $\overline{\Gamma}$ induce two orientations of $H(\overline \Gamma)$.
The first one is the edge-orientation induced by $\varphi_E$ and ${\rm ori}_E$.
The second one is the vertex-orientation induced by $\varphi_V$ and orientation of $\overline \Gamma$.
We choose the orientation of $\overline{\Gamma}$ so that the edge-orientation coincides with the vertex-orientation.
Let us denote by $[\Gamma]\in \mathcal A_n(\emptyset)$ the oriented Jacobi diagram given by $\Gamma$ in such a way.
\begin{remark}
The notation $\mathcal A_{2n,3n}$ used by Watanabe \cite{Watanabe} coincides with the notation $\mathcal A_n(\emptyset)$ used by Lescop \cite{Lescop}
as $\RR$-vector spaces.
\end{remark}
\section{Construction of an invariant of rational homology 3-sphere via vector fields.}\label{newinv}
Let $n$ be a natural number.
In this section, we define an invariant $\widetilde z_n$ using vector fields. 
The idea of construction of $\widetilde z_n$ is based on Kuperberg, Thurston \cite{KKT}, Lescop \cite{Lescop} and  
the construction of the anomaly part of Watanabe's Morse homotopy invariant~\cite{Watanabe}.
Let $Y$ be a rational homology 3-sphere with a base point $\infty$.
In Subsection~\ref{ad.form}, we will define the notion of admissible vector fields on $T(Y\setminus \infty)$.
In Subsections~\ref{mainterm},~\ref{anomaly term}, we will define $\widetilde z_n(Y;\vec\gamma)$ and $\widetilde z_n^{\rm anomaly}(\vec\gamma)$
using a family of admissible vector fields $\vec\gamma$.
Thus we obtain a topological invariant $\widetilde z_n(Y)=\widetilde z_n(Y;\vec\gamma)-\widetilde z_n^{\rm anomaly}(\vec\gamma)$ of $Y$ 
in Subsection~\ref{25}.
We will prove well-definedness of $\widetilde z_n$ in Subsection~\ref{s46}.
\subsection{Admissible vector fields on $T(Y\setminus \infty)$.}\label{ad.form}
For $a\in S^2\subset\RR^3$, the map $q_a:\RR^3\to \RR$ is defined by $q_a(x)=\langle x,a\rangle$ where $\langle,\rangle$ is the standard inner product on $\RR^3$.
Write $\pm a=\{a,-a\}$.
\begin{defini}
A vector field $\gamma\in \Gamma T(Y\setminus\infty)$ is an {\it admissible vector field (with respect to $a$)} if the following conditions hold.
\begin{itemize}
\item $\gamma|_{N(\infty;Y)\setminus \infty}=-\grad~q_a|_{N(\infty;S^3)\setminus \infty}$,
\item $\gamma$ is transverse to the zero section in $T(Y\setminus \infty)$.
\end{itemize}
\end{defini}
\begin{example}
We give two important examples of admissible vector fields with respect to $a$.
\begin{itemize}
\item[{\rm (1)}] Let $\tau_{\RR^3}:T\RR^3\cong \underline{\RR^3}$ be the standard framing of $T\RR^3$.
We regard $a\in \RR^3$ as a constant section of the trivial bundle $\underline{\RR^3}$.
For a framing $\tau:T(Y\setminus \infty)\cong \underline{\RR^3}$ such that $\tau|_{N(\infty;Y)\setminus\infty}=\tau_{\RR^3}|_{N(\infty;S^3)\setminus \infty}$, the pull-back vector field $\tau^*a$ is an admissible vector field with respect to $-a$.
\item[{\rm (2)}] For a Morse function $f:Y\setminus\infty\to\RR$ such that $f|_{N(\infty;Y)\setminus\infty}=q_a|_{N(\infty;S)\setminus \infty}$
, $\grad f$ is an admissible vector field with respect to $a$.
\end{itemize}
\end{example}
The following lemma plays an important role in the next subsection.
For an admissible vector field $\gamma$, 
let 
$$\overline c_{\gamma}=\overline{\left\{\frac{\gamma(x)}{\|\gamma(x)\|}\in ST_xY~\Big|~ x\in Y\setminus(\infty\cup \gamma^{-1}(0))\right\}}^{\rm closure}\subset ST(Y\setminus\infty).$$
Here we choose the orientation of $\overline c_{\gamma}$ such that the restriction of the projection $STY\to Y$ to $\overline c_{\gamma}$ is 
orientation preserving. 
\begin{lemma}\label{keylemma}
$$c_0(\gamma)=\overline c_{\gamma}\cup \overline c_{-\gamma}$$
is a submanifold of $ST(Y\setminus \infty)$ without boundary.
\end{lemma}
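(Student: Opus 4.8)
The plan is to analyze $c_0(\gamma)$ separately away from the zero set $\gamma^{-1}(0)$ and in a neighborhood of each zero. First I would record that $\gamma^{-1}(0)$ is a finite subset of $Y\setminus N(\infty;Y)$: on $N(\infty;Y)\setminus\infty$ we have $\gamma=-\grad~q_a=-a$, which is nonvanishing, so all zeros lie in the compact set $Y\setminus N(\infty;Y)$; and transversality to the zero section forces $\gamma^{-1}(0)$ to be a $0$-dimensional submanifold, hence discrete, hence finite. On $Y\setminus(\infty\cup\gamma^{-1}(0))$ the maps $x\mapsto\widehat\gamma(x):=\gamma(x)/\|\gamma(x)\|$ and $x\mapsto-\widehat\gamma(x)$ are smooth sections of $ST(Y\setminus\infty)$, their graphs are disjoint embedded $3$-submanifolds there (disjoint because $\widehat\gamma(x)\neq-\widehat\gamma(x)$), and these graphs are already closed in $ST\big((Y\setminus\infty)\setminus\gamma^{-1}(0)\big)$, so near such points $c_0(\gamma)$ is a boundaryless submanifold. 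A direct inspection of the definitions also shows $\overline c_{\gamma}\cap\overline c_{-\gamma}=\bigsqcup_{p\in\gamma^{-1}(0)}\{p\}\times S(T_pY)$, since over a zero $p$ the closure of either graph is the whole fibre sphere (the sphericalization $S(D_p\gamma)$ of the linearization is surjective), while over a non-zero it is the single point of the graph.

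The heart of the argument is a local model near a zero $p$. Choose a chart identifying a neighborhood of $p$ with a ball about $0$ in $T_pY$ and trivializing $T(Y\setminus\infty)$ over it, and write $L=D_p\gamma$, which is an isomorphism by transversality, so that $S(L):S(T_pY)\to S(T_pY)$ is a diffeomorphism. By Taylor's theorem together with Hadamard's lemma, $\gamma(ru)/r$ extends to a \emph{smooth} map on $(-\epsilon,\epsilon)\times S(T_pY)$ equal to $L(u)$ at $r=0$, hence nonvanishing for $|r|$ small; let $G(r,u)$ be its normalization, a smooth map with $G(0,\cdot)=S(L)$. The sign bookkeeping $G(r,u)=\mathrm{sgn}(r)\,\widehat\gamma(ru)$ (for $r\neq0$, writing $x=ru$: $G(r,u)=\widehat\gamma(x)$ if $r>0$ and $G(r,u)=-\widehat\gamma(x)$ if $r<0$) is exactly what lets one chart straddle both sheets. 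Since $G(0,\cdot)$ is a diffeomorphism, the implicit function theorem (using compactness of $S(T_pY)$) solves $G(r,u)=v$ by a smooth map $u=u(r,v)$ on $(-\epsilon',\epsilon')\times S(T_pY)$, with $u(0,\cdot)=S(L)^{-1}$. Define $\Psi(t,v)=\big(t\,u(t,v),\,v\big)\in ST(Y\setminus\infty)$. I would then check: $\Psi$ is smooth; its image is exactly $c_0(\gamma)$ near $p$ (for $t>0$ the point $x=tu(t,v)$ satisfies $v=\widehat\gamma(x)$, for $t<0$ it satisfies $v=-\widehat\gamma(x)$, for $t=0$ we get $\{p\}\times S(T_pY)$, and uniqueness in the implicit function theorem shows conversely that every point of $\overline c_{\pm\gamma}$ near $p$ is hit); $\Psi$ is injective (comparing second coordinates gives $v=v'$, then lengths give $|t|=|t'|$, and $u(0,\cdot)$ being unit-valued rules out $t'=-t$ for small $t\neq0$); and $\Psi$ is an immersion — the only delicate point is at $t=0$, where $\partial_t\Psi|_{t=0}=(u(0,v),0)\neq0$ has vanishing $S(T_pY)$-component, while the $\partial_v$-directions have $S(T_pY)$-components spanning $T_vS(T_pY)$ because $u(0,\cdot)=S(L)^{-1}$ is a diffeomorphism. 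Being a continuous injective immersion which is proper on each compact $[-\delta,\delta]\times S(T_pY)$, $\Psi$ is an embedding, so $c_0(\gamma)$ is a $3$-dimensional submanifold without boundary near $p$.

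Patching the two descriptions yields the lemma: $c_0(\gamma)=\overline c_{\gamma}\cup\overline c_{-\gamma}$ is a closed $3$-dimensional submanifold of $ST(Y\setminus\infty)$ with empty boundary (near $\infty$ it is just the constant section coming from $\gamma=-a$, so there is nothing extra to check there). The main obstacle, and the only place real care is needed, is the last local step: producing a single smooth chart $\Psi$ that crosses the exceptional sphere $\{p\}\times S(T_pY)$ and covers $\overline c_{\gamma}$ on the $t>0$ side and $\overline c_{-\gamma}$ on the $t<0$ side simultaneously, and verifying that it is a smooth embedding. What makes this work is precisely that transversality makes $D_p\gamma$ invertible (so $S(D_p\gamma)$ is a diffeomorphism, which both fills the exceptional fibre and drives the implicit function theorem), together with the smoothness of $\gamma(ru)/r$ down to $r=0$ for both signs of $r$; for a degenerate zero $c_0(\gamma)$ would in general be singular over $p$.
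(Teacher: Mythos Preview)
Your proof is correct and complete. The approach, however, differs from the paper's in a genuine way.

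The paper first invokes a straightening lemma (a consequence of the implicit function theorem): after a local diffeomorphism of the base, $\gamma$ becomes the identity map $x\mapsto x$ in coordinates. In this model $c_0(\gamma)$ is literally $\bigcup_{v\in S^2}\RR v\times\{v\}\subset \RR^3\times S^2$, and the paper then shows this explicit set is a submanifold by an ad hoc construction due to Lescop: for each $v_0$ one uses the rotation $\rho(v,w_0)$ of angle $\pi$ about the midpoint of the geodesic from $v$ to a fixed $w_0\perp v_0$, which maps $\RR v$ to $\RR w_0$ and thereby straightens the family of lines to a fixed line. The paper also argues separately that $\partial\overline c_{\gamma}=-\partial\overline c_{-\gamma}$ as oriented manifolds.

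You work instead with the original $\gamma$ and never reduce to a canonical model. Your key device is Hadamard's lemma, which makes $(r,u)\mapsto \gamma(ru)/r$ smooth across $r=0$ with value $L(u)=D_p\gamma(u)$; normalizing and applying the implicit function theorem in the $u$-variable yields a single smooth chart $\Psi(t,v)=(t\,u(t,v),v)$ whose two halves $t>0$ and $t<0$ parametrize $\overline c_{\gamma}$ and $\overline c_{-\gamma}$ respectively. Specialized to the paper's straightened model your chart becomes simply $(t,v)\mapsto(tv,v)$, which is an alternative (and arguably more transparent) way to see that $\bigcup_v\RR v\times\{v\}$ is a submanifold, without the rotation trick. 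Your approach is more systematic and generalizes immediately to other dimensions or to parametrized families; the paper's approach has the virtue of giving a completely explicit ambient diffeomorphism flattening $c_0(\gamma)$, which can be useful for later computations with orientations and intersections.
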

To prove this lemma, we first remark the following lemma.
Let $n,k\ge 0$ be integral numbers.
Let $s:(\RR^{n+k},0)\to (\RR^{n},0)$ be a $\CC$ map which is transverse to the origin $0\in\RR^{n}$.
\begin{lemma}\label{lemmaA}
There is a diffeomorphism $\varphi:(\RR^{n+k},0)\to (\RR^{n+k},0)$
such that $s\circ \varphi$ coincides with $p_{\RR^n}$ as germs at $0\in \RR^{n+k}$.
Here $p_{\RR^n}:\RR^{n+k}=\RR^n\times \RR^k\to \RR^n$ is the orthogonal projection. 
\end{lemma}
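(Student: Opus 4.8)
The plan is to reduce the statement to the classical fact that a submersion is locally a projection. First I would note that transversality of $s$ to $0\in\RR^n$ at the origin means exactly that the differential $D s_0 : \RR^{n+k}\to \RR^n$ is surjective, i.e. $s$ is a submersion near $0$ (since being a submersion is an open condition, transversality at the single point $0$ — which lies in the zero-dimensional submanifold $\{0\}$ — forces surjectivity of $Ds$ on a whole neighborhood of $0$). So the content is really the local normal form for submersions, and I would prove it by building $\varphi$ explicitly.

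The key steps, in order: (1) After a linear change of coordinates on the source $\RR^{n+k}$, arrange that $Ds_0$ restricted to the first factor $\RR^n\times\{0\}$ is the identity $\RR^n\to\RR^n$; this is possible because $Ds_0$ is surjective, so we may pick $n$ of the coordinate directions mapping isomorphically onto $\RR^n$ and relabel. (2) Define $\Psi:(\RR^{n+k},0)\to(\RR^{n+k},0)$ by $\Psi(x,y) = (s(x,y), y)$ for $(x,y)\in\RR^n\times\RR^k$. Its differential at $0$ is, in block form, $\begin{pmatrix} \mathrm{id}_{\RR^n} & * \\ 0 & \mathrm{id}_{\RR^k}\end{pmatrix}$, which is invertible, so by the inverse function theorem $\Psi$ is a diffeomorphism between neighborhoods of the origin. (3) Set $\varphi = \Psi^{-1}$. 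Then $s\circ\varphi$ and $p_{\RR^n}\circ\Psi\circ\varphi = p_{\RR^n}$ agree, since $p_{\RR^n}\circ\Psi = s$ by construction; hence $s\circ\varphi = p_{\RR^n}$ as germs at $0$. One then shrinks the domain if needed so that $\varphi$ is defined on (a representative of the germ at) $0$ with values near $0$.

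I do not expect any serious obstacle here; the only point requiring a little care is the bookkeeping in step (1) — one must be precise that the linear coordinate change on the source is itself a diffeomorphism fixing $0$, so it can be absorbed into $\varphi$, and that the resulting composite still satisfies $\varphi(0)=0$. A secondary subtlety worth a sentence is that the lemma is stated for germs, so all maps need only be defined on unspecified neighborhoods of $0$, and the inverse function theorem delivers exactly such a neighborhood; there is no global statement to worry about. This normal-form lemma will then be applied in the proof of Lemma~\ref{keylemma} with $s$ taken to be (a local model of) the admissible vector field $\gamma$ near its zero set, so that $\gamma^{-1}(0)$ becomes the coordinate subspace $\{0\}\times\RR^k$ and the blow-up behaviour of $\overline c_\gamma$ along $\gamma^{-1}(0)$ can be computed in the standard coordinates.
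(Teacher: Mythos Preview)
Your proof is correct and is essentially the same approach as the paper, which simply states that the lemma is a consequence of the implicit function theorem; you have just spelled out the standard inverse-function-theorem argument that underlies that one-line citation.
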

\begin{proof}
This is a consequence of the implicit function theorem.
\end{proof}
\begin{proof}[Proof of Lemma~\ref{keylemma}]
It is sufficient to check this claim near $\gamma^{-1}(0)$.
Let $x\in \gamma^{-1}(0)$.
We fix a trivialization $\psi:T(Y\setminus \infty)|_{U_0}\cong U_0\times \RR^3$ on  a neighborhood $U_0$ of $x$ in $Y$.
By the above Lemma~\ref{lemmaA}, there is a neighborhood $U\subset U_0$ of $x$ and local coordinates $\varphi:\RR^3\cong U$(which is independent of $\psi$) such that $(\varphi^{-1}\times {\rm id})\circ\psi\circ\gamma\circ\varphi:\RR^3\to \RR^3\times \RR^3$ is represented by $(\varphi^{-1}\times {\rm id})\circ\psi\circ\gamma\circ\varphi(x)=(x,x)$.
We fix these local trivialization and coordinates and we write $\gamma$ instead of $(\varphi^{-1}\times {\rm id})\circ\psi\circ\gamma\circ\varphi$.

We first show that $\partial\overline c_{\gamma}\cap STU=-(\partial\overline c_{-\gamma}\cap STU)$ as oriented manifolds.  
Let $D_+=\overline c_{\gamma}\cap STU$ and $D_-=\overline c_{-\gamma}\cap STU$. 
Under the above local coordinates, $D_+=\{(tx,x/\|x\|)\mid x\in S^2, t\in [0,\infty)\}\subset (S^2\times [0,\infty)/(S^2\times 0))\times S^2=\RR^3\times S^2$ and
$D_-=\{(tx,-x/\|x\|)\mid x\in S^2, t\in [0,\infty)\}$.
Both projection $\pi:D_{+}\to \RR^3$ and the projection $\pi:D_{-}\to\RR^3$
are orientation preserving (or reversing). 
Let $g:\RR^3\times S^2\to \RR^3\times S^2$ be the bundle map defined by $(x,v)\mapsto (x,-v)$.
So $g:\partial D_+\cong \partial D_-$ is orientation preserving.
On the other hand, $g|_{\{0\}\times S^2}:\{0\}\times S^2\to \{0\}\times S^2$ is orientation reversing.
Hence, the identity map ${\rm id}:\{0\}\times S^2\to \{0\}\times S^2$ is orientation reversing map as a map between $\partial D_+$ and $\partial D_-$.
Therefore $\partial \overline c_{\gamma}=\partial D_+=-\partial D_-=-\partial \overline c_{-\gamma}$.

We next prove that $c_0(\gamma)\cap STU$ is a submanifold of $STU\cong \RR^3\times S^2$.
Let $p_2:\RR^3\times S^2\to S^2$ be the projection.
For each $v\in S^2$, we have $(p_2|_{c_0(\gamma)})^{-1}(v)=\RR v\times \{v\}\subset \RR^3\times S^2$.
The set $\bigcup_{v\in S^2}\RR v\times \{v\}$ is a submanifold of $\RR^3\times S^2$.
In fact, for any $v_0\in S^2$ and for any sufficiently small neighborhood $B_{v_0}\subset S^2$ of $v_0$ we can take a diffeomorphism 
$$\Phi_{v_0}:(\RR^3\times B_{v_0},\bigcup_{v\in B_{v_0}}\RR v\times \{v\})\stackrel{\cong}{\to}(\RR^3\times B_{v_0}, \RR w_0\times B_{v_0})$$
as follow.\footnote{The author is indebted to Professor Christine Lescop for this construction.}
Here $w_0\in S^2\subset\RR^3$ is a point orthogonal to $v_0$ in $\RR^3$ and $\RR w_0$ is the 1-dimensional vector subspace of $\RR^3$ spanned by $w_0$.
For each $v\in B_{v_0}$, let $m(v,w_0)\in S^2$ be the middle point of the geodesic segment from $v$ to $w_0$.
Let $\rho(v,w_0)\in SO(3)$ be the rotation with axis directed by $m(v,w_0)$ and with angle $\pi$.
So $\rho(v,w_0)$ exchanges $v$ and $w_0$.
Then we can define $\Phi_{v_0}:\RR^3\times B_{v_0}\to \RR^3\times B_{v_0}$ by $\Phi_{v_0}(x,v)=(\rho(v,w_0)(x),v)$
for each $(x,v)\in \RR^3\times B_{v_0}$.
 
Therefore $c_0(\gamma)\cap (\RR^3\times S^2)=\bigcup_{v\in S^2}\RR v\times\{v\}$ is a submanifold of $\RR^3\times S^2$.
\end{proof}
\subsection{The principal term $\widetilde z(Y;\vec\gamma)$.}\label{mainterm}
In this subsection, we define the principal term $\widetilde z(Y;\vec\gamma)$ of the invariant $\widetilde z(Y)$.
We define $$c(\gamma)=p_Y^{-1}(\pm a)\cup c_0(\gamma)\subset \partial C_2(Y).$$ 
By the definition of $\gamma$ and Lemma~\ref{keylemma},
$c(\gamma)$ is a closed 3-manifold.
Therefore $[c(\gamma)]\in H_3(\partial C_2(Y);\mathbb R)$.

Let $\omega_{S^2}^a$ be an anti-symmetric closed 2-form on $S^2$ such that
$\omega_{S^2}^a$ represents the Poincar\'e dual of $[\pm a]$ and the support of $\omega_{S^2}^a$ is concentrated in near $\pm a$.
Let $\omega_{\partial}(\gamma)$ be a closed 2-form on $\partial C_2(Y)$ satisfying the following conditions.
\begin{itemize}
\item $2\omega_{\partial}(\gamma)$ represents the Poincar\'e dual of $[c(\gamma)]$, 
\item The support of $\omega_{\partial}(\gamma)$ is concentrated in near $c(\gamma)$,
\item $\iota^*\omega_{\partial}(\gamma)=-\omega_{\partial}(\gamma)$ and
\item $\omega_{\partial}(\gamma)|_{\partial C_2(Y)\setminus S\nu_{\Delta(Y\setminus N(\infty;Y))}}=\frac{1}{2}p_Y^*\omega_{S^2}^a$.
\end{itemize}
Since $Y$ is a rational homology 3-sphere, the restriction $H^2(C_2(Y);\RR)\to H^2(\partial C_2(Y);\RR)$ is an isomorphism.
Thus there is a closed 2-form $\omega(\gamma)$ on $C_2(Y)$ satisfying the following conditions.
\begin{itemize}
\item $\omega(\gamma)|_{\partial C_2(Y)}=\omega_{\partial}(\gamma)$ and 
\item $\iota^*\omega(\gamma)=-\omega(\gamma)$.
\end{itemize}
\begin{defini}[propagator]
We call $\omega(\gamma)$ a {\it propagator with respect to} $\gamma$.
\end{defini}
Take $a_1,\cdots ,a_{3n}\in S^2$ (we may take, for example, $a_1=\cdots=a_{3n}$).
Let $\gamma_i$ be an admissible vector field with respect to $a_i$ and let 
$\omega(\gamma_i)$ be a propagator with respect to $\gamma_i$ for each $i\in \{1,\cdots,3n\}$.
To simplify notation, we write $\vec\gamma$ instead of $(\gamma_1,\cdots, \gamma_{3n})$. 

For each $\Gamma=(\overline \Gamma, \varphi_E,\varphi_V,{\rm ori}_E)\in \mathcal E_n$ 
and for each $\varphi_E^{-1}(i)\in E(\overline \Gamma)$, let $s(i),t(i)\in \{1,\cdots,2n\}$ denote the labels of the initial vertex and the terminal 
vertex of $\varphi_E^{-1}(i)$ respectively.
The embedding $\{1,2\}\cong\{s(i),t(i)\}\hookrightarrow \{1,\cdots, 2n\}$ induces
the projection $\pi_{\breve C_{2n}(Y)}:\breve C_{2n}(Y)\to\breve C_2(Y)$.
Furthermore it is possible to extend $\pi_{\breve C_{2n}(Y)}$ to $C_{2n}(Y)$ by the definition of $C_{2n}(Y)$. We denote by $P_i(\Gamma):C_{2n}(Y)\to C_2(Y)$ such the extended map 
(see \cite{Lescop}~\S 2.3 for more detail).
\begin{defini}
$$\widetilde z_n(Y;\vec\gamma)=\sum_{\Gamma\in\mathcal E_n}\left(\int_{C_{2n}(Y)}\bigwedge_{i}P_i(\Gamma)^*\omega(\gamma_i)\right)[\Gamma]\in \mathcal A_n(\emptyset).$$
\end{defini}
\begin{remark}
By the above definition, the value $\widetilde z_n(Y;\vec\gamma)$ often depends on the choices of $\omega(\gamma_i)$ even if we fix $\vec\gamma$.
We will prove in Subsection~\ref{s46} that $\widetilde z_n(Y;\vec\gamma)$, however, depends only on the choice of $\vec \gamma$ for generic $\vec\gamma$.  
\end{remark}
\subsection{Alternative description of $\widetilde z_n(Y;\vec\gamma)$.}
In this subsection, we give an alternative description of $\widetilde z_n(Y;\vec\gamma)$ using cohomologies of simplicial complexes with coefficients in $\RR$.
This description will be needed in Section~\ref{proof}.  
The admissible vector field $\gamma_i$ with respect to $a_i$ and the 3-cycle $c(\gamma_i)\subset \partial C_2(Y)$ are as above.
Let $T_{C_2(Y)}$ be the simplicial decomposition of $C_2(Y)$ given by pulling back a simplicial decomposition of $C_2(Y)/\iota$.
So the simplicial decomposition $T_{C_2(Y)}$ is compatible with the action of $\iota$.
By replacing such a simplicial decomposition if necessary, we may assume that each simplex of $T_{C_2(Y)}$ is transverse to $c(\gamma_i)$.
Let $\omega_{\partial}^s(\gamma_i)\in S^2(\partial C_2(Y))$ be the 2-cocycle defined by
$\omega_{\partial}^s(\gamma_i)(\sigma)=\frac{1}{2}\sharp (\sigma\cap c(\gamma_i))$ for each 2-cycle $\sigma$ in $T_{C_2(Y)}|_{\partial C_2(Y)}$.
Thus $\omega_{\partial}^s(\gamma_i)$ is anti-symmetric under the involution $\iota$.
Let $\omega^s(\gamma_i)$ be an extension of $\omega_{\partial}^s(\gamma_i)$ to $C_2(Y)=|T_{C_2(Y)}|$ satisfying the following conditions. 
\begin{itemize}
\item $\omega^s(\gamma_i)|_{\partial C_2(Y)}=\omega^s_{\partial}(\gamma_i)$ and
\item $\iota^*\omega^s(\gamma_i)=-\omega^s(\gamma_i)$.
\end{itemize}
We call it a {\it simplicial propagator}.
Take an appropriate simplicial decomposition of $C_{2n}(Y)$.
Then we have the 2-cocycle $P_i(\Gamma)^*\omega^s(\gamma_i)\in S^{2n}(C_{2n}(Y))$.
By the construction, $\bigwedge_i  P_i(\Gamma)^*\omega^s(\gamma_i)$ is a cocycle in $(C_{2n}(Y),\partial C_{2n}(Y))$.
If necessary we replace the simplicial decompositions with a smaller one, we have the following lemma via the intersection theory.
\begin{lemma}[Alternative description of $\widetilde z_n(Y;\gamma)$]\label{alt}
If $(\bigcap_i P_i(\Gamma)^{-1}{\rm support}(\omega^s(\gamma_i)))\\
\cap\partial C_{2n}(Y)=\emptyset$ for any $\Gamma$,
$$\widetilde z_n(Y;\vec\gamma)=\sum_{\Gamma\in\mathcal E_n}\langle\bigwedge_{i}P_i(\Gamma)^*\omega^s(\gamma_i), [C_{2n}(Y),\partial C_{2n}(Y)]\rangle [\Gamma]
\in\mathcal A_n(\emptyset).$$
Here $ [C_{2n}(Y),\partial C_{2n}(Y)]$ denotes the fundamental homology class and $\langle,\rangle$ denotes the Kronecker product.
\end{lemma}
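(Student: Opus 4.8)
The plan is to pass from the de Rham integral defining $\widetilde z_n(Y;\vec\gamma)$ to a simplicial intersection count by a standard comparison argument, using that both the differential-form propagators $\omega(\gamma_i)$ and the simplicial propagators $\omega^s(\gamma_i)$ represent (twice) the Poincaré dual of the same relative class and satisfy the same antisymmetry under $\iota$. First I would recall that, by the last two bullet conditions in Subsection~\ref{mainterm}, $\omega(\gamma_i)$ is a closed $2$-form on $C_2(Y)$ whose restriction to $\partial C_2(Y)$ is a de Rham representative of the cohomology class that is Poincaré–Lefschetz dual to $\tfrac12[c(\gamma_i)]$; likewise $\omega^s_\partial(\gamma_i)$, being the intersection cocycle $\sigma\mapsto \tfrac12\,\sharp(\sigma\cap c(\gamma_i))$, represents that same class in $H^2(\partial C_2(Y);\RR)$, and since $H^2(C_2(Y);\RR)\to H^2(\partial C_2(Y);\RR)$ is an isomorphism (again because $Y$ is a $\QQ HS^3$), the extension $\omega^s(\gamma_i)$ represents the same class in $H^2(C_2(Y);\RR)$ as $\omega(\gamma_i)$. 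So for each edge label $i$ the de Rham class $[\omega(\gamma_i)]$ and the simplicial class $[\omega^s(\gamma_i)]$ correspond under the canonical isomorphism between de Rham and simplicial (real) cohomology of $C_2(Y)$.

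Next I would pull everything back along the edge maps $P_i(\Gamma):C_{2n}(Y)\to C_2(Y)$. Because $P_i(\Gamma)$ is a smooth map of manifolds with corners, the classes $[P_i(\Gamma)^*\omega(\gamma_i)]$ and $[P_i(\Gamma)^*\omega^s(\gamma_i)]$ still agree in $H^2(C_{2n}(Y);\RR)$ after refining the simplicial decomposition of $C_{2n}(Y)$ so that each $P_i(\Gamma)$ is simplicial (or transverse enough for the intersection cocycle to make sense); this is the role of the phrase ``if necessary we replace the simplicial decompositions with a smaller one.'' Taking the wedge over the $3n$ edges of $\overline\Gamma$ gives a top-degree class on $C_{2n}(Y)$, and the hypothesis
$$\bigl(\bigcap_i P_i(\Gamma)^{-1}\,{\rm support}(\omega^s(\gamma_i))\bigr)\cap \partial C_{2n}(Y)=\emptyset$$
guarantees that the simplicial cocycle $\bigwedge_i P_i(\Gamma)^*\omega^s(\gamma_i)$ vanishes on a neighbourhood of $\partial C_{2n}(Y)$, so it is genuinely a relative cocycle in $S^{6n}(C_{2n}(Y),\partial C_{2n}(Y))$ and represents the class of $\bigwedge_i P_i(\Gamma)^*\omega(\gamma_i)$ in $H^{6n}(C_{2n}(Y),\partial C_{2n}(Y);\RR)$ (the de Rham side is automatically relative since $\int$ over the closed manifold-with-corners $C_{2n}(Y)$ only depends on the relative class). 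Pairing with the fundamental class $[C_{2n}(Y),\partial C_{2n}(Y)]$ then equates $\int_{C_{2n}(Y)}\bigwedge_i P_i(\Gamma)^*\omega(\gamma_i)$ with $\langle\bigwedge_i P_i(\Gamma)^*\omega^s(\gamma_i),[C_{2n}(Y),\partial C_{2n}(Y)]\rangle$, and summing over $\Gamma\in\mathcal E_n$ with the Jacobi-diagram weights $[\Gamma]$ gives the claimed identity in $\mathcal A_n(\emptyset)$.

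The main obstacle, and the only genuinely delicate point, is the boundary bookkeeping: one must ensure that the comparison of de Rham and simplicial representatives can be carried out through the wedge product and the map $P_i(\Gamma)$ while keeping everything relative to $\partial C_{2n}(Y)$, which is a stratified space of corners rather than a smooth boundary. I would handle this by first choosing the $\iota$-equivariant simplicial decomposition $T_{C_2(Y)}$ fine enough that $\omega^s_\partial(\gamma_i)$ is supported near $c(\gamma_i)$ and $\omega(\gamma_i)$ is supported near $c(\gamma_i)$ as well, then refining the decomposition of $C_{2n}(Y)$ compatibly with all the maps $P_i(\Gamma)$ for all $\Gamma$ simultaneously, invoking the standard fact (e.g. via the Čech–de Rham / Thom-class comparison) that on a compact manifold with corners a closed form and an intersection cocycle Poincaré-dual to the same submanifold differ by $d\eta + (\text{simplicial coboundary})$ with $\eta$ supported away from the boundary when the geometric supports are. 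The antisymmetry conditions $\iota^*\omega(\gamma_i)=-\omega(\gamma_i)$ and $\iota^*\omega^s(\gamma_i)=-\omega^s(\gamma_i)$ are preserved throughout and play no role beyond ensuring the propagators are of the required type. Once the relative classes are matched edge-by-edge, the rest is the formal computation above.
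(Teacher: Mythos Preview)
Your outline matches the paper's approach: the paper itself gives no proof beyond the phrase ``via the intersection theory'' and the remark about refining the triangulation, and what you have written is the natural way to unpack that phrase. The comparison of the de Rham and simplicial propagators through the isomorphism $H^2(C_2(Y);\RR)\cong H^2(\partial C_2(Y);\RR)$ is exactly the right mechanism.

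One point needs correcting. Your parenthetical ``the de Rham side is automatically relative since $\int$ over the closed manifold-with-corners $C_{2n}(Y)$ only depends on the relative class'' is false as stated: integrating a closed top-degree form over a compact manifold with boundary is \emph{not} an invariant of its absolute cohomology class, and it does not automatically define a pairing with the relative fundamental class. What makes $\bigwedge_i P_i(\Gamma)^*\omega(\gamma_i)$ a relative cocycle is the same support condition you are assuming for the simplicial side: the $\omega(\gamma_i)$ are constructed with support concentrated near $c(\gamma_i)$, and one must use (or arrange) that $\bigcap_i P_i(\Gamma)^{-1}\mathrm{supp}\,\omega(\gamma_i)$ also misses $\partial C_{2n}(Y)$. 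You do say this correctly later when you discuss choosing both supports near $c(\gamma_i)$, so the argument is salvageable; but the parenthetical should be deleted and replaced by an explicit appeal to the de Rham support hypothesis, parallel to the simplicial one. With that fix, the Stokes/coboundary replacement of $\omega(\gamma_i)$ by $\omega^s(\gamma_i)$ one edge at a time produces boundary terms whose supports are empty on $\partial C_{2n}(Y)$, and the comparison goes through term-by-term in $\Gamma$ as you claim.
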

\subsection{The anomaly term $\widetilde z_n^{\rm anomaly}(\vec\gamma)$.}\label{anomaly term}
In this subsection, we define the anomaly term $\widetilde z_n^{\rm anomaly}(Y;\vec\gamma)$ of the invariant $\widetilde z_n(Y)$.
The idea of the construction of this anomaly term is based on the construction of the anomaly term of Watanabe's invariant~\cite{Watanabe}.
Let $Y$, $\infty$, 
$a_1,\cdots,a_{3n}\in S^2$,
$\gamma_1,\cdots,\gamma_{3n}$ (admissible vector fields with respect to $a_1,\cdots,a_{3n}$ respectively) and
$\omega(\gamma_1),\cdots,\omega(\gamma_{3n})$
be the same as above.
Let $X$ be a connected oriented 4-manifold with $\partial X=Y$ and $\chi (X)=0$.
For example, we can take $X=(T^4\sharp \mathbb CP^2)\setminus B^4$ when $Y=S^3$.
For a framing $\tau'$ of $TY$ or $\underline{\RR}\oplus TY$, we denote by $\sigma_Y(\tau')\in \ZZ$ the signature defect of $\tau'$. 
Let $\tau_{S^3}$ be a framing\footnote{There is such a framing. For example, the Lie framing $\tau_{SU(2)}$ of $S^3=SU(2)$ satisfies 
$\sigma_{S^3}(\tau_{SU(2)})=2$. See R.~Kirby and P.~Melvin~\cite{KM} for more details. We can get $\tau_{S^3}$ by modifying $\tau_{SU(2)}$.} of $TS^3$ satisfying the following two conditions: 
\begin{itemize}
\item $\sigma_{S^3}(\tau_{S^3})=2$,
\item $\tau_{S^3}|_{S^3\setminus N'(\infty;S^3)}=\tau_{\RR^3}|_{S^3\setminus N'(\infty;S^3)}$.
\end{itemize}
Here $N'(\infty;S^3)$ is a neighborhood of $\infty$ smaller than $N(\infty;S^3)$, i.e., $\infty\in N'(\infty;S^3)\subset N(\infty;S^3)$.
\begin{remark}
There is no special meaning in the number "$2$" in the condition $\sigma_{S^3}(\tau_{S^3})=2$.
The anomaly term $\widetilde z^{\rm anomaly}_n(\vec\gamma)$ is independent of the choice 
of $\tau_{S^3}$ even if $\sigma_{S^3}(\tau_{S^3})$ not be $2$. 
We remark that there is no framing $\tau$ on $S^3$ such that $\sigma_{S^3}(\tau)=0$.
\end{remark}
Let $\eta_Y$ be the outward unit vector field of $TY=T(\partial X)\subset TX|_Y$ in $TX$.
Since $\chi (X)=0$, it is possible to extend $\eta_Y$ to a unit vector field of $TX$.
We denote by $\eta_X\in \Gamma TX$ such an extended vector field.
Let $T^vX$ be the normal bundle of $\eta_X$. We remark that $T^vX|_Y=TY$.

The vector field $\tau_{S^3}^*a_i$ of $TY|_{N(\infty;Y)}$ is the pull-back of $a_i\in S^2\subset \RR^3$ along
$\tau_{S^3}|_{N(\infty;Y)}$
\footnote{We sometimes regard a framing as a bundle map to the trivial bundle over a point.}.
Since $\gamma_i|_{Y\setminus N(\infty;Y)}\in \Gamma T(Y\setminus N(\infty;Y))$ and $\tau_{S^3}^*a_i|_{N(\infty;Y)}\in \Gamma TY|_{N(\infty;Y)}$
are compatible, these vector fields define the vector field $\gamma'_i\in \Gamma TY$.
Let $\beta_i\in \Gamma T^vX$ be a vector field of $T^vX$ transverse to the zero section in $T^vX$ satisfying $\beta_i|_{Y}=\gamma'_i$.
By a similar argument of Lemma~\ref{keylemma}, 
$$c_0(\beta_i)=\overline{\left\{\frac{\beta(x)}{\|\beta(x)\|}, \frac{-\beta(x)}{\|\beta(x)\|}\in S(T^vX)_x~\Big|~ x\in X\setminus \beta^{-1}(0)\right\}}^{\rm closure}\subset ST^vX$$
is a submanifold of $ST^vX$ satisfying $\partial c_0(\beta_i)\subset STY$.
Hence $c_0(\beta_i)$ is a cycle of $(ST^vX,\partial ST^vX)$.
Here we choose the orientation of $c_0(\beta_i)$ such that the restriction of the projection $ST^vX\to X$ to $c_0(\beta_i)$ is orientation preserving.

We note that $c_0(\beta_i)$ satisfies $c_0(\beta_i)\cap S\nu_{\Delta(Y\setminus N(\infty;Y))}=c_0(\gamma_i)$.
Let $W(\gamma_i)$ be a closed 2-form on $ST^vX$ satisfying the following conditions.
\begin{itemize}
\item $2W(\gamma_i)$ represents the Poincar\'e dual of $[c_0(\beta_i),\partial c_0(\beta_i)]$, 
\item The support of $W(\gamma_i)$ is concentrated in near $c_0(\beta_i)$,
\item $W(\gamma_i)|_{ST(Y\setminus N(\infty;Y))}=\omega_{\partial}(\gamma_i)|_{S\nu_{\Delta(Y\setminus N(\infty;Y))}}$ and 
\item $W(\gamma_i)|_{STN(\infty;Y)}=\frac{1}{2}\tau_{S^3}^*\omega_{S^2}^{a_i}$.
\end{itemize}
For $i\in \{1,2,\cdots, 3n\}$, let $\phi_i^0(\Gamma):\breve S_{2n}(T^vX)\to S_2(T^vX)$ be the map induced by $\{1,2\}\cong\{s(i),t(i)\}\hookrightarrow \{1,\cdots, 2n\}$. It is possible to extend $\phi_i^0(\Gamma)$ to $S_{2n}(T^vX)$. We denote by $\phi_i(\Gamma):S_{2n}(T^vX)\to S(T^vX)$ such the extended map.
By an argument similar to Proposition 4.17 in \cite{Watanabe}, the following lemma holds.
\begin{lemma}\label{lem48}
There exists $\mu_n\in\mathcal A_n(\emptyset)$ such that
$$-\mu_n\Sign X+\sum_{\Gamma\in\mathcal E_n}\int_{S_{2n}(T^vX)}\bigwedge _i \phi_i(\Gamma)^*W(\gamma_i)[\Gamma]\in \mathcal A_n(\emptyset)$$
does not depend on the choice of $X$, $\beta_i$, and $W(\gamma_i)$.
\end{lemma}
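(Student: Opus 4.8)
The plan is to show that the quantity in Lemma~\ref{lem48} is a well-defined element of $\mathcal A_n(\emptyset)$ by a standard cobordism/gluing argument, following the scheme of Watanabe's Proposition 4.17 in \cite{Watanabe}. Concretely, I would fix $Y$ (with its data $\infty$, $a_i$, $\gamma_i$, $\omega_\partial(\gamma_i)$) and let $(X_0,\eta_{X_0},\beta_i^0,W_0(\gamma_i))$ and $(X_1,\eta_{X_1},\beta_i^1,W_1(\gamma_i))$ be two admissible choices as in the statement. Since both $X_0$ and $X_1$ are connected oriented $4$-manifolds bounding $Y$ with $\chi=0$, the closed oriented $4$-manifold $Z=X_0\cup_Y(-X_1)$ satisfies $\chi(Z)=0$ and $\Sign Z=\Sign X_0-\Sign X_1$. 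The unit vector fields $\eta_{X_0}$ and $\eta_{X_1}$ both restrict to the outward field $\eta_Y$ along $Y$, so they glue (after a homotopy supported near $Y$, since two unit vector fields agreeing on the boundary of a collar are homotopic rel boundary once $\chi=0$ removes the obstruction) to a unit vector field $\eta_Z$ on $Z$, with normal bundle $T^vZ$ restricting to $T^vX_j$ on each piece and to $TY$ along the gluing locus. Likewise the $\beta_i^j$ agree along $Y$ with $\gamma_i'$, so they glue to sections $\beta_i$ of $T^vZ$, transverse to the zero section, and the forms $W_j(\gamma_i)$ agree on $ST(Y\setminus N(\infty;Y))$ and on $STN(\infty;Y)$ (both being prescribed by $\omega_\partial(\gamma_i)$ and $\tfrac12\tau_{S^3}^*\omega_{S^2}^{a_i}$), hence glue to a closed $2$-form $W(\gamma_i)$ on $ST^vZ$ which is Poincar\'e dual to $\tfrac12[c_0(\beta_i)]$ (now an absolute cycle, since $\partial c_0(\beta_i^0)$ and $\partial c_0(\beta_i^1)$ cancel along $STY$).

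Next I would compute the difference of the two expressions in terms of integrals over $S_{2n}(T^vZ)$. By Stokes' theorem applied to $S_{2n}(T^vZ)$, broken along the copy of $S_{2n}(T^vX)|_Y$ in its interior, the difference
$$\sum_{\Gamma\in\mathcal E_n}\left(\int_{S_{2n}(T^vX_0)}\bigwedge_i\phi_i(\Gamma)^*W_0(\gamma_i)-\int_{S_{2n}(T^vX_1)}\bigwedge_i\phi_i(\Gamma)^*W_1(\gamma_i)\right)[\Gamma]$$
equals $\sum_{\Gamma}\left(\int_{S_{2n}(T^vZ)}\bigwedge_i\phi_i(\Gamma)^*W(\gamma_i)\right)[\Gamma]$ up to the boundary contributions of $S_{2n}(T^vZ)$, which are the faces $f(B)(T^vZ)$. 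So the real content is: the closed $4$-manifold invariant
$$\mathcal Z_n(Z)=\sum_{\Gamma\in\mathcal E_n}\left(\int_{S_{2n}(T^vZ)}\bigwedge_i\phi_i(\Gamma)^*W(\gamma_i)\right)[\Gamma]\in\mathcal A_n(\emptyset)$$
depends only on $\Sign Z$ (indeed is linear in it), so $\mathcal Z_n(Z)=\mu_n\Sign Z$ for a universal $\mu_n$, and subtracting $\mu_n\Sign X=\mu_n(\Sign X_0-\Sign X_1$ across $Z)$ cancels the discrepancy. To establish $\mathcal Z_n(Z)=\mu_n\Sign Z$ I would argue in two steps: first, that $\mathcal Z_n(Z)$ is independent of the auxiliary choices $\eta_Z$, $\beta_i$, $W(\gamma_i)$ on the fixed $Z$ (another Stokes/cobordism argument, moving between choices via a one-parameter family and checking the boundary faces $f(B)$ contribute zero by the AS/IHX relations exactly as in the Kontsevich--Kuperberg--Thurston and Watanabe setups); and second, that $\mathcal Z_n$ is additive under connected sum of closed $4$-manifolds and vanishes on $S^4$, while every closed oriented $4$-manifold with $\chi=0$ admitting a unit vector field is, after stabilization that does not change $\mathcal Z_n$, signature-cobordant to a connected sum of copies of $S^1\times S^3$ and $\pm(\CCP^2\sharp\,\overline{\CCP^2})$-type pieces whose contribution is controlled by signature alone; hence $\mathcal Z_n$ factors through $\Sign$.

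The technical heart — and the step I expect to be the main obstacle — is the vanishing of the boundary face contributions coming from $\partial S_{2n}(T^vZ)=\bigcup_{\sharp B\ge2}f(B)(T^vZ)$ in the Stokes argument. These are precisely the "principal" and "hidden"/"anomalous" faces familiar from configuration-space integrals: the faces where two or more points collide. On such a face $f(B)(T^vZ)$ with $\sharp B=2$ one gets, after integrating over the collision direction sphere, a sum over ways of contracting an edge, which is killed by the IHX relation (three-term) together with AS; the faces with $\sharp B\ge 3$ must be shown to contribute zero either for dimensional reasons or because the relevant form restricts to a pullback from a lower-dimensional space. This is exactly the kind of analysis carried out in Proposition~4.17 of \cite{Watanabe} and in \cite{KKT}, \cite{Lescop}, and I would invoke those arguments with the cosmetic change that the base is the $4$-manifold $Z$ (so $S_{2n}(T^vZ)$ replaces the closed configuration space and the relevant fiber bundle has $4$-dimensional base but $3$-dimensional fibers $\breve S_{2n}(\RR^3)$), tracking orientations via the conventions fixed in Section~2. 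A secondary obstacle is the homotopy-uniqueness of the unit vector field extension $\eta_X$ used implicitly throughout: one must note that since $\chi(X)=0$ the primary obstruction vanishes and, in dimension $4$ with $3$-dimensional fibers, the space of such extensions rel boundary is connected, so $T^vX$ is well-defined up to isomorphism and the construction does not secretly depend on $\eta_X$; this is where the precise statement "$\mu_n$ does not depend on the choice of $X$, $\beta_i$, $W(\gamma_i)$" gets its full strength, and I would fold the $\eta_X$-independence into the same cobordism argument by building a unit vector field on $Z\times[0,1]$ interpolating between the two choices.
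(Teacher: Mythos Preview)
Your overall reduction---glue $X_0$ and $-X_1$ along $Y$ to a closed $4$-manifold with $\chi=0$, then show the resulting integral depends only on the signature---matches the paper, and the face vanishing you identify is indeed Lemma~\ref{lem419}. But your route to ``$\mathcal Z_n=\mu_n\Sign$'' via connected-sum additivity does not work as stated: $\chi(S^4)=2$, so $\mathcal Z_n(S^4)$ is undefined (no nowhere-zero vector field on $S^4$), and $\chi(A\sharp B)=\chi(A)+\chi(B)-2$, so connected sum destroys the condition $\chi=0$. Any ``stabilization'' repairing this would itself require a cobordism argument to justify that it leaves $\mathcal Z_n$ unchanged, so you are back where you started.

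The paper instead goes to dimension $5$, and this is the step you are missing. Call the closed $4$-manifold $X$ (so $\Sign X=0$, $\chi(X)=0$) and take a compact $5$-manifold $Z_0$ with $\partial Z_0=X$. To lift the construction to $Z_0$ one needs not a single unit vector field but a \emph{$2$-framing} $(\eta^1,\eta^2)$ of $TZ_0$ extending the pair (outward normal, $\eta_X$) along $X$, so that its orthogonal complement $T^vZ_0$ has rank $3$ and the $\beta_i$ can be extended as sections of it. After attaching $2$-handles to kill $H_1(Z_0;\ZZ/2)$, the remaining obstruction to this extension lies in $H^5(Z_0,\partial Z_0;\pi_4(V_{5,2}))\cong H^5(Z_0,\partial Z_0;\ZZ/2)$ and need not vanish. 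The paper kills it by passing to $Z=Z_0\sharp Z_0$, so that $\partial Z=X\sqcup X$ and the obstruction doubles to zero; then Stokes on $S_{2n}(T^vZ)$ together with Lemma~\ref{lem419} yields $2\mathcal Z_n(X)=0$. The same $\ZZ/2$ phenomenon also undermines your one-parameter argument for $\eta$-independence: two unit vector fields on a closed $4$-manifold with $\chi=0$ differ by a class in $H^4(\,\cdot\,;\pi_4(S^3))\cong\ZZ/2$ and need not be homotopic.
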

\begin{proof}[Proof of Lemma~\ref{lem48}.]
Let $X$ be a closed 4-manifold with $\Sign X=0$ and $\chi(X)=0$. 
When $X$ is not connected, we assume that the Euler number of each component of $X$ is zero.
Let $\eta_X$ be an unit vector field of $TX$ and let $T^vX$ be the normal bundle of $\eta_X$ in $TX$.
Let $\beta_1,\cdots, \beta_{3n}$ be a family of sections of $T^vX$ that are transverse to the zero section in $T^vX$.
Let $W_i$ be a closed 2-form that represents the Poincar\'e dual of $c_0(\beta_i)$ in $ST^vX$, for $i=1,\cdots,3n$.
By a cobordism argument, it is sufficient to show that $\sum_{\Gamma\in\mathcal E_n}\int_{S_{2n}(T^vX)}\bigwedge_i\phi_i(\Gamma)^*W_i[\Gamma]=0$.

We first prove that there exist an oriented compact 5-manifold $Z$ and there exist unit vector fields $\eta_Z^1,\eta_Z^2\in \Gamma TZ$ such that:
\begin{itemize}
\item $\partial Z=X\sqcup X$,
\item $\eta_Z^1,\eta_Z^2$ are linearly independent at any point in $Z$, i.e., $(\eta_Z^1,\eta_Z^2)$ is a 2-framing of $TZ$,
\item $\eta_Z^1|_{\partial Z}$ is the outward unit vector field of $X=\partial Z$, and
\item $\eta_Z^2|_{\partial Z}=\eta_X\sqcup \eta_X$.
\end{itemize}
Since $\Sign X=0$, there exists a connected compact oriented 5-manifold $Z_0$ such that $\partial Z_0=X$.  
Let $\eta_{Z_0}\in \Gamma TZ_0|_{X}$ be the outward unit vector field of $X=\partial Z_0$. 
By attaching 2-handles along the knots generating $H_1(Z_0;\mathbb Z/2)$ if necessary,
we may assume that $H_1(Z_0;\mathbb Z/2)\cong H^4(Z_0;\partial Z_0;\mathbb Z/2)=0$.
Thus the primary obstruction $o_{Z_0}$ to extending the 2-framing $(\eta_{Z_0}, \eta_X)$ of $TZ_0|_{X}$ into $Z_0$ is in $H^5(Z_0,\partial Z_0;\pi_4(V_{5,2}))
=H^5(Z_0,\partial Z_0;\ZZ/2)$.
Let $Z=Z_0\sharp Z_0$. Then the obstruction to extending the 2-framing $(\eta_{Z_0}\sqcup \eta_{Z_0}, \eta_X\sqcup \eta_X)$ of $TZ|_{X\sqcup X}$ into $Z$ is $o_{Z_0}+o_{Z_0}=0\in H^5(Z,\partial Z;\ZZ/2)$.
So we can take $\eta_Z^1, \eta_Z^2$ satisfying the above conditions.

Let $T^vZ$ be the normal bundle of $\langle \eta^1_Z,\eta^2_Z\rangle$ in $TZ$.
Then $T^vZ$ is a rank 3 sub-bundle of $TZ$ satisfying $T^vZ|_{X}=T^vX$.
Let $\widetilde\beta_i\in \Gamma T^vZ$ be a vector field transverse to the zero section in $T^vZ$ satisfying $T^vZ|_{X}=\beta_i$.
Then $c_0(\widetilde\beta_i)=\overline{\left\{\frac{\widetilde\beta_i(x)}{\|\widetilde\beta_i(x)\|}, \frac{-\widetilde\beta_i(x)}{\|\widetilde\beta_i(x)\|}\in S(T^vZ)_x~\Big|~ x\in Z\setminus \widetilde\beta_i^{-1}(0)\right\}}^{\rm closure}\subset ST^vZ$
is a submanifold of $ST^vZ$ satisfying $\partial c_0(\widetilde\beta_i)=c_0(\beta_i)$.
Let $W(\widetilde\beta_i)$ be a closed 2-form on $ST^vZ$ that represents the Poincar\'e dual of $[c_0(\widetilde\beta_i),\partial c_0(\widetilde\beta_i)]$
and satisfying $W(\widetilde\beta_i)|_{ST^vX}=W_i$.
By Stokes' theorem, we have
\begin{eqnarray*}
0&=&\sum_{\Gamma\in\mathcal E_n}\int_{S_{2n}(T^vZ)}d\left(\bigwedge_i\phi_i(\Gamma)^*W(\widetilde\beta_i)\right)[\Gamma]\\
&=&2\sum_{\Gamma\in\mathcal E_n}\int_{S_{2n}(T^vX)}\bigwedge_i\phi_i(\Gamma)^*W_i[\Gamma]+
\sum_{\Gamma\in\mathcal E_n}\int_{\partial S_{2n}(T^vZ)}\bigwedge_i\phi_i(\Gamma)^*W(\widetilde\beta_i)[\Gamma]\\
&=&2\sum_{\Gamma\in\mathcal E_n}\int_{S_{2n}(T^vX)}\bigwedge_i\phi_i(\Gamma)^*W_i[\Gamma]+
\sum_{\Gamma\in\mathcal E_n}\sum_{2\le \sharp B<2n}\int_{f(B)(T^vZ)}\bigwedge_i\phi_i(\Gamma)^*W(\widetilde\beta_i)[\Gamma]\\
&=&2\sum_{\Gamma\in\mathcal E_n}\int_{S_{2n}(T^vX)}\bigwedge_i\phi_i(\Gamma)^*W_i[\Gamma].
\end{eqnarray*}
The lase equation is given by Lemma~\ref{lem419}.
\end{proof}
Let $\tau_Y$ be a framing of $T(Y\setminus \infty)$ satisfying $\tau_Y|_{N(\infty;Y)\setminus\infty}=\tau_{\RR^3}|_{N(\infty;S^3)\setminus \infty}$.
Then $\tau_Y^*\vec a=(\tau_Y^*a_1,\cdots,\tau_Y^*a_{3n})$ is a family of admissible vector fields.
Let $\tau_Y'=\tau_Y|_{Y\setminus N(\infty;Y)}\cup \tau_{S^3}|_{N(\infty;S^3)}$. So $\tau_Y'$ is a framing of $TY$.
Take $W(\tau_Y^*{a_i})|_{STY}=\frac{1}{2}(\tau'_Y)^*\omega_{S^2}^{a_i}$. 
\begin{lemma}
$\int_{S_{2n}(T^vX)}\bigwedge _i \phi_i(\Gamma)^*W(\tau_Y^*a_i)$ is independent of the choice of $a_1,\cdots, a_{3n}$.
\end{lemma}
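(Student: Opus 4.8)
The plan is to prove independence of the choice of $\vec a=(a_1,\dots,a_{3n})$ by a one-parameter deformation argument, showing that the derivative of the integral along any path in $(S^2)^{3n}$ vanishes. Fix two choices $\vec a$ and $\vec a\,'$ and connect them by a smooth path $\vec a(t)$, $t\in[0,1]$, in $(S^2)^{3n}$; since the framing $\tau_Y$ (hence $\tau_Y'$) is fixed throughout, each $\tau_Y^*a_i(t)$ is an admissible vector field and each $c_0(\beta_i(t))$, $W(\tau_Y^*a_i(t))$ can be chosen to vary smoothly in $t$, because the chosen extension over $X$ is dictated by the framing $\tau_{S^3}$ on $N(\infty;Y)$ and a transverse section $\beta_i$ elsewhere, neither of which forces any jump. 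First I would assemble these into closed $2$-forms $\widehat W_i$ on $[0,1]\times S(T^vX)$ (equivalently on $S(T^v(I\times X))$ after stabilizing the vertical bundle by the interval direction) restricting to $W(\tau_Y^*a_i(0))$ and $W(\tau_Y^*a_i(1))$ on the two ends.

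Next I would apply Stokes' theorem to $\sum_{\Gamma}\int_{S_{2n}(T^v([0,1]\times X))}d\!\left(\bigwedge_i\phi_i(\Gamma)^*\widehat W_i\right)[\Gamma]=0$, exactly as in the proof of Lemma~\ref{lem48}. The boundary of the $(1+\dim S_{2n}(T^vX))$-dimensional total space splits into: the two end fibers over $t=0,1$, which produce the difference of the two integrals we want to compare; and the "hidden" faces $f(B)(T^v(I\times X))$ for $2\le\sharp B<2n$. The first of these gives precisely the quantity whose vanishing is the assertion of the lemma, up to the contribution of the hidden faces.

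The main obstacle is therefore to show that the hidden-face contributions $\sum_{\Gamma}\sum_{2\le\sharp B<2n}\int_{f(B)(\cdots)}\bigwedge_i\phi_i(\Gamma)^*\widehat W_i[\Gamma]$ vanish. This is the same vanishing used at the end of the proof of Lemma~\ref{lem48}, invoking Lemma~\ref{lem419} (the face-cancellation lemma, whose statement lies beyond the excerpt but which encodes the AS/IHX relations together with the degeneracy of the relevant integrals on $f(B)$): for $B$ with $\sharp B\ge 3$ the partial integral over the $\breve S_B(\RR^3)$-fiber vanishes by the anti-symmetry $\iota_{S^2}^*\omega_{S^2}^{a}=-\omega_{S^2}^{a}$ encoded in the $W_i$ and a dimension count, while for $\sharp B=2$ the sum over $\Gamma$ cancels by the IHX-type relation in $\mathcal A_n(\emptyset)$. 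I would then note that the argument applies verbatim with the extra interval parameter since the interval direction never enters the vertical bundle of any $f(B)$ face in a way that alters the relevant degrees. Combining, the two end contributions are equal, i.e. $\int_{S_{2n}(T^vX)}\bigwedge_i\phi_i(\Gamma)^*W(\tau_Y^*a_i)$ is the same for $\vec a$ and $\vec a\,'$, which is the claim.

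I expect the only genuinely delicate point to be the smooth dependence of the data on $t$ together with checking that no boundary stratum over $t\in(0,1)$ other than the $f(B)$ faces appears — in particular that the support condition $W(\gamma_i)|_{STN(\infty;Y)}=\frac12\tau_{S^3}^*\omega_{S^2}^{a_i}$ can be maintained uniformly in $t$, so that no face at infinity contributes; this follows because the only $t$-dependence near $\infty$ is through the constant vector field $a_i(t)$ and $\omega_{S^2}^{a_i(t)}$ can be chosen to vary smoothly with support shrinking around $\pm a_i(t)$.
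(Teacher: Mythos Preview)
Your Stokes argument over $[0,1]\times X$ is a reasonable reformulation, but there is a genuine gap in the enumeration of boundary faces. You list the end fibers $\{0,1\}\times S_{2n}(T^vX)$ and the hidden faces $f(B)$, and then worry only about a ``face at infinity'' controlled by the condition on $STN(\infty;Y)$. But $X$ has boundary $\partial X=Y$, so $S_{2n}(T^v([0,1]\times X))$ has, besides the ends and the $f(B)$ faces, the entire codimension-one face $[0,1]\times S_{2n}(T^vX|_{Y})=[0,1]\times S_{2n}(TY)$. This is not a face at $\infty$; it sits over all of $Y$. Your argument, as written, does not address this contribution, and it is precisely here that the content of the lemma lies.

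The paper's proof is essentially the computation of this missing term. It uses Lemma~\ref{lem48} (via a collar inside $ST^vX$, not a product $[0,1]\times X$) to reduce the difference of the two integrals to
\[
\int_{S_{2n}(TY)\times[0,1]}\bigwedge_i\phi_i(\Gamma)^*\tfrac12(\tau_Y'\times{\rm id})^*\widetilde\omega_{S^2}^i,
\]
and then observes the crucial fact you do not invoke: because $W(\tau_Y^*a_i)|_{STY}=\tfrac12(\tau_Y')^*\omega_{S^2}^{a_i}$ is pulled back through the \emph{framing} $\tau_Y'$ on all of $TY$, the map
$S_{2n}(TY)\times[0,1]\to (S^2\times[0,1])^{3n}$
factors through $S_{2n}(\RR^3)\times[0,1]$, which has dimension $6n-3<6n$. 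Hence the $6n$-form is identically zero there. This dimension-count-through-the-framing is the heart of the argument; the $f(B)$ cancellation (Lemma~\ref{lem419}) that you emphasize is already packaged into Lemma~\ref{lem48} and is not where the dependence on $\vec a$ is killed. Once you insert this step for the face over $[0,1]\times Y$, your version becomes correct and essentially equivalent to the paper's.
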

\begin{proof}
Let $a_i'$ be an alternative choice of $a_i$ for any $i$.
Let $\widetilde\omega_{S^2}^{i}$ be a closed 2-form on $S^2\times [0,1]$ satisfying 
$\widetilde\omega_{S^2}^{i}|_{S^2\times\{0\}}=\omega_{S^2}^{a_i}$ and 
$\widetilde\omega_{S^2}^{i}|_{S^2\times\{1\}}=\omega_{S^2}^{a_i'}$.
Let $STY\times [0,1]\subset ST^vX$ be the collar of $STY$ such that $STY\times \{0\}=\partial ST^vX$.
We take $W(\tau_Y^*a_i)|_{STY\times [0,1]}=\frac{1}{2}(\tau_Y')^*\widetilde\omega_{S^2}^i$. 
Thus $W(\tau_Y^*a_i)|_{STY\times \{1\}}=W(\tau_Y^*a_i')$.
Since Lemma~\ref{lem48} {\rm (1)}, we have
\begin{eqnarray*}
&&\int_{S_{2n}(T^vX)}\bigwedge_i\phi_i(\Gamma)^*W(\tau_Y^*a_i)-
\int_{S_{2n}(T^vX)}\bigwedge_i\phi_i(\Gamma)^*W(\tau_Y^*a_i')\\
&=&\int_{S_{2n}(TY)\times [0,1]}\bigwedge_i\phi_i(\Gamma)^*W(\tau_Y^*a_i)\\
&=&\frac{1}{2^{3n}}\int_{S_{2n}(TY)\times [0,1]}\bigwedge_i\phi_i(\Gamma)^*(\tau_Y'\times {\rm id})^*\widetilde\omega_{S^2}^i\\
\end{eqnarray*} 
The map $S_{2n}(TY)\times[0,1] \stackrel{\prod_i\phi_i(\Gamma)}{\to}(STY\times [0,1])^{3n}\stackrel{(\tau_Y'\times{\rm id})^{3n}}{\to}(S^2\times [0,1])^{3n}$ factors through $S_{2n}(\RR^3)\times [0,1]$.
Hence we have $((\tau_Y'\times{\rm id})^{3n}\circ\prod_i\phi_i(\Gamma))^*\widetilde\omega_{S^2}^i\in {\rm Im}(\Omega^{6n}(S_{2n}(\RR^3)\times[0,1])\to
\Omega^{6n}(S_{2n}(TY)\times [0,1]))$.
Since $\dim S_{2n}(\RR^3)\times [0,1]=6n-3<6n=\dim \bigwedge_i\phi_i(\Gamma)^*(\tau_Y'\times{\rm id})^*\widetilde\omega_{S^2}^i$,
we have $\int_{S_{2n}(TY)\times [0,1]}\bigwedge_i\phi_i(\Gamma)^*(\tau_Y')^*\widetilde\omega_{S^2}^i=0$.
\end{proof}
Because of the above two lemmas,  
$-\mu_n\Sign X+\sum_{\Gamma\in\mathcal E_n}\int_{S_{2n}(T^vX)}\bigwedge _i \phi_i(\Gamma)^*W(\tau_{\RR^3}^*a_i)[\Gamma]$
is independent of the choice of a 4-manifold $X$ bounded by $S^3$ and a family $a_1,\cdots, a_{3n}$.
We define $$c_n=-\mu_n\Sign X+\sum_{\Gamma\in\mathcal E_n}\int_{S_{2n}(T^vX)}\bigwedge _i \phi_i(\Gamma)^*W(\tau_{\RR^3}^*a_i)[\Gamma]\in \mathcal A_n(\emptyset).$$
\begin{defini}
$$\widetilde{z_n}^{\rm anomaly}(\vec\gamma)=-\mu_n\Sign X+\sum_{\Gamma\in\mathcal E_n}\int_{S_{2n}(T^vX)}\bigwedge _i \phi_i(\Gamma)^*W(\gamma_i)[\Gamma]-c_n\in \mathcal A_n(\emptyset).$$
\end{defini}
\begin{remark}
We will show that $\mu_n=\frac{3}{2}c_n$ in Lemma~\ref{Lem74} and Lemma~\ref{Lem75}.
We can show that $\mu_1=72[\theta]\in \QQ[\theta]=\mathcal A_1(\emptyset)$ by explicit computation (cf. the proof of Proposition~\ref{rationalprop}).
\end{remark}
\subsection{Definition of the invariant.}\label{25}
\begin{theorem}\label{mainprop}
$$\widetilde z_n(Y)=\widetilde z_n(Y;\vec\gamma)-\widetilde z_n^{\rm anomaly}(\vec\gamma)\in \mathcal A_n(\emptyset)$$
does not depend on the choice of $\vec\gamma$. Thus $\widetilde z_n(Y)$ is a topological invariant of $Y$.
\end{theorem}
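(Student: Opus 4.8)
The plan is to show that $\widetilde z_n(Y;\vec\gamma) - \widetilde z_n^{\rm anomaly}(\vec\gamma)$ is unchanged under any admissible deformation of the data $\vec\gamma = (\gamma_1,\dots,\gamma_{3n})$ and the auxiliary propagators $\omega(\gamma_i)$, using a one-parameter family argument. Given two choices $(\vec\gamma^0,\{\omega^0(\gamma_i)\})$ and $(\vec\gamma^1,\{\omega^1(\gamma_i)\})$, one connects them by a generic smooth path: a family $\{\gamma_i^t\}_{t\in[0,1]}$ of vector fields on $Y\setminus\infty$ that are admissible (with respect to paths $a_i^t$ in $S^2$) for generic $t$, together with a family of propagators $\omega^t(\gamma_i)$ on $C_2(Y)$. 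On $C_2(Y)\times[0,1]$ this assembles into closed $2$-forms $\widehat\omega_i$ with $\iota^*\widehat\omega_i = -\widehat\omega_i$, restricting to $\omega^0(\gamma_i)$ and $\omega^1(\gamma_i)$ on the two ends. The quantity $\int_{C_{2n}(Y)}\bigwedge_i P_i(\Gamma)^*\omega^t(\gamma_i)$ then varies smoothly in $t$, and one computes its derivative via Stokes' theorem applied to $\int_{C_{2n}(Y)\times[0,1]}\,d\bigl(\bigwedge_i P_i(\Gamma)^*\widehat\omega_i\bigr)$, which vanishes since the integrand is exact.

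The boundary contributions then fall into two groups: the faces $F(\infty;B)$ and $F(B)$ of $\partial C_{2n}(Y)$ crossed with $[0,1]$, and the codimension-zero faces $C_{2n}(Y)\times\{0,1\}$. The second group is exactly $\widetilde z_n(Y;\vec\gamma^1) - \widetilde z_n(Y;\vec\gamma^0)$ (up to sign). For the first group, the standard vanishing arguments from Kontsevich–Kuperberg–Thurston and Lescop handle most faces: the principal face $F(2n)$ (all points collide) together with the faces where $\geq 3$ points collide or where a connected subgraph with a vertex of valence $\geq 2$ not on a single edge collapses contribute $0$ by IHX/AS relations and dimension counts; the faces $F(\infty;B)$ with $\infty$ involved are controlled by the behavior of $\omega^t(\gamma_i)$ near $\partial C_2(Y)$, where it is pinned to $p_Y^*\omega_{S^2}^{a_i^t}/2$. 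The genuinely new feature — and the main obstacle — is that for generic $t$ the vector fields $\gamma_i^t$ have zeros, so the "anomalous" faces where exactly two points collapse, i.e.\ $F(B)$ with $\sharp B = 2$, no longer vanish: the contribution localizes near the zero locus $\gamma_i^{-1}(0)$ and is governed precisely by the $3$-cycle $c(\gamma_i) = p_Y^{-1}(\pm a_i)\cup c_0(\gamma_i)$ and its extension $c_0(\beta_i)$ over the bounding $4$-manifold $X$.

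Handling this obstacle is where Lemma~\ref{keylemma}, Lemma~\ref{alt}, and Lemma~\ref{lem48} do the work. Using Lemma~\ref{keylemma}, $c(\gamma_i^t)$ is a genuine closed $3$-manifold in $\partial C_2(Y)$ for each generic $t$, so $\omega^t_\partial(\gamma_i)$ is well-defined; the key point is that as $t$ varies, $c(\gamma_i^t)$ sweeps out a $4$-chain in $\partial C_2(Y)\times[0,1]$ whose boundary over the $F(B)$-faces is computed on $ST(Y\setminus\infty)$, i.e.\ lives in the same sphere bundle that the anomaly term is built from. The family version of Lemma~\ref{lem48} (replacing $X$ by $X\times[0,1]$, or invoking the cobordism already present in its proof) shows that the change in $\sum_\Gamma\int_{S_{2n}(T^vX)}\bigwedge_i\phi_i(\Gamma)^*W(\gamma_i)[\Gamma]$ as $\vec\gamma$ varies is exactly the sum of these $F(B)$-boundary contributions, while the $\Sign X$ term and $c_n$ are constant. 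Hence $\widetilde z_n^{\rm anomaly}(\vec\gamma^1) - \widetilde z_n^{\rm anomaly}(\vec\gamma^0)$ cancels the $\sum_{\sharp B=2}$ boundary terms in the Stokes computation, leaving $\widetilde z_n(Y;\vec\gamma^1) - \widetilde z_n^{\rm anomaly}(\vec\gamma^1) = \widetilde z_n(Y;\vec\gamma^0) - \widetilde z_n^{\rm anomaly}(\vec\gamma^0)$. Independence of the propagator for fixed $\vec\gamma$ is the special case where $\gamma_i^t$ is constant in $t$ and only $\omega^t(\gamma_i)$ moves; then $c(\gamma_i^t)$ is constant, the $F(B)$ terms vanish outright, and the anomaly term does not even enter. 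Assembling these, $\widetilde z_n(Y)$ depends only on $Y$ and $\infty$, and independence of $\infty$ (or the usual connectedness argument on the space of admissible data) finishes the proof that it is a topological invariant.
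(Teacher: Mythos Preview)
Your proposal contains a genuine conceptual error in identifying which boundary face of $C_{2n}(Y)$ carries the anomaly. You assert that $F(2n)$ (all $2n$ points colliding) contributes zero and that the problematic faces are the $F(B)$ with $\sharp B = 2$. This is exactly backwards. The faces $F(B)$ with $2 \le \sharp B < 2n$ all vanish after summing over $\Gamma \in \mathcal E_n$, by precisely the same IHX/AS/symmetry arguments as in Lescop's Lemmas 2.18--2.21 (reproduced here as Lemma~\ref{lem418}); these arguments use only the antisymmetry $\iota^*\omega = -\omega$ and the combinatorics of trivalent graphs, and are completely insensitive to whether the vector fields $\gamma_i$ have zeros. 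Likewise the faces $F(\infty;B)$ vanish (Lemma~\ref{lem417}) because the propagator is pinned to $p_Y^*\omega_{S^2}^a$ there. The \emph{only} surviving face is $F(2n) = \breve S_{2n}(T(Y\setminus\infty))$. In the framed setting this face vanishes too, by a dimension count through $S_{2n}(\RR^3)$; but here, because $\omega_\partial(\gamma_i)|_{ST(Y\setminus\infty)}$ does \emph{not} factor through such a map when $\gamma_i$ has zeros, it survives. The anomaly term $\widetilde z_n^{\rm anomaly}(\vec\gamma)$ is designed so that its variation reproduces exactly this $F(2n)$ contribution, via the identification $F(2n)|_{Y\setminus N(\infty;Y)} = S_{2n}(T(Y\setminus N(\infty;Y))) \subset S_{2n}(T^vX)|_{\partial X}$ together with the matching condition $W(\gamma_i)|_{ST(Y\setminus N(\infty;Y))} = \omega_\partial(\gamma_i)|_{S\nu_{\Delta(Y\setminus N(\infty;Y))}}$. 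Your attempt to match the anomaly against the $\sharp B = 2$ faces cannot work, since those faces fibre with fibre $S^2$ over (roughly) $C_{2n-1}(Y)$, whereas the anomaly lives on $S_{2n}(T^vX)$ with fibre $S_{2n}(\RR^3)$.

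A secondary point: the paper does not run a one-parameter family at all. It changes a single $\gamma_i$ at a time and chooses a primitive $1$-form $\eta$ on $C_2(Y)$ with $d\eta = \omega(\gamma_i') - \omega(\gamma_i)$, together with a compatible $\eta_X$ on $ST^vX$ with $d\eta_X = W(\gamma_i') - W(\gamma_i)$; Stokes is then applied directly on $C_{2n}(Y)$ and on $S_{2n}(T^vX)$. Your family approach could be made to work once the correct face is identified, but as written the argument would not close.
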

\begin{defini}
$$\widetilde z_n(Y)=\widetilde z_n(Y;\vec\gamma)-\widetilde z_n^{\rm anomaly}(\vec\gamma)\in \mathcal A_n(\emptyset).$$
\end{defini}
\subsection{Well-definedness of $\widetilde z_n(Y)$ (proof of Theorem~\ref{mainprop}).}\label{s46}
In this section we give the sketch of the proof of well-definedness of $\widetilde z_n(Y)$, i.e., Theorem~\ref{mainprop}.
The proof of well-definedness of $\widetilde z_n$ is almost parallel to that of $z^{\rm KKT}_n$ by Lescop \cite{Lescop}.  

Fix $i\in\{1,\cdots,3n\}$.
For any $j\in \{1,\cdots, 3n\}$, let $a_j'$, $\gamma_j'$, $\beta_j'$, $\omega(\gamma_j')$ and $W(\gamma_j')$ be alternative choices of $a_j$, $\gamma_j$, $\beta_j$, $\omega(\gamma_j)$ and $W(\gamma_j)$ respectively.
Here $a_j'=a_j, \gamma'_j=\gamma_j$, $\omega(\gamma_j')=\omega(\gamma_j)$, $\beta'_j=\beta_j$ and $W(\omega'_j)=W(\omega_j)$ for $j\not=i$.
By the same argument of Proposition 2.15 in \cite{Lescop}, we have the following lemma.
\begin{lemma}
There exists a one-form $\eta_{S^2}\in \Omega^1(S^2)$ such that $d\eta_{S^2}=\omega^{a_j'}_{S^2}-\omega^{a_j}_{S^2}$,
and  a one-form $\eta\in \Omega^1(C_2(Y))$ such that
\begin{itemize}
\item $d\eta=\omega(\gamma_i')-\omega(\gamma_i)$,
\item $\eta|_{\partial C_2(Y)\setminus S\nu_{\Delta(Y\setminus N(\infty;Y))}}=p_Y^*\eta_{S^2}$.
\end{itemize}
\end{lemma}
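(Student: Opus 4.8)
The plan is to construct $\eta_{S^2}$ on $S^2$ by an elementary de Rham argument and then to build $\eta$ on $C_2(Y)$ by first installing the prescribed boundary value $p_Y^*\eta_{S^2}$ and afterwards correcting by an exact $1$-form, the correction being available thanks to the rational homology $3$-sphere hypothesis; this is the scheme of Proposition~2.15 in \cite{Lescop} adapted to vector fields. For the first point, $\omega_{S^2}^{a_j}$ and $\omega_{S^2}^{a_j'}$ are closed $2$-forms each Poincar\'e dual to the $0$-cycle $\pm a$, hence have equal integral over $S^2$ and represent the same class in $H^2(S^2;\RR)$; since $H^1(S^2;\RR)=0$ their difference is exact and I take $\eta_{S^2}\in\Omega^1(S^2)$ with $d\eta_{S^2}=\omega_{S^2}^{a_j'}-\omega_{S^2}^{a_j}$. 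Replacing $\eta_{S^2}$ by $\tfrac12(\eta_{S^2}-\iota_{S^2}^*\eta_{S^2})$ does not change $d\eta_{S^2}$ (both $\omega_{S^2}^{a}$ are anti-symmetric), so I may also arrange $\iota_{S^2}^*\eta_{S^2}=-\eta_{S^2}$, which will be convenient downstream.

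Next, write $U=\partial C_2(Y)\setminus S\nu_{\Delta(Y\setminus N(\infty;Y))}$, the part of the boundary on which $p_Y$ is defined; recall it consists of $ST_\infty Y\times(Y\setminus\infty)$, $(Y\setminus\infty)\times ST_\infty Y$ and $q^{-1}(N(\infty;Y)^2)$. I first choose any $\widetilde\eta\in\Omega^1(C_2(Y))$ with $\widetilde\eta|_U=p_Y^*\eta_{S^2}$: the form $p_Y^*\eta_{S^2}$ extends continuously over the remainder of $\partial C_2(Y)$ through the same gluing (via $\tau_{\RR^3}$ near $\infty$) that defines $p_Y$, and a collar together with a partition of unity then extends it into $C_2(Y)$. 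On $U$ the propagators restrict to the prescribed multiples of $p_Y^*\omega_{S^2}^{a_j}$ and $p_Y^*\omega_{S^2}^{a_j'}$, so after rescaling $\widetilde\eta$ by the overall normalization constant of the propagators the closed $2$-form $\alpha:=\omega(\gamma_i')-\omega(\gamma_i)-d\widetilde\eta$ vanishes on $U$; anti-symmetrizing $\widetilde\eta$ under $\iota$ gives in addition $\iota^*\alpha=-\alpha$.

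It then remains to find $\mu\in\Omega^1(C_2(Y))$ with $d\mu=\alpha$ and $\mu|_U=0$, after which $\eta:=\widetilde\eta+\mu$ works. Since $Y$ is a rational homology $3$-sphere, $H^2(C_2(Y);\RR)\to H^2(\partial C_2(Y);\RR)$ is an isomorphism, so $[\alpha]=0$ in $H^2(C_2(Y);\RR)$ provided $[\alpha|_{\partial C_2(Y)}]=0$; and this holds because $2\omega_{\partial}(\gamma_i)$ and $2\omega_{\partial}(\gamma_i')$ are Poincar\'e dual in the closed $5$-manifold $\partial C_2(Y)$ to $[c(\gamma_i)]$ and $[c(\gamma_i')]$, and these two homology classes coincide --- here one uses, exactly as in Lescop, that the boundary class of a propagator with respect to an admissible vector field does not depend on the vector field, the point being that it is pinned down by the prescribed restriction on $U$ (where $[\omega_{S^2}^a]\in H^2(S^2)$ is independent of $a$) together with the anti-symmetry under $\iota$. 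Thus $\alpha=d\mu_0$ for some $\mu_0$; since $\alpha|_U=0$ the restriction of $\mu_0$ to $\overline U$ is closed, and it is exact there because the pieces of $U$ listed above have vanishing first real cohomology (for the first two, because $H^1(Y\setminus\infty;\RR)=0$). Writing this restriction as $dg$, extending $g$ to $C_2(Y)$ and setting $\mu:=\mu_0-dg$ finishes the construction; a last anti-symmetrization makes $\eta$ anti-symmetric under $\iota$.

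The step I expect to be the genuine obstacle is the cohomological input just used: that the difference of the two propagators is exact relative to the prescribed behavior along $U$. This is precisely where the rational homology sphere hypothesis enters, through the isomorphism $H^2(C_2(Y);\RR)\cong H^2(\partial C_2(Y);\RR)$, and it forces one to verify carefully that the class $[c(\gamma)]\in H_3(\partial C_2(Y);\RR)$ is independent of the admissible vector field $\gamma$ and that the pieces of $\overline U$ have the claimed first cohomology --- the part of the argument that must be transported from Lescop's Proposition~2.15 to the present setting.
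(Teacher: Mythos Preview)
Your proof is correct and follows exactly the route the paper indicates: the paper gives no argument of its own here but simply invokes Proposition~2.15 of \cite{Lescop}, and your proposal is precisely the adaptation of that construction to the vector-field setting, including the correct identification of the step where the rational homology sphere hypothesis (the isomorphism $H^2(C_2(Y);\RR)\cong H^2(\partial C_2(Y);\RR)$) is used.
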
 
Similarly, the following lemma holds.
\begin{lemma}
There exists a one-form $\eta_X\in \Omega^1(ST^vX)$ such that
\begin{itemize}
\item $d\eta_{X}=W(\gamma_i')-W(\gamma_i)$,
\item $\eta_X|_{ST(Y\setminus N(\infty;Y))}=\eta|_{S\nu_{\Delta(Y\setminus N(\infty;Y))}}$,
\item $\eta_X|_{ST^vX|_{N(\infty;Y)}}=\tau_{S^3}^*\eta_{S^2}$.
\end{itemize}
\end{lemma}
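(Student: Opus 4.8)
The plan is to run the same de Rham argument as in the preceding lemma (which follows Proposition~2.15 of \cite{Lescop}), now on the $S^2$-bundle $ST^vX\to X$, a compact oriented $6$-manifold with $\partial(ST^vX)=STY$. First I would assemble the $1$-form that is to become $\eta_X|_{\partial(ST^vX)}$: over $Y\setminus N(\infty;Y)$ take $\eta|_{S\nu_{\Delta(Y\setminus N(\infty;Y))}}$, and over $N(\infty;Y)$ (which is exactly $ST^vX|_{N(\infty;Y)}$, since $T^vX|_Y=TY$) take $\tau_{S^3}^*\eta_{S^2}$. These two patches agree on the overlap $STY|_{\partial N(\infty;Y)}$: near $\partial N(\infty;Y)$ the framing $\tau_{S^3}$ coincides with the standard framing, while the condition $\eta|_{\partial C_2(Y)\setminus S\nu_{\Delta(Y\setminus N(\infty;Y))}}=p_Y^*\eta_{S^2}$ from the preceding lemma, together with the smoothness of $\eta$ across the diagonal face of $C_2(Y)$, forces the restriction of $\eta$ to that face to equal, over $\partial N(\infty;Y)$, the pull-back of $\eta_{S^2}$ by the standard framing. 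Writing $\eta_\partial$ for the resulting $1$-form on $STY$, one checks face by face, from the defining properties of the $W(\gamma_i)$ and the identities $d\eta=\omega(\gamma_i')-\omega(\gamma_i)$ and $d\eta_{S^2}=\omega_{S^2}^{a_i'}-\omega_{S^2}^{a_i}$, that $d\eta_\partial=(W(\gamma_i')-W(\gamma_i))|_{STY}$ (up to the normalizing constants fixed in the preceding lemma).

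The only substantive point is that $W(\gamma_i')-W(\gamma_i)$ is exact on $ST^vX$. By Poincar\'e--Lefschetz duality on the $6$-manifold $ST^vX$, the class of $2W(\gamma_i')-2W(\gamma_i)$ in $H^2(ST^vX;\RR)$ is dual to $[c_0(\beta_i'),\partial c_0(\beta_i')]-[c_0(\beta_i),\partial c_0(\beta_i)]\in H_4(ST^vX,\partial(ST^vX);\RR)$, so it suffices to show this relative class vanishes. To this end I would interpolate the two sections: choose a section $\widetilde\beta$ of $T^vX\times[0,1]\to X\times[0,1]$, transverse to the zero section, with $\widetilde\beta|_{X\times\{0\}}=\beta_i$ and $\widetilde\beta|_{X\times\{1\}}=\beta_i'$, exactly as in the proof of Lemma~\ref{lem48}. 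By the argument of Lemma~\ref{keylemma}, $c_0(\widetilde\beta)\subset S(T^vX\times[0,1])=ST^vX\times[0,1]$ is a $5$-dimensional submanifold whose boundary consists of $c_0(\beta_i)\times\{0\}$, $-c_0(\beta_i')\times\{1\}$, and a piece lying over $Y\times[0,1]$, hence contained in $STY\times[0,1]$. Collapsing the $[0,1]$-factor, this $5$-chain exhibits $[c_0(\beta_i),\partial c_0(\beta_i)]=[c_0(\beta_i'),\partial c_0(\beta_i')]$ in $H_4(ST^vX,\partial(ST^vX);\RR)$. Hence $W(\gamma_i')-W(\gamma_i)=d\eta_X^0$ for some $\eta_X^0\in\Omega^1(ST^vX)$.

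It then remains to correct the boundary value of $\eta_X^0$. Since $d(\eta_X^0|_{STY})=(W(\gamma_i')-W(\gamma_i))|_{STY}=d\eta_\partial$, the difference $\eta_X^0|_{STY}-\eta_\partial$ is a closed $1$-form on $STY$; and because $Y$ is a rational homology $3$-sphere, the Gysin sequence of the $S^2$-bundle $STY\to Y$ gives $H^1(STY;\RR)\cong H^1(Y;\RR)=0$, so $\eta_X^0|_{STY}-\eta_\partial=dg_\partial$ for some smooth function $g_\partial$ on $STY$. Extending $g_\partial$ to a smooth function $g$ on $ST^vX$ and setting $\eta_X=\eta_X^0-dg$ gives a $1$-form with $d\eta_X=W(\gamma_i')-W(\gamma_i)$ and $\eta_X|_{STY}=\eta_\partial$, the two halves of which are precisely the asserted conditions $\eta_X|_{ST(Y\setminus N(\infty;Y))}=\eta|_{S\nu_{\Delta(Y\setminus N(\infty;Y))}}$ and $\eta_X|_{ST^vX|_{N(\infty;Y)}}=\tau_{S^3}^*\eta_{S^2}$. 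The main obstacle is the middle step, namely establishing that the doubled section graphs $c_0(\beta_i)$ and $c_0(\beta_i')$ are homologous rel boundary in $ST^vX$; the first and last steps are the soft de Rham bookkeeping already carried out in the preceding lemma and in \cite{Lescop}.
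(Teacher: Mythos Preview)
Your proof is correct and follows the same three–step architecture as the paper's: glue the prescribed boundary $1$-form $\eta_\partial$, show $W(\gamma_i')-W(\gamma_i)$ is exact on $ST^vX$, then kill the closed $1$-form $\eta_X^0|_{STY}-\eta_\partial$ using $H^1(STY;\RR)=0$. The one substantive difference is in the exactness step. The paper does not build an explicit bordism; it simply observes (referring forward to Lemma~\ref{lemma1} in the appendix) that for \emph{any} section $\beta$ of $T^vX$ transverse to zero, the class $2[W(\gamma)]\in H^2(ST^vX)$ is the relative Euler class of the vertical tangent bundle $F_X$ of $ST^vX\to X$, hence is independent of $\beta$, so $[W(\gamma_i)]=[W(\gamma_i')]$ automatically. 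Your interpolation $\widetilde\beta$ over $X\times[0,1]$ and projection to $ST^vX$ is a perfectly good alternative: it is more self-contained (no forward reference) and is essentially the hands-on proof of that Euler-class fact in this particular case. Conversely, the paper's route explains \emph{why} the class is canonical rather than just checking two instances agree. For the final correction step your Gysin computation $H^1(STY;\RR)\cong H^1(Y;\RR)=0$ is exactly what the paper is using (its ``$H^1(ST^vX)=0$'' is a typo for $H^1(STY)=0$, as is clear from the next line, where the primitive $\mu_X$ is taken on $STY$).
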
 
\begin{proof}
Set $\eta_X^0=\eta|_{ST(Y\setminus N(\infty;Y))}\cup \tau_{S^3}^*\eta_{S^2}$.
By the construction of $c_0(\beta_i), c_0(\beta_i')$, we have $[W(\gamma_i)]=[W(\gamma_i')]\in H^2(ST^vX)$ (cf. Lemma~\ref{lemma1}).
Thus there is a one-form $\eta_X^1\in\Omega^1(ST^vY)$ such that $d\eta^1_X=W(\gamma_i)-W(\gamma_i')$.
Since $H^1(ST^vX)=0$, there is a function $\mu_X\in \Omega^0(ST^vY)$ such that $d\mu_X=\eta_X^1|_{ST^vY}-\eta_X^0$.
Let $h:ST^vX\to \RR$ be a $C^{\infty}$ function such that $h\equiv 1$ near $ST^vY(=\partial ST^vX)$ and $h\equiv 0$ far from $ST^vY$.
We can take $\eta_X=\eta_X^1-d(h\mu_X)$ using collar of $ST^vY$ in $ST^vX$.
\end{proof}
Set
\[ \widetilde\omega_j=
  \begin{cases}
   \omega(\gamma_j) (=\omega(\gamma_j'))& j\not=i,\\
   \eta & j=i.
  \end{cases} \]
Set
\[ \widetilde W_j=
  \begin{cases}
   W(\gamma_j) (=W(\gamma_j'))& j\not=i,\\
   \eta_X & j=i.
  \end{cases} \]
By Stokes' theorem,  
\begin{eqnarray*}
&&\int_{C_{2n}(Y)}\bigwedge_jP_j(\Gamma)^*\omega(\gamma_j)-
\int_{C_{2n}(Y)}\bigwedge_jP_j(\Gamma)^*\omega(\gamma'_j)\\
&=&\int_{\partial C_{2n}(Y)} \bigwedge_jP_j(\Gamma)^*\widetilde\omega_j\\
&=&\sum_{F\subset \partial C_{2n}(Y): \mbox{face}}\int_F\bigwedge_jP_j(\Gamma)^*\widetilde\omega_j.
\end{eqnarray*}
\begin{lemma}[{Lescop \cite[Lemma 2.17]{Lescop}}]\label{lem417}
For any non-empty subset $B$ of $2n=\{1,\cdots, 2n\}$, for any $\Gamma\in\mathcal E_n$,
$$\int_{F(\infty;B)}\bigwedge_jP_j(\Gamma)^*\widetilde\omega_j=0.$$
\end{lemma}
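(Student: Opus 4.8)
The plan is to prove the stronger assertion that the integrand $\bigwedge_j P_j(\Gamma)^*\widetilde\omega_j$ is identically zero on $F(\infty;B)$, by a dimension count exploiting the product structure of this boundary face. Write $b=\sharp B$ and $B^c=\{1,\dots,2n\}\setminus B$. Recall from Lescop's construction of $C_{2n}(Y)$ that $F(\infty;B)$ is a fibre bundle over $C_{2n-b}(Y)$: the base records the configuration of the points labelled by $B^c$, and the fibre records the infinitesimal configuration at $\infty$ of the points labelled by $B$. Using the fixed identification $\tau^\infty$, this fibre is a manifold $\mathcal S_B$ of dimension $3b-1$, built from the space of configurations of $b$ points in $\RR^3=T_\infty Y$ taken up to dilation; thus $\dim F(\infty;B)=3(2n-b)+(3b-1)=6n-1$. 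Let $\pi\colon F(\infty;B)\to\mathcal S_B$ be the fibrewise projection, and let $\mathcal S_B'$ be the corresponding space of configurations of $b$ points in $\RR^3$ up to translation and dilation, a quotient of $\mathcal S_B$ of dimension $3b-4$. We will use the restriction formulas already proved: on $\partial C_2(Y)\setminus S\nu_{\Delta(Y\setminus N(\infty;Y))}$ one has $\omega(\gamma_j)=\tfrac12 p_Y^*\omega_{S^2}^{a_j}$ for $j\ne i$ and $\eta=p_Y^*\eta_{S^2}$, and on the pieces $ST_\infty Y\times(Y\setminus\infty)$, $(Y\setminus\infty)\times ST_\infty Y$, $q^{-1}(\infty^2)$ the map $p_Y$ is $p_1$, $\iota_{S^2}\circ p_2$, $p_c$ respectively.

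The next step is to sort the edges of $\overline\Gamma$ by how they meet $B$. If both endpoints of $j$ lie in $B^c$, then $P_j(\Gamma)$ keeps both points in $Y\setminus\infty$ and $P_j(\Gamma)^*\widetilde\omega_j$ depends only on the base. If both endpoints lie in $B$, then $P_j(\Gamma)$ maps $F(\infty;B)$ into $q^{-1}(\infty^2)$, where $p_Y=p_c$, so $P_j(\Gamma)^*\widetilde\omega_j$ is $\pi^*$ of the pull-back of $\omega_{S^2}^{a_j}$ (or $\eta_{S^2}$ when $j=i$) under the map $\mathcal S_B\to S^2$ recording the relative direction of the two cluster points; being a function of relative directions only, this form is pulled back from $\mathcal S_B'$. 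The decisive case is an edge $j$ from $u\in B$ to $v\in B^c$: there $P_j(\Gamma)$ maps $F(\infty;B)$ into $ST_\infty Y\times(Y\setminus\infty)$ (or its flip), on which $p_Y=p_1$ (resp. $\iota_{S^2}\circ p_2$) \emph{forgets the finite point} $x_v$, so $P_j(\Gamma)^*\widetilde\omega_j$ is again $\pi^*$ of a form on $\mathcal S_B$, namely the pull-back of $\omega_{S^2}^{a_j}$ (or $\eta_{S^2}$) under the map recording the direction at $\infty$ of $x_u$.

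Now the count closes the argument. Let $e_0$ (resp. $e_1$) be the number of edges of $\overline\Gamma$ with both endpoints (resp. exactly one endpoint) in $B$; trivalence of the $b$ vertices of $B$ and the absence of simple loops give $2e_0+e_1=3b$, so $e_0+e_1=(3b+e_1)/2$ edges meet $B$. By the previous paragraph $\alpha:=\bigwedge_{j\ \mathrm{meeting}\ B}P_j(\Gamma)^*\widetilde\omega_j$ is $\pi^*$ of a form $\beta$ on $\mathcal S_B$, with $\deg\beta=2(e_0+e_1)=3b+e_1$, one less if the distinguished edge $i$ meets $B$. If $e_1\ge 1$ then $\deg\beta\ge 3b>3b-1=\dim\mathcal S_B$, so $\beta=0$. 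If $e_1=0$ then $B$ is a union of connected components of $\overline\Gamma$, every factor of $\alpha$ is pulled back from $\mathcal S_B'$, hence so is $\beta$, and $\deg\beta\ge 2e_0-1=3b-1>3b-4=\dim\mathcal S_B'$, so again $\beta=0$. Either way $\alpha=0$, hence $\bigwedge_jP_j(\Gamma)^*\widetilde\omega_j=\alpha\wedge\bigl(\bigwedge_{j\ \mathrm{in}\ B^c}P_j(\Gamma)^*\widetilde\omega_j\bigr)$ vanishes identically on $F(\infty;B)$, and the integral is $0$.

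The part I expect to require the most care is not the count but justifying the two ``factors through $\pi$'' claims: one must unwind Lescop's definition of $C_{2n}(Y)$, the precise fibre-bundle structure of the faces $F(\infty;B)$, and the extended projections $P_j(\Gamma)$, in order to verify that the relevant compositions land in $\partial C_2(Y)\setminus S\nu_{\Delta(Y\setminus N(\infty;Y))}$ (so that the restriction formulas for $\omega(\gamma_j)$ and $\eta$ apply) and that $p_1$, respectively $p_c$, really annihilates the coordinates claimed. These verifications are exactly the content of Lescop's Lemma~2.17, which I would cite for the details. The remaining bookkeeping is minor: passing from $\widetilde\omega_i=\omega(\gamma_i)$ to the $1$-form $\eta$ only lowers one of the degrees above by a unit, which the strict inequalities absorb, and no connectivity hypothesis on $\overline\Gamma$ is needed since the case $e_1=0$ is treated directly.
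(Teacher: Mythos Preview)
Your proposal is correct and follows exactly the approach of Lescop's Lemma~2.17 that the paper invokes: the paper itself gives no independent argument here, merely noting that ``the proofs of these two lemmas are completely same as the proof in [Lescop].'' Your dimension count, together with the case split $e_1\ge 1$ versus $e_1=0$ (using translation-invariance of $p_c$ to descend to $\mathcal S_B'$ in the latter case), faithfully reproduces Lescop's proof.
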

\begin{lemma}[{Lescop \cite[Lemma 2.18, 2.19, 2.20, and 2.21]{Lescop}}]\label{lem418}
For any $B\subset \{1,\cdots, 2n\}$ with $\sharp B\ge 2$ and $B\not=\{1,\cdots 2n\}$
$$\sum_{\Gamma\in\mathcal E_n}\left(\int_{F(B)}\bigwedge_jP_j(\Gamma)^*\widetilde\omega_j\right)[\Gamma]=0.$$
\end{lemma}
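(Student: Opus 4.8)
The plan is to reproduce, in our setting, Lescop's proof of the corresponding Lemmas~2.18--2.21 of~\cite{Lescop}, after checking that it uses only properties shared by our propagators: antisymmetry under $\iota$, agreement with $\tfrac12 p_Y^*\omega_{S^2}^{a_j}$ (resp.\ with $p_Y^*\eta_{S^2}$) on $\partial C_2(Y)\setminus S\nu_{\Delta(Y\setminus N(\infty;Y))}$, and the correct fibrewise mass along $S\nu_{\Delta(Y\setminus\infty)}$. The faces $F(B)$ with $2\le\sharp B<2n$ are the ``finite collision'' faces; one argues according to $\sharp B$ and to the combinatorics of the subgraph $\overline\Gamma_B\subset\overline\Gamma$ spanned by the vertices labelled by $B$, using that $F(B)$ fibres over a copy of $C_{2n-\sharp B+1}(Y)$ with fibre $\breve S_B(\RR^3)$ and that $\dim C_{2n}(Y)=6n$.

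For $\sharp B=2$, write $B=\{v_1,v_2\}$, so the fibre is $\breve S_2(\RR^3)=S^2$. If $v_1,v_2$ are joined in $\overline\Gamma$ by no edge, none of the forms $P_j(\Gamma)^*\widetilde\omega_j$ depends on the fibre direction, so the integrand on $F(B)$ is pulled back from the $(6n-3)$-dimensional base and its integral over $F(B)$ (dimension $6n-1$) vanishes. If $v_1,v_2$ are joined by $k\ge2$ edges, those $k$ forms restrict on each fibre sphere to pullbacks of $2$-forms (or of the $1$-form $\eta$) under the collision-direction map $S^2\to S^2$, and their wedge, of degree $\ge 2k-1\ge3$, vanishes on that $2$-sphere, so again the integral vanishes termwise. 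In the remaining subcase $v_1,v_2$ are joined by exactly one edge $e$: if $e=i$ the fibrewise integral of $\widetilde\omega_i=\eta$ over $S^2$ is $0$ for degree reasons; if $e\ne i$, on each fibre $P_e(\Gamma)$ is, up to $\iota_{S^2}$, the tautological identification of $\breve S_2(\RR^3)$ with the fibre of $S\nu_{\Delta(Y\setminus\infty)}=ST(Y\setminus\infty)$, and $\int_{S^2}\omega(\gamma_e)|_{\text{fibre}}=1$: by Lemma~\ref{keylemma} and the normalization $2\omega_\partial(\gamma_e)=\mathrm{PD}[c(\gamma_e)]$, a generic fibre sphere meets $c(\gamma_e)$ exactly in the pair $\pm\gamma_e/\|\gamma_e\|$, each counted positively. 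After this fibre integration the residual integral over $C_{2n-1}(Y)$ of the remaining $3n-1$ forms depends only on the collapsed diagram $\overline\Gamma'$ obtained by contracting $e$ (merging $v_1,v_2$ into a $4$-valent vertex), since for $j\ne e$ these data descend to $C_{2n-1}(Y)$ depending only on $\overline\Gamma'$. Exactly three diagrams of $\mathcal E_n$ contract to $\overline\Gamma'$, the three resolutions of the $4$-valent vertex into two trivalent vertices joined by an edge — none has a simple loop, because $e$ was the only edge between $v_1$ and $v_2$ — and they form an IHX triple; with the orientations induced by the conventions of Section~2 their contributions combine into $c\,([\Gamma_1]+[\Gamma_2]+[\Gamma_3])=0$ in $\mathcal A_n(\emptyset)$. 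Summing over $\overline\Gamma'$ disposes of the case $\sharp B=2$.

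For $\sharp B\ge3$ (hidden faces) the fibre $\breve S_B(\RR^3)$ has dimension $3\sharp B-4$. If $\overline\Gamma_B$ is disconnected, forgetting the relative position and scale of two of its clusters realizes the restriction of the integrand to a fibre as a pullback from a manifold of dimension $<3\sharp B-4$, so the fibre integral vanishes; if $\overline\Gamma_B$ is connected but has a vertex $v$ of $\overline\Gamma_B$-valence $\le1$, forgetting the position of $v$ does the same thing (Kontsevich's vanishing argument). The remaining configurations of $\overline\Gamma_B$ — in particular those in which a low-valence vertex is incident to the distinguished edge $i$ — are handled by the refined pullback arguments of Lescop's Lemmas~2.19--2.21, again yielding termwise vanishing. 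The only configuration not of this form is the one in which every vertex has $\overline\Gamma_B$-valence $3$, i.e.\ $B$ is a union of connected components of $\overline\Gamma$; since $B\ne\{1,\dots,2n\}$ this can occur only for disconnected $\overline\Gamma$, and such a face is handled by the degree-count and antisymmetry argument already used for the anomalous face $F(2n)$ (compare the proof of Lemma~\ref{lem48} and \cite[Section~2]{Lescop}). Together with the $\sharp B=2$ analysis this proves the lemma.

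The step I expect to be the main obstacle is the sign and orientation bookkeeping in the single-edge subcase of $\sharp B=2$: one must verify that the boundary orientation of $F(B)$ inherited from $\partial C_{2n}(Y)$, combined with the product orientation used for the $S^2$-fibre integration, assigns to $\Gamma_1,\Gamma_2,\Gamma_3$ precisely the signs that turn $c([\Gamma_1]+[\Gamma_2]+[\Gamma_3])$ into the IHX relator rather than some other combination. This is exactly what \cite[Lemmas~2.18--2.21]{Lescop} carries out; since all orientation conventions of Section~2 agree with Lescop's and our forms $\widetilde\omega_j$ enjoy every property his propagators do, that verification transfers verbatim.
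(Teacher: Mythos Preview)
Your proposal is exactly the paper's approach: the paper gives no argument beyond the sentence ``The proofs of these two lemmas are completely same as the proof in \cite{Lescop},'' and you have supplied a sketch of that very proof together with the verification that the propagators $\widetilde\omega_j$ enjoy the properties (antisymmetry, boundary behaviour, unit fibre mass on $S\nu_{\Delta}$) that Lescop's argument requires. So there is nothing to compare.

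One small caution. Your treatment of the case where $\overline\Gamma_B$ is connected and every vertex has $\overline\Gamma_B$-valence $3$ (so $B$ is a full connected component of a disconnected $\overline\Gamma$) is the most delicate spot, and your appeal to ``the degree-count and antisymmetry argument already used for the anomalous face $F(2n)$'' is a little misleading: the $F(2n)$ face does \emph{not} vanish in general (it produces the anomaly), so you cannot literally invoke that computation. What you need here is that the integrand over $F(B)$ factors as a product of a form pulled back from $\breve S_B(T(Y\setminus\infty))$ (dimension $3\sharp B-1$) and a form pulled back from $\breve C_{2n-\sharp B}(Y)$, and then a parity/degree argument on the first factor; this is indeed in Lescop's Section~2, but it is not the same mechanism as the one isolating the anomalous face. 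Since the paper itself gives no more detail than you do, this is not a discrepancy with the paper's proof, only a place where your sketch could be sharpened.
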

The proofs of these two lemmas are completely same as the proof in \cite{Lescop}.
The following lemma is proved as Lemma 2.18, 2.19, 2.20, and 2.21 in \cite{Lescop}
(See also the proof of Proposition 2.10 in \cite{Lescop}).   
\begin{lemma}\label{lem419}
For any $B\subset \{1,\cdots,2n\}$ with $2\le \sharp B<2n$, 
\begin{itemize}
\item[{\rm (1)}] $\sum_{\Gamma\in\mathcal E_n}\int_{f(B)(T^vX)}\bigwedge_j\phi_j(\Gamma)^*\widetilde W_j[\Gamma]=0$,
\item[{\rm (2)}] $\sum_{\Gamma\in\mathcal E_n}\int_{f(B)(T^vZ)}\bigwedge_j\phi_j(\Gamma)^*W(\widetilde\beta_j)[\Gamma]=0$
(See the proof of Lemma~\ref{lem48} for the notation $Z, W(\widetilde\beta_j)$).
\end{itemize}
\end{lemma}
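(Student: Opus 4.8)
The plan is to deduce both vanishing statements from Lescop's treatment of the corresponding faces of $C_{2n}(Y)$ in \cite{Lescop} (Lemmas 2.18--2.21, and the proof of Proposition 2.10): I would show that the faces $f(B)(T^vX)$ and $f(B)(T^vZ)$ are, respectively, $\breve S_B(\RR^3)$-bundles over the spaces of configurations with $B$ collapsed to a single point — which in turn lie over $X$ and over $Z$ — in such a way that, after restriction to one such fiber, the integrand and all of its cancellations take place inside Lescop's Euclidean picture.

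Concretely, I would first examine $\prod_j\phi_j(\Gamma)$ on $f(B)(\cdot)$. For an edge $j$ of $\overline\Gamma$ whose two endpoints both lie in $B$, the form $\phi_j(\Gamma)^*\widetilde W_j$ (resp. $\phi_j(\Gamma)^*W(\widetilde\beta_j)$) depends only on the infinitesimal factor $\breve S_B(T^vX)$ (resp. $\breve S_B(T^vZ)$), and, by local triviality of $T^vX$ (resp. $T^vZ$), it restricts to each fiber $\breve S_B(\RR^3)$ as a standard direction $2$-form on $S^2$ — the sole exception being the distinguished edge $i$ in part (1), where $\widetilde W_i=\eta_X$ contributes instead the $1$-form $\eta_X$ restricted to that $S^2$. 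For every edge with at most one endpoint in $B$, $\phi_j(\Gamma)$ on $f(B)(\cdot)$ factors through the ``collapsed'' factor. Therefore the integral over $f(B)(\cdot)$ splits as an iterated integral whose inner integration is over $\breve S_B(\RR^3)$, and it is enough to analyze that inner integral fiberwise.

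I would then run the case analysis on $\sharp B$ exactly as in \cite{Lescop}. If $\sharp B\ge 3$, Kontsevich's vanishing lemma applies fiberwise: according to the shape of the subgraph that $\overline\Gamma$ induces on $B$, the inner integral over $\breve S_B(\RR^3)$ vanishes either for dimension reasons or by one of the standard arguments (a low-valence vertex of $B$ can be eliminated by an integration, or a vertex of $B$ carrying a half-edge that leaves $B$ can be pushed onto the boundary sphere, or the relevant form is exact), so each summand is already zero. If $\sharp B=2$, the dimension count leaves only the configurations in which the two colliding vertices are joined by a single edge $j$ of $\overline\Gamma$; for these the inner integration over $S^2$ of the associated direction form produces a constant independent of the base point (namely $\int_{S^2}\widetilde W_j|_{\rm fiber}$, equal to the standard propagator normalization since $2\widetilde W_j$ is Poincar\'e dual to a cycle that meets each fiber in two points), and the remaining outer integrand is that of the Jacobi diagram obtained by collapsing $j$ to a $4$-valent vertex; summing over the diagrams of $\mathcal E_n$ that yield the same collapsed picture assembles the contributions into IHX combinations, which vanish in $\mathcal A_n(\emptyset)$. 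Part (2) is identical after replacing $X,T^vX,\widetilde W_j$ by $Z,T^vZ,W(\widetilde\beta_j)$; there is no distinguished degree-one edge in that case, so the dimension bookkeeping is a little cleaner, and one uses only that $W(\widetilde\beta_j)$ restricts to the sphere fibers of $ST^vZ$ as a direction form of the same type.

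The step I expect to be the main obstacle is the passage from the Euclidean case over a point to the present bundle setting in the first two steps: unlike there, the forms $\widetilde W_j$ (resp. $W(\widetilde\beta_j)$) are not globally pulled back from $S^2$, since they are only prescribed near $\infty$ and along $STY$, so one must check that on every sphere fiber of $ST^vX$ (resp. $ST^vZ$) they still restrict to a $2$-form of the standard propagator mass representing (half of) the Poincar\'e dual of the two-point set cut out by $\beta_j$, uniformly in the base point. Granting this fiberwise compatibility, Lescop's cancellations — IHX for $\sharp B=2$, Kontsevich's lemma for $\sharp B\ge 3$ — carry over verbatim in each fiber, and integrating the vanishing inner integral over $X$ (resp. over $Z$) gives the lemma.
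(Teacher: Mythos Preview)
Your approach is the paper's approach: both simply defer to Lescop's Lemmas 2.18--2.21 and the proof of Proposition~2.10, and you correctly identify the bundle structure of $f(B)(T^vX)$ and $f(B)(T^vZ)$ and the factorization of the $\phi_j(\Gamma)$ along it. Two points deserve sharpening.

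First, the case split should follow the structure of the subgraph $\Gamma_B$, not the cardinality of $B$. The same three mechanisms --- disconnected $\Gamma_B$ (dimension), a bivalent vertex in $\Gamma_B$ (pairing $\Gamma$ with the graph obtained by swapping the two incident edge labels, together with the fiberwise involution $v\mapsto u+w-v$), and a univalent vertex with no bivalent one (IHX) --- are needed for every $\sharp B\ge 2$, not just for $\sharp B=2$. In particular, for $\sharp B\ge 3$ the summands do \emph{not} vanish individually; the bivalent and univalent cases always cancel only after summing over $\mathcal E_n$, exactly as you describe for $\sharp B=2$.

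Second, the obstacle you flag is smaller than you suggest. Lescop's arguments use only that, for an edge internal to $B$, the map $\phi_j(\Gamma)$ restricted to $f(B)(\cdot)$ factors through the fiberwise direction map $\breve S_B(T^v\cdot)\to S(T^v\cdot)$; this factorization already gives the translation invariance and the symmetries needed, regardless of what $\widetilde W_j$ (or $W(\widetilde\beta_j)$) looks like on a given sphere fiber. In the IHX step the three graphs being compared carry the \emph{same} label on the collapsing edge and hence the same form, so the fiber integrals match without any uniform ``standard mass'' hypothesis.
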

By Lemma~\ref{lem417} and Lemma~\ref{lem418},
\begin{eqnarray*}
&&\widetilde z_n(Y;\vec \gamma)-\widetilde z_n(Y;\vec \gamma')\\
&=&\sum_{\Gamma\in\mathcal E_n}\left(\int_{C_{2n}(Y)}\bigwedge_jP_j(\Gamma)^*\omega(\gamma_j)\right)[\Gamma]
-\sum_{\Gamma\in\mathcal E_n}\left(\int_{C_{2n}(Y)}\bigwedge_jP_j(\Gamma)^*\omega(\gamma'_j)\right)[\Gamma]\\
&=&\sum_{\Gamma\in\mathcal E_n}\left(\int_{F(2n)}\bigwedge_jP_j(\Gamma)^*\widetilde\omega_j\right)[\Gamma].
\end{eqnarray*}
Since $F(2n)=\breve S(T(Y\setminus \infty))$, the restriction of $P_j(\Gamma)$ to $F(2n)$ coincides with $\phi_j^0(\Gamma):\breve S_{2n}(T(Y\setminus \infty))\to S\nu_{\Delta(Y\setminus \infty)}\subset \partial C_2(Y)$.
Therefore
\begin{eqnarray*}
&&\sum_{\Gamma\in\mathcal E_n}\int_{F(2n)}\bigwedge_jP_j(\Gamma)^*\widetilde\omega_j[\Gamma]\\
&=&\sum_{\Gamma\in\mathcal E_n}\int_{\breve S_{2n}(T(Y\setminus \infty))}\bigwedge_j\phi^0_j(\Gamma)^*\widetilde\omega_j[\Gamma]\\
&=&\sum_{\Gamma\in\mathcal E_n}\int_{\breve S_{2n}(T(Y\setminus N(\infty;Y)))}\bigwedge_j\phi^0_j(\Gamma)^*\widetilde\omega_j[\Gamma]
+\sum_{\Gamma\in\mathcal E_n}\int_{\breve S_{2n}(T(N(\infty;Y)\setminus\infty))}\bigwedge_j\phi^0_j(\Gamma)^*\widetilde\omega_j[\Gamma]\\
&=&\sum_{\Gamma\in\mathcal E_n}\int_{\breve S_{2n}(T(Y\setminus N(\infty;Y)))}\bigwedge_j\phi^0_j(\Gamma)^*\widetilde\omega_j[\Gamma].
\end{eqnarray*}
The last equation comes from the following lemma.
\begin{lemma}\label{dimreason}
$\sum_{\Gamma\in\mathcal E_n}\int_{\breve S_{2n}(T(N(\infty;Y)\setminus\infty))}\bigwedge_j\phi^0_j(\Gamma)^*\widetilde\omega_j[\Gamma]=0$.
\end{lemma}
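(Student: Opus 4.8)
The plan is to show that on the region $\breve S_{2n}(T(N(\infty;Y)\setminus\infty))$ every factor $\phi^0_j(\Gamma)^*\widetilde\omega_j$ is, in fact, pulled back from the single fibre $\breve S_{2n}(\RR^3)$; since that fibre has dimension $6n-4$ while the wedge $\bigwedge_j\phi^0_j(\Gamma)^*\widetilde\omega_j$ has degree $1+2(3n-1)=6n-1>6n-4$, the integrand vanishes pointwise, hence each integral, and the whole sum, is $0$.

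First I would set up coordinates near $\infty$. Using $\tau^\infty$ we identify $N(\infty;Y)\setminus\infty$ with $N(\infty;S^3)\setminus\infty\subset\RR^3$, and the standard framing $\tau_{\RR^3}$ trivialises $T(N(\infty;Y)\setminus\infty)\cong(N(\infty;Y)\setminus\infty)\times\RR^3$. This gives a diffeomorphism
$$\breve S_{2n}(T(N(\infty;Y)\setminus\infty))\ \cong\ (N(\infty;Y)\setminus\infty)\times\breve S_{2n}(\RR^3),$$
with projection $\mathrm{pr}_2$ to $\breve S_{2n}(\RR^3)$. Under this identification the map $\phi^0_j(\Gamma)$ (the restriction of $P_j(\Gamma)$ to this boundary piece) sends $(x,f)$ to the point of $S\nu_{\Delta(N(\infty;Y)\setminus\infty)}=STN(\infty;Y)|_{N(\infty;Y)\setminus\infty}$ lying over $x$ with direction $d_j(f):=(f(t(j))-f(s(j)))/\|f(t(j))-f(s(j))\|$, where $d_j\colon\breve S_{2n}(\RR^3)\to S^2$ is the edge-$j$ direction map.

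Next I would locate this piece inside $\partial C_2(Y)$. Since $\partial C_2(Y)\setminus S\nu_{\Delta(Y\setminus N(\infty;Y))}=ST_\infty Y\times(Y\setminus\infty)\cup(Y\setminus\infty)\times ST_\infty Y\cup q^{-1}(N(\infty;Y)^2)$, the bundle $S\nu_{\Delta(N(\infty;Y)\setminus\infty)}$ lies in $\partial C_2(Y)\setminus S\nu_{\Delta(Y\setminus N(\infty;Y))}$. On $q^{-1}((N(\infty;Y)\setminus\infty)^2)$ the map $p_Y$ is the standard direction map $p_c$, whose restriction to the blown-up diagonal is, via $\tau_{\RR^3}$, just the projection $(N(\infty;Y)\setminus\infty)\times S^2\to S^2$. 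Hence $p_Y\circ\phi^0_j(\Gamma)=d_j\circ\mathrm{pr}_2$, which is independent of the $N(\infty;Y)\setminus\infty$-coordinate. Now invoking the defining properties of the propagators: on $S\nu_{\Delta(N(\infty;Y)\setminus\infty)}$ we have $\widetilde\omega_j=\omega(\gamma_j)|_{\cdots}=\tfrac12 p_Y^*\omega_{S^2}^{a_j}$ for $j\ne i$ and $\widetilde\omega_i=\eta|_{\cdots}=p_Y^*\eta_{S^2}$, so
$$\bigwedge_j\phi^0_j(\Gamma)^*\widetilde\omega_j\ =\ \mathrm{pr}_2^*\Bigl(\tfrac1{2^{3n-1}}\,d_i^*\eta_{S^2}\wedge\bigwedge_{j\ne i}d_j^*\omega_{S^2}^{a_j}\Bigr).$$
The form in parentheses is a $(6n-1)$-form on $\breve S_{2n}(\RR^3)$, a manifold of dimension $6n-4$, so it vanishes; therefore so does its pullback and each integral, and summing over $\Gamma\in\mathcal E_n$ yields the claim.

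The step I expect to be the main obstacle is the third one: verifying in Lescop's model of $C_2(Y)$ that the diagonal sphere bundle over the punctured neighbourhood genuinely sits in the locus where $p_Y$ is defined, and that $p_Y$ there is exactly the fibrewise direction map, so that $p_Y\circ\phi^0_j(\Gamma)$ really forgets the base point. The product identification of the configuration space over $N(\infty;Y)\setminus\infty$, the restriction formulas for the propagators, and the final dimension count are then routine.
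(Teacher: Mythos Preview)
Your argument is correct and follows essentially the same route as the paper: trivialise $T(N(\infty;Y)\setminus\infty)$ so that $\breve S_{2n}(T(N(\infty;Y)\setminus\infty))\cong(N(\infty;Y)\setminus\infty)\times\breve S_{2n}(\RR^3)$, observe that on this piece each $\widetilde\omega_j$ is pulled back from $S^2$ via $p_Y$ (the paper writes this as pullback along the framing), so the whole integrand factors through $\breve S_{2n}(\RR^3)$, and conclude by the dimension count $6n-4<6n-1$. Your verification that $S\nu_{\Delta(N(\infty;Y)\setminus\infty)}$ lies in the domain of $p_Y$ and that $p_Y\circ\phi^0_j(\Gamma)=d_j\circ\mathrm{pr}_2$ is more explicit than the paper's statement, but the underlying idea is identical.
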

\begin{proof}
Since $\breve S_{2n}(T(N(\infty;Y)\setminus\infty))=(N(\infty;Y)\setminus\infty)\times \breve S_{2n}(\RR^3)$ and 
$\widetilde\omega_j|_{ST(N(\infty;Y)\setminus \infty)}=\tau_{S^3}^*\omega_{S^2}$ (or $\tau_{S^3}^*\eta_{S^2}$),
the form $\bigwedge_j\phi^0_j(\Gamma)^*\widetilde\omega_j|_{\breve S_{2n}(T(N(\infty;Y)\setminus \infty))}$ is in the image of the map $(\tau_{S^3})^{3n}\circ \prod_j\phi^0_j(\Gamma)$.
The map $(\tau_{S^3})^{3n}\circ \prod_j\phi^0_j(\Gamma)|_{\breve S_{2n}(T(N(\infty;Y)\setminus\infty))}:\breve S_{2n}(T(N(\infty;Y)\setminus\infty))\to (ST(N(\infty;Y)\setminus\infty))^{3n}\to (S^2)^{3n}$
factors through $\breve S_{2n}(\RR^3)$.
Since $\dim\breve S_{2n}(\RR^3)=6n-4<6n-1=\dim \bigwedge_j\phi^0_j(\Gamma)^*\widetilde\omega_j$,
we have  $\sum_{\Gamma\in\mathcal E_n}\int_{\breve S_{2n}(T^vY|_{N(\infty;Y)})}\bigwedge_j\phi^0_j(\Gamma)^*\widetilde\omega_j[\Gamma]=0$.
\end{proof}
On the other hand, by Stokes' theorem,
\begin{eqnarray*}
&&\widetilde z_n^{\rm anomaly}(\vec\gamma)-\widetilde z_n^{\rm anomaly}(\vec\gamma')\\
&=& \sum_{\Gamma\in\mathcal E_n}\int_{S_{2n}(T^vY)}\bigwedge_j\phi_j(\Gamma)^*\widetilde W_j[\Gamma]
+\sum_{\Gamma\in\mathcal E_n}\int_{\partial S_{2n}(T^vX)}\bigwedge_j\phi_j(\Gamma)^*\widetilde{W}_j[\Gamma]\\
&\stackrel{(*)}{=}& \sum_{\Gamma\in\mathcal E_n}\int_{S_{2n}(T^vY)}\bigwedge_j\phi_j(\Gamma)^*\widetilde W_j[\Gamma]\\
&=& \sum_{\Gamma\in\mathcal E_n}\int_{S_{2n}(T(Y\setminus N(\infty;Y))}\bigwedge_j\phi_j(\Gamma)^*\widetilde W_j[\Gamma]+
\sum_{\Gamma\in\mathcal E_n}\int_{S_{2n}(T^vY|_{N(\infty;Y)})}\bigwedge_j\phi_j(\Gamma)^*\widetilde W_j[\Gamma]\\
&=& \sum_{\Gamma\in\mathcal E_n}\int_{S_{2n}(T(Y\setminus N(\infty;Y))}\bigwedge_j\phi_j(\Gamma)^*\widetilde W_j[\Gamma].
\end{eqnarray*}
The equation (*) is given by Lemma~\ref{lem419}{\rm (1)} and 
the last equation comes from the following lemma.
\begin{lemma}
$\sum_{\Gamma\in\mathcal E_n}\int_{S_{2n}(T^vY|_{N(\infty;Y)})}\bigwedge_j\phi_j(\Gamma)^*\widetilde W(\gamma_j')[\Gamma]=0$.
\end{lemma}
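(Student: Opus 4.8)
The plan is to argue exactly as in the proof of Lemma~\ref{dimreason}, but over the compactified configuration bundle $S_{2n}(T^vY|_{N(\infty;Y)})$ in place of $\breve S_{2n}(T(N(\infty;Y)\setminus\infty))$; the only additional point is to check that the dimensional obstruction survives on the boundary faces of the compactification. Here $\widetilde W_j$ denotes the forms introduced before this lemma.

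First I would use the framing $\tau_{S^3}$ to trivialize $T^vY|_{N(\infty;Y)}$. Since $T^vX|_Y=TY$ and $\tau_{S^3}$ is a framing of $TS^3$ defined over $N(\infty;S^3)$, which we identify with $N(\infty;Y)$ via $\tau^\infty$, it gives a trivialization of the rank-$3$ bundle $T^vY|_{N(\infty;Y)}$. By the naturality of Lescop's compactification (see \cite{Lescop} and Section~3) this trivialization induces an identification $S_{2n}(T^vY|_{N(\infty;Y)})\cong N(\infty;Y)\times S_{2n}(\RR^3)$ under which each $\phi_j(\Gamma)$ becomes the composition of the projection $N(\infty;Y)\times S_{2n}(\RR^3)\to S_{2n}(\RR^3)$ with the corresponding canonical map $S_{2n}(\RR^3)\to S(\RR^3)=S^2$; moreover, because the maps $\phi_j(\Gamma)$ are by construction the natural extensions, these commuting squares persist over every face $f(B)(\cdot)$ of $S_{2n}(T^vY|_{N(\infty;Y)})$.

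Next I would invoke the boundary conditions imposed on the forms: on $S(T^vY|_{N(\infty;Y)})$ one has $\widetilde W_j=\frac{1}{2}\tau_{S^3}^*\omega_{S^2}^{a_j}$ for $j\ne i$ and $\widetilde W_i=\tau_{S^3}^*\eta_{S^2}$. Hence every factor $\phi_j(\Gamma)^*\widetilde W_j$ restricted to $S_{2n}(T^vY|_{N(\infty;Y)})$ is pulled back from $S_{2n}(\RR^3)$ along the projection above, so the whole form $\bigwedge_j\phi_j(\Gamma)^*\widetilde W_j$ lies in the image of $\Omega^{6n-1}(S_{2n}(\RR^3))\to\Omega^{6n-1}(S_{2n}(T^vY|_{N(\infty;Y)}))$. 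A dimension count then closes the argument: $\dim S_{2n}(\RR^3)=6n-4$, whereas $\bigwedge_j\phi_j(\Gamma)^*\widetilde W_j$ has degree $2(3n-1)+1=6n-1$, since $\widetilde W_i$ is a $1$-form and the remaining $3n-1$ factors are $2$-forms. Therefore the form on $S_{2n}(\RR^3)$ it comes from vanishes identically, so the integral vanishes for each $\Gamma\in\mathcal E_n$, and summing over $\Gamma$ gives the claim. The only step requiring genuine care is the compatibility of Lescop's compactification with the trivialization over the boundary faces $f(B)(\cdot)$; I expect this to be the main technical point, though it is formal given the definitions in \cite{Lescop}.
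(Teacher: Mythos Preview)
Your proof is correct and follows essentially the same approach as the paper, which simply states that the argument is parallel to Lemma~\ref{dimreason}: trivialize via $\tau_{S^3}$, observe that the $\widetilde W_j$ restrict to pullbacks along $\tau_{S^3}$ on $ST^vY|_{N(\infty;Y)}$, and conclude by the dimension count $\dim S_{2n}(\RR^3)=6n-4<6n-1$. Your added caution about the boundary faces $f(B)(\cdot)$ is in fact unnecessary: once the form is pulled back from a manifold of dimension strictly less than its degree, it is identically zero as a differential form, so its integral vanishes over the whole compactification without any separate boundary analysis.
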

The proof of this lemma is parallel to the proof of Lemma~\ref{dimreason}.

Since $\widetilde W_j|_{S\nu_{\Delta(Y\setminus N(\infty;Y))}}=\widetilde\omega_j|_{S\nu_{\Delta(Y\setminus N(\infty;Y))}}$
 for any $j$, we have 
$$\widetilde z_n(Y;\vec \gamma)-\widetilde z_n(Y;\vec \gamma')=
\sum_{\Gamma\in\mathcal E_n}\int_{\breve S_{2n}(T(Y\setminus N(\infty;Y)))}\bigwedge_j\phi^0_j(\Gamma)^*\widetilde\omega_j[\Gamma]~~~~~~~~~~$$
$$~~~~~~~~~~~~~~~=\sum_{\Gamma\in\mathcal E_n}\int_{S_{2n}(T(Y\setminus N(\infty;Y)))}\bigwedge_j\phi_j(\Gamma)^*\widetilde W_j[\Gamma]
=\widetilde z_n^{\rm anomaly}(\vec\gamma)-\widetilde z_n^{\rm anomaly}(\vec\gamma').$$
Now we finish the proof of Theorem~\ref{mainprop}.
\section{Review of $z_n^{\rm KKT}$.}\label{KKT}
In this section, we review the construction of $z_n^{\rm KKT}$ for rational homology 3-spheres.
This section is based on Lescop \cite{Lescop}.

Let $\tau_Y:T(Y\setminus \infty)\cong \underline{\RR^3}$ be a framing satisfying $\tau_Y|_{N(\infty;Y)\setminus\infty}=\tau_{\RR^3}$.
$\tau_Y|_{Y\setminus N(\infty;Y)}\cup \tau_{S^3}|_{N(\infty;S^3)}$ is a framing of $TY$ by the assumption of $\tau_Y$.
We define
$$\sigma_{Y\setminus\infty}(\tau_Y)=\sigma_Y(\tau_Y|_{Y\setminus N(\infty;Y)}\cup \tau_{S^3}|_{N(\infty;S^3)})-\sigma_{S^3}(\tau_{S^3})$$
$$~~~~=\sigma_Y(\tau_Y|_{Y\setminus N(\infty;Y)}\cup \tau_{S^3}|_{N(\infty;S^3)})-2$$
 and call it the {\it signature defect} of $\tau_Y$ of a framing of
$Y\setminus \infty$. For example $\sigma_{\RR^3}(\tau_{\RR^3})=0$.

The canonical isomorphism $S\nu_{\Delta(Y\setminus \infty)}\cong T(Y\setminus \infty)$ and the framing $\tau_Y$ induce the map 
$p_{\Delta}(\tau_Y):S\nu_{\Delta(Y\setminus \infty)}\to S^2$.
Since the assumption of $\tau_Y$, maps $p_{\Delta}(\tau_Y)$ and $p_Y:\partial C_2(Y)\setminus S\nu\to S^2$ are compatible.
So we get the map $p(\tau_Y)=p_Y\cup p_{\Delta}(\tau_Y):\partial C_2(Y)\to S^2$.
Let $\omega_{S^2}\in\Omega^2(S^2)$ be an anti-symmetric 2-form satisfying $\int_{S^2}\omega_{S^2}=1$.
Let $\omega(\tau_Y)$ be an anti-symmetric closed 2-from on $C_2(Y)$ satisfying 
$\omega(\tau_Y)|_{\partial C_2(Y)}=p(\tau_Y)^*\omega_{S^2}\in \Omega^2(\partial C_2(Y))$.
\begin{prop}[{Lescop \cite[Theorem 1.9 and Proposition 2.11]{Lescop}}]
There exists constants $\delta_n\in\mathcal A_n(\emptyset)$ such that
$$\sum_{\Gamma\in \mathcal E_n}\int_{C_{2n}(Y)}
\left(\bigwedge_iP_i(\Gamma)^*\omega(\tau_Y)\right)[\Gamma]-\frac{\sigma_{Y\setminus \infty}(\tau_Y)}{4}\delta_n\in \mathcal A_n(\emptyset)$$
does not depend on the choice of $\tau_Y$. 
\end{prop}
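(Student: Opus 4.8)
The plan is to run the anomaly argument of Kuperberg--Thurston, in the form given by Lescop \cite{Lescop}. Write $I(\tau_Y)=\sum_{\Gamma\in\mathcal E_n}\bigl(\int_{C_{2n}(Y)}\bigwedge_iP_i(\Gamma)^*\omega(\tau_Y)\bigr)[\Gamma]$. The first step is to check that, for a \emph{fixed} framing $\tau_Y$, the value $I(\tau_Y)$ does not depend on the auxiliary choices of $\omega_{S^2}$ and of the anti-symmetric closed $2$-form $\omega(\tau_Y)$ extending $p(\tau_Y)^*\omega_{S^2}$: two admissible choices differ by $d\eta$ with $\eta|_{\partial C_2(Y)}$ of the form $p(\tau_Y)^*\eta_{S^2}$ (such $\eta$ exists because $H^2(C_2(Y))\cong H^2(\partial C_2(Y))$, $Y$ being a rational homology sphere), so Stokes' theorem rewrites the difference as a sum of boundary integrals over the faces of $\partial C_{2n}(Y)$, and these all vanish exactly as in the proof of Theorem~\ref{mainprop}: the faces $F(\infty;B)$ and $F(B)$ with $B\neq\{1,\dots,2n\}$ contribute $0$ by Lemma~\ref{lem417} and Lemma~\ref{lem418}, while on the principal face $F(2n)=\breve S_{2n}(T(Y\setminus\infty))$ the integrand is pulled back from a space of dimension $6n-4<6n$ (cf. Lemma~\ref{dimreason}) and so integrates to $0$.

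The second step is to compare $I(\tau_Y)$ with $I(\tau_Y')$ for a second admissible framing $\tau_Y'$. These two framings differ by a map $g\colon Y\setminus\infty\to SO(3)$ that is constant near $\infty$; after a homotopy we may assume $g$ is supported in a small ball $B\subset Y\setminus N(\infty;Y)$ where it is a disjoint union of $p$ copies of a fixed generator of $\pi_3(SO(3))\cong\ZZ$ (the part of $g$ detected by $p_1$; lower-order contributions do not affect the real-valued $I$). Picking a homotopy of framings realising $g$, together with a homotopy $\Omega_i$ of propagators on $C_2(Y)\times[0,1]$ (which exists for the same cohomological reason), Stokes' theorem on $C_{2n}(Y)\times[0,1]$ gives $I(\tau_Y')-I(\tau_Y)=\pm\sum_{\Gamma}\bigl(\int_{\partial C_{2n}(Y)\times[0,1]}\bigwedge_iP_i(\Gamma)^*\Omega_i\bigr)[\Gamma]$. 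As before the faces $F(\infty;B')\times[0,1]$ and $F(B')\times[0,1]$ with $B'$ proper contribute $0$ by Lemma~\ref{lem417} and Lemma~\ref{lem418}, and on the principal face $\breve S_{2n}(T(Y\setminus\infty))\times[0,1]$ everything away from the ball $B$ contributes $0$ (there the framing, hence $\Omega_i$, is constant in $t$ and the form is pulled back from $\breve S_{2n}(\RR^3)$, of dimension $6n-4<6n$). The surviving piece is an integral over $\breve S_{2n}(TB)\times[0,1]\cong B\times\breve S_{2n}(\RR^3)\times[0,1]$ of a form built only from $g$ and the universal $\breve S_{2n}(\RR^3)$-structure; since its support is where $g\neq\mathrm{id}$ and $g$ is $p$ disjoint generators, it equals $p\,\delta_n'$ for a universal $\delta_n'\in\mathcal A_n(\emptyset)$ independent of $Y$.

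The third step is the signature defect. By definition $\sigma_{Y\setminus\infty}(\tau_Y)$ is computed from $\Sign X$ and the relative Pontryagin number of $TX$ framed along $\partial X=Y$ by $\tau_Y|_{Y\setminus N(\infty;Y)}\cup\tau_{S^3}|_{N(\infty;S^3)}$; changing the framing of $Y$ by $g$ changes that relative Pontryagin number, hence $\sigma_{Y\setminus\infty}$, by $4p$ (the Hirzebruch normalisation --- this is why the coefficient $1/4$ appears in the statement), so $\tfrac14\sigma_{Y\setminus\infty}(\tau_Y')-\tfrac14\sigma_{Y\setminus\infty}(\tau_Y)=p$. Taking $\delta_n:=\delta_n'$ (after possibly adjusting an overall sign) we get
\[
I(\tau_Y')-\frac{\sigma_{Y\setminus\infty}(\tau_Y')}{4}\,\delta_n
= I(\tau_Y)+p\,\delta_n'-\Bigl(\tfrac14\sigma_{Y\setminus\infty}(\tau_Y)+p\Bigr)\delta_n
= I(\tau_Y)-\frac{\sigma_{Y\setminus\infty}(\tau_Y)}{4}\,\delta_n ,
\]
which is the assertion. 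Since only the existence of $\delta_n$ is claimed, it is enough here that the jump of $\sigma_{Y\setminus\infty}$ is a fixed nonzero multiple of $p$.

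The main obstacle is the second step: verifying that \emph{every} non-principal boundary face of $C_{2n}(Y)\times[0,1]$ contributes nothing --- in particular the IHX-type cancellations summed over $\Gamma\in\mathcal E_n$ on the $F(B')$-faces --- and that the principal-face contribution is genuinely $Y$-independent and additive over the support of the framing change. This is precisely the content of Lescop's Lemmas 2.17--2.21 and Proposition 2.11 and of the anomaly analysis of \cite{KKT}, which I would invoke rather than reprove; the only additional bookkeeping is the precise constant relating $p\in\pi_3(SO(3))$ to $\delta_n'$ and to the jump of $\sigma_{Y\setminus\infty}$, whose exact values are not needed.
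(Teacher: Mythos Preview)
The paper does not supply its own proof of this proposition: it is quoted verbatim as a result of Lescop \cite[Theorem~1.9 and Proposition~2.11]{Lescop} in the review Section~\ref{KKT}, and the surrounding text only records the definition of $z_n^{\rm KKT}$. So there is nothing in the paper to compare your argument against beyond the citation itself.

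That said, your outline is precisely the Kuperberg--Thurston/Lescop anomaly mechanism the citation points to: Stokes on $C_{2n}(Y)$ (for propagator independence) and on $C_{2n}(Y)\times[0,1]$ (for framing independence), with the face-by-face cancellation supplied by Lescop's Lemmas~2.17--2.21 and the surviving principal-face contribution identified with a universal constant times the degree of the framing change, matched against the jump of $\sigma_{Y\setminus\infty}$. As a reconstruction of the cited proof this is accurate.

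One technical caveat on your Step~2: the phrase ``picking a homotopy of framings realising $g$'' is not literally available when $g$ is not null-homotopic (which it is not when $p\neq 0$). What one actually uses is an interpolating closed $2$-form $\Omega$ on $C_2(Y)\times[0,1]$ whose restriction to $ST(Y\setminus\infty)\times[0,1]$ connects $p_\Delta(\tau_Y)^*\omega_{S^2}$ to $p_\Delta(\tau_Y')^*\omega_{S^2}$; this exists because the two boundary forms are cohomologous on $ST(Y\setminus\infty)$, not because the framings themselves are homotopic. With that adjustment your localisation to the ball $B$ goes through. Your parenthetical remark that the $H^1(Y;\ZZ/2)$-part of $g$ ``does not affect the real-valued $I$'' is correct but does need an argument (e.g.\ doubling such a change is null-homotopic, and both $I$ and $\sigma_{Y\setminus\infty}$ take values in torsion-free groups); Lescop handles this implicitly by working with $p_1$ throughout.
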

\begin{defini}[{Kuperberg and Thurston~\cite{KKT}, Lescop~\cite{Lescop}}]
$$z_n^{\rm KKT}(Y;\tau_Y)=\sum_{\Gamma\in \mathcal E_n}\int_{C_{2n}(Y)}
\left(\bigwedge_iP_i(\Gamma)^*\omega(\tau_Y)\right)[\Gamma],$$
$$z_n^{\rm KKT}(Y)=z_n^{\rm KKT}(Y;\tau_Y)-\frac{\sigma_{Y\setminus \infty}(\tau_Y)}{4}\delta_n\in \mathcal A_n(\emptyset).$$
\end{defini}
We remark that $\delta_n$ is given by explicit formula in Proposition 2.10 in \cite{Lescop}.  
\begin{remark}
The universal finite type invariant $Z_n^{\rm KKT}$ described in \cite{Lescop} equals to the degree $n$ part of
$\exp(\sum_n \frac{1}{2^{3n}(3n)!(2n)!}z^{\rm KKT}_n)$.
See before Lemma 2.12 in \cite{Lescop} for more detail.
\end{remark}
\begin{remark}
We will show that $\delta_n=\frac{4}{3}\mu_n$ in Lemma~\ref{Lem74}.
\end{remark}
\section{Review of Watanabe's Morse homotopy invariants $z^{\rm FW}_n$.}\label{Morse}
In this section we give a modified construction of Watanabe's Morse homotopy invariant \cite{Watanabe} $z_{2n,3n}^{\rm FW}$ for rational homology 3-spheres.
We will remark the differences between our modified construction and Watanabe's original construction after the definition of $z_{2n,3n}^{\rm FW}(Y)$.  
The invariant $z^{\rm FW}_{2n,3n}(Y)$ is a sum of the principal term $z_{2n,3n}^{\rm FW}(Y;\vec f)$ and the anomaly term
$z_{2n,3n}^{\rm anomaly}(\vec f)$ of $\vec f$
where $\vec f=(f_1,f_2,\cdots,f_{3n})$ is a family of Morse functions on $Y\setminus \infty$.

Fix a point $a\in S^2$.
\begin{defini}
A Morse function $f:Y\setminus \infty\to\RR$ is an {\it admissible Morse function with respect to} $a$ if it satisfies the following conditions.
\begin{itemize}
\item $f|_{N(\infty;Y)\setminus \infty}=q_a|_{N(\infty;S^3)\setminus \infty}$ and
\item $f$ has no critical point of index $0$ or $3$.
\end{itemize}
\end{defini}
Let $\Crit (f)=\{p_1,\cdots,p_k,q_1,\cdots,q_k\}$ be the set of critical points of $f$ where ${\rm ind}(p_i)=2,{\rm ind}(q_i)=1$.
Let $$0\to C_2(Y\setminus \infty;f)\stackrel{\partial}{\to} C_1(Y\setminus \infty;f)\to 0$$
be the Morse complex of $f$ with rational coefficients.
Let $g:C_1(Y\setminus \infty;f)\to C_2(Y\setminus \infty;f)$, $g([q_i])=\sum_jg_{ij}[p_j]$ be the inverse map of the boundary map 
$\partial :C_2(Y\setminus \infty;f)\to C_1(Y\setminus \infty;f), \partial[p_i]=\sum_j\partial_{ij}[q_j]$.
($g$ is called a combinatorial propagator in \cite{Watanabe}.)

We now construct $\mathcal M(f)$ which is the weighted sum of (non-compact) 4-manifold in $Y^2\setminus \Delta$.
Let $M_{\to}(f)={\rm pr}(\varphi^{-1}(\Delta))$ where $\varphi: Y\times Y\times (0,\infty) \to Y\times Y$ is the map defined by $(x,y)\mapsto (y,\Phi^t_f(x))$
and ${\rm pr}:Y\times Y\times (0,\infty)\to Y\times Y$ is the projection.
We choose the orientation of $M_{\to}(f)$ such that the inclusion $Y\times (0,\ep)\hookrightarrow M_{\to}(f), (x,t)\mapsto (x,\Phi_f^t(x))$
preserves orientations.
We define $$\mathcal M(f)=M_{\to}(f)-\sum_{i,j}g_{ij}(\mathcal A_{q_i}\times \mathcal D_{p_j})\setminus \Delta.$$
We remark that the orientation of $\mathcal M(f)$ does not depend on the choice of orientations of $\mathcal A_{q_i},\mathcal D_{p_j}$.

Let $a_1,\cdots ,a_{3n}\in S^2\subset \RR^3$ be the points such that any different three points of them are linearly independent in $\RR^3$.
Let $f_i:Y\setminus \infty\to \RR$ be a sufficiently generic admissible Morse function with respect to $a_i$ for each $i=1,\cdots,3n$.
We write $\vec f=(f_1,\cdots,f_{3n})$ to simplify notation.
We replace a metric of $Y$ such that the Morse-Smale condition holds for each $f_i$ if necessary.

Set $\mathcal M(\pm f_i)=\mathcal M(f_i)+\mathcal M(-f_i)$.
\begin{defini}\label{defini:pt}
For generic $\vec f$,
$$z_{2n,3n}^{\rm FW}(Y;\vec f)=\sum_{\Gamma\in \mathcal E_n}
\frac{1}{2^{3n}}\sharp\left( \bigcap_{i=1}^{3n}P_i(\Gamma)|_{(Y\setminus \infty)^{2n}\setminus \Delta}^{-1}(\mathcal M(\pm f_i))
\right) [\Gamma]\in\mathcal A_n(\emptyset).$$
\end{defini}
We next define the anomaly part.
Set $\grad~\vec f=(\grad~f_1,\cdots,\grad~f_{3n})$.
\begin{defini}
$$z_{2n,3n}^{\rm anomaly}(\vec f)=\tilde z^{\rm anomaly}_n(\grad~\vec f).$$
\end{defini}
\begin{defini}[{Watanabe~\cite{Watanabe}}]
$$z^{\rm FW}_{2n,3n}(Y)=z^{\rm FW}_{2n,3n}(Y;\vec f)-z_{2n,3n}^{\rm anomaly}(\vec f).$$
\end{defini}
\begin{remark}\label{differ}
A difference between our modified construction of $z_{2n,3n}^{\rm FW}$ and
Watanabe's original construction in \cite{Watanabe} is the conditions for Morse functions. 
Our Morse function is on $Y\setminus \infty$ and explicitly written on $N(\infty;Y)\setminus \infty$. 
On the other hand, Watanabe uses any Morse functions on $Y$.
We note that $Y\setminus\infty\subset Y\sharp S^3$ where $Y\sharp S^3$ is the connected sum of $Y$ and $S^3$ at $\infty\in Y$ and $0\in S^3$.
Then it is possible to extend $f:Y\setminus\infty\to \RR$ to $Y\sharp S^3\cong Y$ in standard way.
\begin{figure}[h]
\begin{center}
\includegraphics[width=240pt]{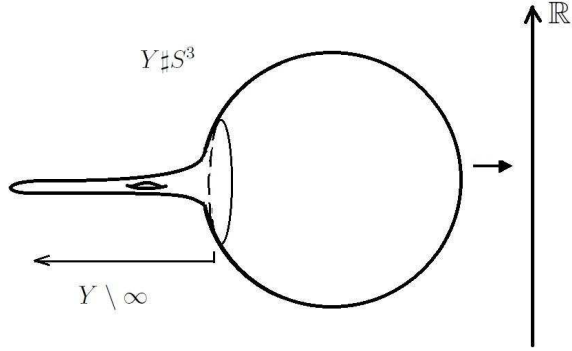} 
\caption{The extension of $f$ to $Y\sharp S^3$}
\end{center}
\end{figure}
Then we can show that the difference between the value $z_{2n,3n}^{\rm FW}(Y)$ described in this Section and
the value of Watanabe's original invariant of $Y$ is a constant which is independent of $Y$.
\end{remark}
We must prove that $\sharp\left( \bigcap_{i}P_i(\Gamma)|_{(Y\setminus \infty)^{2n}\setminus \Delta}^{-1}(\mathcal M(\pm f_i))
\right)$ is well defined for generic $\vec f$, because Morse functions used in the above definition differ from Morse functions used in the original definition in \cite{Watanabe} near $N(\infty;Y)\setminus\infty$ (See Remark~\ref{differ} for more details).
\begin{lemma}
$P_1(\Gamma)|_{(Y\setminus \infty)^{2n}\setminus \Delta}^{-1}(\mathcal M(\pm f_i))$
, $\cdots, P_{3n}(\Gamma)|_{(Y\setminus \infty)^{2n}\setminus \Delta}^{-1}(\mathcal M(\pm f_i))$
transversally intersect at finitely many points, for generic $f_1,\cdots,f_{3n}$ and $a_1,\cdots, a_{3n}$, for any $\Gamma\in \mathcal E_n$.
\end{lemma}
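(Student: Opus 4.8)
The plan is to reduce the statement to a standard transversality argument (à la Watanabe~\cite{Watanabe}, Proposition~4.17) by exhibiting the relevant objects as smooth chains carried by strata of the compactified configuration space, and then to invoke parametric transversality. First I would observe that, for a fixed generic admissible Morse function $f_i$ with respect to $a_i$, the weighted $4$-chain $\mathcal M(\pm f_i)=\mathcal M(f_i)+\mathcal M(-f_i)$ in $(Y\setminus\infty)^2\setminus\Delta$ is a genuine (non-compact) smooth submanifold away from a lower-dimensional set: the open piece $M_{\to}(f_i)$ is the image of the smooth map $(x,t)\mapsto(x,\Phi_{f_i}^t(x))$, which is an immersion of $(Y\setminus\infty)\times(0,\infty)$ for generic metric, and the correction terms $\mathcal A_{q}\times\mathcal D_{p}$ are products of open cells. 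The point of subtracting the combinatorial-propagator terms is precisely that $\partial\mathcal M(\pm f_i)$ is supported on the strata $F(\infty;B)$ and on the diagonal $S\nu_{\Delta}$ (the flow-graph boundary of~\cite{Watanabe}); I would record this as a consequence of the Morse-complex relation $\partial\circ g=\mathrm{id}$ exactly as in Watanabe.

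Next I would push everything into $C_{2n}(Y)$. The preimage $P_i(\Gamma)^{-1}(\mathcal M(\pm f_i))$, restricted to the open stratum $(Y\setminus\infty)^{2n}\setminus\Delta$, is a $(6n-2)$-dimensional chain (codimension~$2$, since $\dim\mathcal M=4$ inside the $6$-manifold $(Y\setminus\infty)^2$), so the expected dimension of the intersection of all $3n$ of them is $6n-2\cdot 3n=0$, i.e.\ a finite point set, which is why the count makes sense. To get \emph{transversality} of these $3n$ preimages simultaneously, I would set up the moduli space of flow graphs: think of the flow lines attached to each edge $\varphi_E^{-1}(i)$ as trajectories of $-\grad f_i$, and consider the evaluation map from the space of such labelled configurations-with-trajectories to $(Y\setminus\infty)^{2n}$. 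The key input is that varying the Morse functions $f_i$ (keeping them admissible, i.e.\ fixed to $q_{a_i}$ near $\infty$) and varying the $a_i$ gives enough freedom to make the total evaluation map a submersion onto a neighbourhood of the diagonal conditions cut out by $\Gamma$; then the Sard–Thom parametric transversality theorem yields a residual set of $(\vec f,\vec a)$ for which the intersection is transverse and hence $0$-dimensional. Compactness of the intersection (finiteness of the point count) then follows because any sequence of intersection points escaping to $\partial C_{2n}(Y)$ would have to converge into one of the faces $F(\infty;B)$ or $F(B)$, and one checks — by the dimension bookkeeping already used in Lemmas~\ref{lem417}, \ref{lem418}, \ref{dimreason} and the analogous flow-graph boundary analysis of Section~8 — that the contributions of $\mathcal M(\pm f_i)$ to these faces drop dimension, so no intersection point can accumulate there; this is where the genericity of $a_1,\dots,a_{3n}$ (any three linearly independent) is used, to rule out coincidences on the vertex-blowup strata $F(B)$.

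The main obstacle I expect is the compactness/boundary-behaviour step rather than the interior transversality: one must verify that on every boundary face of $C_{2n}(Y)$ the intersection of the (closures of the) preimages $\overline{P_i(\Gamma)^{-1}(\mathcal M(\pm f_i))}$ has dimension strictly less than $0$, equivalently is empty, so that the finite point set in the interior is the whole picture. This requires knowing the structure of $\overline{\mathcal M(\pm f_i)}$ near the added strata — its closure involves broken flow lines and the added $\infty$-faces — which is exactly the content deferred to Section~8; I would therefore phrase the proof as: (i) interior transversality by parametric Sard, (ii) no escape to $\partial C_{2n}(Y)$ by invoking the compactification lemmas of Section~8 together with the dimension counts of the type in Lemma~\ref{dimreason}. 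Modulo Section~8 this is routine, so I would keep the write-up short and cite Watanabe's Proposition~4.17 and Lescop's transversality discussion for the details that are literally parallel.
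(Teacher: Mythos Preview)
Your sketch misses the specific difficulty the lemma is there to address. The paper's whole point (see the sentence just before the lemma) is that the admissible Morse functions are \emph{frozen} on $N(\infty;Y)\setminus\infty$: there $f_i=q_{a_i}$ identically, so in that region you have no freedom whatsoever to perturb $f_i$, and Watanabe's parametric transversality (\S2.4 of \cite{Watanabe}, not Proposition~4.17, which concerns the anomaly term) does not apply. Your proposed cure, ``vary $(f_i,a_i)$ and apply Sard--Smale'', glosses over this: when several $x_j$ sit in $N(\infty;Y)\setminus\infty$ the only moving parameters are the finitely many $a_i\in S^2$, and you give no argument that this finite-dimensional wiggle makes the evaluation a submersion.

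The paper does not try to achieve transversality in that region at all; it shows instead that for generic $(a_1,\dots,a_{3n})$ the intersection is \emph{empty} whenever at least one $x_j\in N(\infty;Y)\setminus\infty$. The mechanism is concrete: collapse $Y\setminus N(\infty;Y)$ to the origin in $\RR^3$ via $\pi$, so that the surviving points live in $\RR^3$ and the flow constraints coming from edges with an endpoint outside the compact part become the direction conditions $\dfrac{y_{s(i)}-y_{t(i)}}{\|y_{s(i)}-y_{t(i)}\|}=a_i$ (because $\grad q_{a_i}$ has straight trajectories). One then observes that the resulting map $\varphi:(\RR^3)^{V(\Gamma/B)\setminus\{b_0\}}\to (S^2)^{E(\Gamma/B)}$ is invariant under the dilation $y\mapsto ty$, so any fibre is either empty or at least one-dimensional; since $\dim(\text{source})\le\dim(\text{target})$, the generic fibre is empty. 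This scaling trick is the heart of the proof and is absent from your proposal.

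Two further corrections. First, the genericity of the $a_i$ is used for this near-$\infty$ interior argument, not, as you say, ``to rule out coincidences on the vertex-blowup strata $F(B)$''. Second, once the intersection is confined to $(Y\setminus N(\infty;Y))^{2n}$, the compactness and transversality are inherited directly from Watanabe's compact-manifold setting; the elaborate boundary-face analysis you outline (escape to $F(\infty;B)$ and $F(B)$ via Section~8) is not what is needed here. The ``main obstacle'' is not escape to $\partial C_{2n}(Y)$ but the rigidity of $f_i$ near~$\infty$ in the interior.
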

\begin{proof}
Let $x=(x_1,\cdots,x_{2n})\in \bigcap_{i}P_i(\Gamma)|_{(Y\setminus \infty)^{2n}\setminus \Delta}^{-1}(\mathcal M(\pm f_i))
\subset (Y\setminus \infty)^{2n}\setminus \Delta$.
\vskip2mm
\underline{The case of $x\in (Y\setminus N(\infty;Y))^{2n}$.}\\
Thanks to \S 2.4 of \cite{Watanabe}, the transversality at $x$ is given by generic $\vec f$ .
\vskip2mm
\underline{The case of $x\not\in (Y\setminus N(\infty;Y))^{2n}$.}\\
We show that for generic $a_1,\cdots, a_{3n}$, there are no such $x$. (Then, in particular, $\bigcap_{i}P_i(\Gamma)|_{(Y\setminus \infty)^{2n}\setminus \Delta}^{-1}(\mathcal M(\pm f_i))$ is a $0$-dimensional compact manifold).
Let $B=\{ i\in \{1,\cdots,2n\}\mid x_i\in Y\setminus N(\infty;Y)\}$. Let
$$E_B=\{i\in \{1,\cdots,3n\}\cong E(\Gamma)\mid \{s(i),t(i)\}\subset B\},$$
$$E^{\partial}_B=\{i\in \{1,\cdots,3n\}\cong E(\Gamma)\mid \{s(i),t(i)\}\cap B\not=\emptyset\}\setminus E_B.$$
Let $\Gamma/B$ be the labelled graph obtained from $\Gamma$ by collapsing $B$ to a point $b_0$ and removing all edges in $E_B$.
Here the label of edges and vertices of $\Gamma/B$ are $\{1,\cdots, 3n\}\setminus E_B$, $\{0,1,\cdots,2n\}\setminus B$ respectively (the label of $b_0$ is 0).
Note that $\sharp(V(\Gamma/B)-\{b_0\})=2n-\sharp B$ and $\sharp E(\Gamma/B)\geq 3n-\frac{3\sharp B}{2}$.

Let $\pi:Y\setminus \infty\to Y/(Y\setminus N(\infty;Y))\stackrel{\tau_{\infty}}{=}\RR^3$ be the map obtained by collapsing $Y\setminus N(\infty;Y)$ to the point $0\in\RR^3$.
Let $\pi'_i:\RR\to \RR$ be the map obtained by collapsing ${\rm Im}(f_i:Y\setminus N(\infty;Y)\to \RR)$ to $0\in\RR$.
Then $\pi'_i\circ f_i=q_{a_i}\circ\pi:Y\setminus\infty\to \RR$.
Let $x':V(\Gamma/B)-\{b_0\}\hookrightarrow \RR^3$ be the restriction of $\pi\circ x:V(\Gamma)\hookrightarrow \RR^3$ to $V(\Gamma/B)-\{b_0\}\subset V(\Gamma)$.
Let $a'\in (S^2)^{E(\Gamma/B)}$ be the points obtained from $a=(a_1,\cdots,a_{3n})$ removing all $a_i$, $i\in E_B$.
We define the map
$$\varphi:(\RR^3)^{V(\Gamma/B)-\{b_0\}}\setminus\Delta\to (S^2)^{E(\Gamma/B)}$$
as $$\varphi(y)=\left(\frac{y_{s(i)}-y_{t(i)}}{\| y_{s(i)}-y_{t(i)}\|}\right)_{i\in E(\Gamma/B)}.$$
Here if $i\in E^{\partial}_B$ then either $s(i)$ or $t(i)$ is $0$.
Then $x'\in \varphi^{-1}(a')$.
By the following lemma, there is no $x'$ for a generic $a'$.
Therefore there is no $x$ for a generic $a$.
\begin{lemma}
For a generic $a'$ we have $\varphi^{-1}(a')=\emptyset$.
\end{lemma}
\begin{proof}
For any $y\in\varphi^{-1}(a')$ and for any $t\in (0,\infty)$, we have $ty\in\varphi^{-1}(a')$.
Thus if $\varphi^{^1}(a')\not=\emptyset$, we have $\dim \varphi^{-1}(a')\ge 1$.
On the other hand, $\dim ((\RR^3)^{V(\Gamma/B)-\{b_0\}})=6n-3\sharp B\le 2\sharp E(\Gamma/B)=\dim((S^2)^{E(\Gamma/B)})$.
Hence we have $\dim \varphi^{-1}(a')\le 0$ for a generic $a'$. This is contradiction.
\end{proof}
\end{proof}
\section{Proof of Theorem~\ref{theorem1}.}\label{proof}
In this section we prove Theorem~\ref{theorem1} in Section~\ref{intro}.
\subsection{Proof of $\widetilde{z_n}(Y)=z_n^{\rm KKT}(Y)$.}
We follow the notations used in Section~\ref{KKT}. For example, $Y$ is a rational homology 3-sphere and $\infty\in Y$ is  a base point, and so on.
Let $\tau_Y:T(Y\setminus \infty)\cong \underline{\RR^3}$ be a framing of $Y\setminus \infty$ satisfying $\tau_Y|_{N(\infty;Y)\setminus \infty}=\tau_{\RR^3}|_{N(\infty;S^3)\setminus \infty}$.
We denote $\tau_Y^*\vec a=(\tau_Y^*a,\cdots,\tau_Y^*a)$ for $a\in S^2$.
We take $\omega_{S^2}=\frac{1}{2}\omega_{S^2}^a$ in the definition of $z_n^{\rm KKT}(Y;\tau_Y)$, 
and we take $\omega(\tau_Y^*a)=\omega(\tau_Y)$ in the definition of $\widetilde z_n(Y;\tau_Y^*\vec a)$.
Thus 
$$\widetilde z_n(Y;\tau_Y^*\vec a)=\sum_{\Gamma\in\mathcal E_n}\int_{C_{2n}(Y)}\bigwedge_iP_i(\Gamma)^*\omega(\tau_Y)[\Gamma]
=z_n^{\rm KKT}(Y;\tau_Y).$$
Then we only need show that
$$\widetilde z_n^{\rm anomaly}(Y;\tau_Y^*\vec a)=\frac{1}{4}\sigma_{Y\setminus \infty}(\tau_Y)\delta_n$$
in this condition.    

The idea of the proof of $\widetilde z_n^{\rm anomaly}(Y;\tau_Y^*\vec a)=\frac{1}{4}\sigma_{Y\setminus \infty}(\tau_Y)\delta_n$
is as follows. We first prove this equation in the case of $Y=S^3$.
The well-definedness of $\widetilde z_n^{\rm anomaly}(Y)$ implies that $\widetilde z_n^{\rm anomaly}(S^3;\tau^*\vec a)=\frac{1}{4}\sigma_{\RR^3}(\tau)\delta_n$ for any framing $\tau$ of $S^3\setminus\infty$. 
The general case is reduced to the case of $Y=S^3$ by a cobordism argument.  

We introduce notation.
For a compact 4-manifold $X$ such that $\partial X=Y$ and $\chi(X)=0$,
we denote $\widetilde z^{\rm anomaly}(\vec\gamma;X)=\sum_{\Gamma\in\mathcal E_n}\int_{S_{2n}(T^vX)}\bigwedge_i\phi_i(\Gamma)^*W(\gamma_i)[\Gamma]=\widetilde z_n^{\rm anomaly}(\vec \gamma)+\mu_n\Sign X +c_n$.
Then $\widetilde z^{\rm anomaly}(\vec\gamma)=\widetilde z^{\rm anomaly}(\vec\gamma;X)-\mu_n\Sign X-c_n$ by the definition.
\begin{lemma}
$\widetilde z_n(S^3)=z^{\rm KKT}(S^3)$.
\end{lemma}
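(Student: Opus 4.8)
Since $\widetilde z_n(Y)$ and $z_n^{\mathrm{KKT}}(Y)$ are both topological invariants (Theorem~\ref{mainprop} and the preceding Proposition of Section~\ref{KKT}), it suffices to compare them after fixing \emph{one} convenient piece of extra data. Fix a framing $\tau$ of $S^3\setminus\infty$ with $\tau|_{N(\infty;S^3)\setminus\infty}=\tau_{\RR^3}$, and a single $a\in S^2$, and run the identification already set up at the start of Section~\ref{proof}: take $\omega_{S^2}=\tfrac12\omega_{S^2}^a$ in the definition of $z_n^{\mathrm{KKT}}(S^3;\tau)$ and $\omega(\tau^*a)=\omega(\tau)$ in the definition of $\widetilde z_n(S^3;\tau^*\vec a)$. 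Then $\widetilde z_n(S^3;\tau^*\vec a)=z_n^{\mathrm{KKT}}(S^3;\tau)$ on the nose, so the whole lemma reduces to the anomaly identity
\[
\widetilde z_n^{\rm anomaly}(\tau^*\vec a)=\tfrac14\,\sigma_{\RR^3}(\tau)\,\delta_n .
\]

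First I would compute $\sigma_{\RR^3}(\tau)$. By definition $\sigma_{S^3\setminus\infty}(\tau)=\sigma_{S^3}(\tau|_{S^3\setminus N(\infty)}\cup\tau_{S^3}|_{N(\infty)})-2$, and if one takes $\tau$ itself to agree with $\tau_{S^3}$ away from a small ball then $\sigma_{\RR^3}(\tau)=0$; so the target of the computation for this standard framing is simply $\widetilde z_n^{\rm anomaly}(\tau^*\vec a)=0$. The general framing $\tau'$ of $S^3\setminus\infty$ differs from $\tau$ by a map $S^3\to SO(3)$, i.e.\ by an integer $\deg\in\ZZ\cong\pi_3(SO(3))$, and one knows $\sigma_{\RR^3}(\tau')-\sigma_{\RR^3}(\tau)$ is a fixed linear function of this degree (this is exactly the framing-anomaly bookkeeping recalled in \cite{KM} and used by Lescop~\cite{Lescop}); so once the standard case is settled, the well-definedness statement of Theorem~\ref{mainprop} forces $\widetilde z_n^{\rm anomaly}(\tau'^*\vec a)-\widetilde z_n^{\rm anomaly}(\tau^*\vec a)$ to depend only on $\deg$, and matching the two linear functions of $\deg$ at one nonzero value pins down $\widetilde z_n^{\rm anomaly}(\tau'^*\vec a)=\tfrac14\sigma_{\RR^3}(\tau')\delta_n$ for all $\tau'$.

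The heart of the argument is therefore the vanishing $\widetilde z_n^{\rm anomaly}(\tau^*\vec a)=0$ for the standard framing. Here I would take $X=(T^4\sharp\CCP^2)\setminus B^4$, so $\partial X=S^3$ and $\chi(X)=0$, and unwind the definitions: $\widetilde z_n^{\rm anomaly}(\tau^*\vec a)=-\mu_n\Sign X+\sum_{\Gamma}\int_{S_{2n}(T^vX)}\bigwedge_i\phi_i(\Gamma)^*W(\tau^*a_i)[\Gamma]-c_n$, and by the two lemmas preceding the definition of $c_n$ this expression equals $c_n-c_n=0$ once one checks that with the standard $\tau$ the chosen propagators $W(\tau^*a_i)$ are literally the ones used in defining $c_n$ — i.e.\ that $\tau_{\RR^3}$ (the reference framing over $S^3\setminus\infty$) and $\tau$ give the same integral. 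Concretely, with $\tau|_{N(\infty;Y)\setminus\infty}=\tau_{\RR^3}$ the vector field $\tau^*a_i$ is non-vanishing everywhere on $S^3\setminus\infty$, so $c_0(\beta_i)$ has empty "zero-locus part'', $c_0(\beta_i)\cap S\nu_\Delta=c_0(\gamma_i)$ is a trivial section, and the form $\bigwedge_i\phi_i(\Gamma)^*W(\tau^*a_i)$ factors (over $S_{2n}(T^vX)$) through $S_{2n}(\underline{\RR^3})$ in exactly the way exploited in Lemma~\ref{dimreason} and in the last Lemma of Section~\ref{anomaly term}; a dimension count then kills every $F(B)$-type boundary contribution and identifies the bulk integral with $c_n+\mu_n\Sign X$.

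\textbf{Main obstacle.} The delicate point is keeping the orientation and symmetrization conventions straight through the chain $c(\gamma)\rightsquigarrow\omega_\partial(\gamma)\rightsquigarrow\omega(\gamma)$ and its four-dimensional analogue $c_0(\beta_i)\rightsquigarrow W(\gamma_i)$, so that the factor $\tfrac14$ (rather than $\tfrac12$ or $1$) and the identification with Lescop's $\delta_n$ come out correctly; equivalently, one must be sure the normalization $2\omega_\partial(\gamma)=\mathrm{PD}[c(\gamma)]$ and the $\iota$-antisymmetry interact with the universal constant $\delta_n$ of \cite[Prop.\ 2.10]{Lescop} exactly as claimed. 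I expect the cleanest route is to reduce entirely to the non-vanishing-vector-field (framed) situation, where Lescop's own computation of $\delta_n$ applies verbatim, and then invoke the already-established well-definedness of $\widetilde z_n^{\rm anomaly}$ to propagate the equality from one framing to all of them; the cobordism argument promised for general $Y$ (replacing $S^3$ by $Y$ via a cobordism with $\Sign=0$, $\chi=0$, as in the proof of Lemma~\ref{lem48}) is then a formal consequence and is not needed for this lemma.
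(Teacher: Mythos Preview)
Your core idea is right and coincides with the paper's: use the standard framing $\tau_{\RR^3}$ on $S^3\setminus\infty$, invoke the identification $\widetilde z_n(S^3;\tau_{\RR^3}^*\vec a)=z_n^{\rm KKT}(S^3;\tau_{\RR^3})$ from the start of Section~\ref{proof}, and then observe that $\widetilde z_n^{\rm anomaly}(\tau_{\RR^3}^*\vec a)=0$ while $\sigma_{\RR^3}(\tau_{\RR^3})=0$. That finishes the lemma in three lines. The vanishing $\widetilde z_n^{\rm anomaly}(\tau_{\RR^3}^*\vec a)=0$ is, as you say, a tautology: by the very definition of $c_n$ one has $-\mu_n\Sign X+\sum_\Gamma\int_{S_{2n}(T^vX)}\bigwedge_i\phi_i(\Gamma)^*W(\tau_{\RR^3}^*a_i)[\Gamma]=c_n$, so the anomaly is $c_n-c_n=0$.

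However, most of your proposal is not needed for this lemma and some of it is misplaced. The discussion of a general framing $\tau'$ and ``matching the two linear functions of $\deg$'' is precisely the content of the \emph{next} result (Corollary~\ref{Lem711}), which the paper deduces \emph{from} the present lemma plus the invariance of $\widetilde z_n$; you are trying to prove the corollary inside the lemma. Likewise your ``Main obstacle'' about the factor $\tfrac14$ and the identification with Lescop's $\delta_n$ is irrelevant here: for $\tau_{\RR^3}$ both sides of the anomaly identity are zero regardless of any normalization, and the actual pinning down of $\mu_n=\tfrac34\delta_n$ and $c_n=\tfrac12\delta_n$ is done later (Lemmas~\ref{Lem74},~\ref{Lem75}). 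Finally, the remark that ``$c_0(\beta_i)$ has empty zero-locus part'' is confused: $\beta_i$ lives on the $4$-manifold $X$, not on $S^3$, and generically has zeros; the vanishing of the anomaly does not come from emptiness of $c_0(\beta_i)$ but purely from the tautology above. Strip the proposal down to the three-line argument and it is exactly the paper's proof.
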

\begin{proof}
Let $X$ be a compact 4-manifold with $\partial X=S^3$ and $\chi(X)=0$.
\begin{eqnarray*}
\widetilde z_n(S^3)&=&\widetilde z_n(S^3;\tau_{\RR^3}^*\vec a)-\widetilde z_n^{\rm anomaly}(\tau_{\RR^3}^*\vec a;X)+\mu_n\Sign X+c_n\\
&=&\widetilde z_n(S^3;\tau_{\RR^3}^*\vec a)\\
&=&z_n^{\rm KKT}(S^3;\tau_{\RR^3}).
\end{eqnarray*}
Since $\sigma_{\RR^3}(\tau_{\RR^3})=0$,
we have $z_n^{\rm KKT}(S^3;\tau_{\RR^3})=z_n^{\rm KKT}(S^3)$.

Therefore $\widetilde z_n(S^3)=z_n^{\rm KKT}(S^3;\tau_{\RR^3})=z_n^{\rm KKT}(S^3)$.
\end{proof}
Since $\widetilde z_n^{\rm anomaly}(S^3)$ is independent of the choice of framing on $\RR^3=S^3\setminus \infty$,
we have the following corollary. 
\begin{cor}\label{Lem711}
For any framing $\tau$ on $\RR^3=S^3\setminus\infty$ such that $\tau|_{N(\infty;S^3)\setminus \infty}=\tau_{\RR^3}|_{N(\infty;S^3)\setminus \infty}$,
the equation $\widetilde z_n^{\rm anomaly}(S^3;\tau^*\vec a)=\frac{1}{4}\sigma_{\RR^3}(\tau)\delta_n$ holds.
\end{cor}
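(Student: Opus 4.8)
The plan is to read off the formula formally, by feeding the preceding lemma $\widetilde z_n(S^3)=z_n^{\rm KKT}(S^3)$ into the two ways of writing an invariant as a principal term minus a correction term. Fix a framing $\tau$ of $\RR^3=S^3\setminus\infty$ with $\tau|_{N(\infty;S^3)\setminus\infty}=\tau_{\RR^3}|_{N(\infty;S^3)\setminus\infty}$ and put $\tau^*\vec a=(\tau^*a,\cdots,\tau^*a)$, a family of admissible vector fields.

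First I would record the identity $\widetilde z_n(S^3;\tau^*\vec a)=z_n^{\rm KKT}(S^3;\tau)$. This is exactly the computation performed at the start of this section for a general rational homology $3$-sphere: once one takes $\omega_{S^2}=\tfrac{1}{2}\omega_{S^2}^a$ and chooses the propagator with respect to $\tau^*a$ to be the Kuperberg--Thurston propagator $\omega(\tau)$, the configuration-space integrals defining the two sides agree term by term. Next, Theorem~\ref{mainprop} gives $\widetilde z_n(S^3)=\widetilde z_n(S^3;\tau^*\vec a)-\widetilde z_n^{\rm anomaly}(S^3;\tau^*\vec a)$, the anomaly term being computed with a fixed $4$-manifold bounding $S^3$. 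On the other hand, the definition of the Kuperberg--Thurston invariant gives $z_n^{\rm KKT}(S^3)=z_n^{\rm KKT}(S^3;\tau)-\tfrac{1}{4}\sigma_{\RR^3}(\tau)\delta_n$, since $\sigma_{\RR^3}(\tau)$ is the signature defect of the framing $\tau$ of $S^3\setminus\infty$.

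Then I would use $\widetilde z_n(S^3)=z_n^{\rm KKT}(S^3)$ to equate the two right-hand sides:
\[ z_n^{\rm KKT}(S^3;\tau)-\widetilde z_n^{\rm anomaly}(S^3;\tau^*\vec a)=z_n^{\rm KKT}(S^3;\tau)-\tfrac{1}{4}\sigma_{\RR^3}(\tau)\delta_n, \]
and cancel $z_n^{\rm KKT}(S^3;\tau)$ to obtain $\widetilde z_n^{\rm anomaly}(S^3;\tau^*\vec a)=\tfrac{1}{4}\sigma_{\RR^3}(\tau)\delta_n$, as asserted.

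I do not expect a genuine obstacle: the real content --- well-definedness of $\widetilde z_n$, the vanishing of the correction terms $\mu_n\Sign X$ and $c_n$ for $S^3$ inside the preceding lemma, and $\widetilde z_n(S^3)=z_n^{\rm KKT}(S^3)$ --- has already been done. The one point needing a line of care is the first identity, namely that the admissible propagator attached to $\tau^*a$ may indeed be chosen equal to $\omega(\tau)$; equivalently, that the boundary cycle $c(\tau^*a)=p_Y^{-1}(\pm a)\cup c_0(\tau^*a)$ equals $p(\tau)^{-1}(\pm a)$ after normalizing $\omega_{S^2}=\tfrac{1}{2}\omega_{S^2}^a$, using that $\tau^*a$ is non-vanishing so that $c_0(\tau^*a)$ receives no contribution from a zero locus and is precisely $p_{\Delta}(\tau)^{-1}(\pm a)$. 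This comparison is already spelled out at the start of the section, so I would simply cite it.
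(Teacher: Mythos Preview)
Your proof is correct and is essentially the same argument the paper intends: combine $\widetilde z_n(S^3;\tau^*\vec a)=z_n^{\rm KKT}(S^3;\tau)$, the two decompositions $\widetilde z_n(S^3)=\widetilde z_n(S^3;\tau^*\vec a)-\widetilde z_n^{\rm anomaly}(\tau^*\vec a)$ and $z_n^{\rm KKT}(S^3)=z_n^{\rm KKT}(S^3;\tau)-\tfrac14\sigma_{\RR^3}(\tau)\delta_n$, and the preceding lemma $\widetilde z_n(S^3)=z_n^{\rm KKT}(S^3)$. The paper compresses this into a single sentence (invoking the framing-independence of $\widetilde z_n(S^3)$), whereas you have written it out in full; there is no difference in substance.
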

Recall that the framing $\tau_Y$ of $T(Y\setminus \infty)$ gives the framing $\tau_Y\cup \tau_{S^3}=\tau_Y|_{Y\setminus N(\infty;Y)}\cup \tau_{S^3}|_{N(\infty;S^3)}$ of $TY$ and $\sigma_{Y\setminus\infty}(\tau_Y)=\sigma_{Y}(\tau_Y\cup\tau_{S^3})-\sigma(\tau_{S^3})
=\sigma_{Y}(\tau_Y\cup \tau_{S^3})-2$.
We give the spin structure on $Y$ using $\tau_Y\cup \tau_{S^3}$.
\begin{lemma}
There exists a positive integer $k$ and a spin 4-manifold $X_0$ such that $\chi(X_0)=0$ and $\partial X_0=Y\sqcup k(-S^3)$ as spin manifolds.
Here $-S^3$ is $S^3$ with the opposite orientation.  
\end{lemma}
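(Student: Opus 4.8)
The plan is to produce $X_0$ by starting from \emph{any} compact spin $4$–manifold bounding $Y$ and then repairing its Euler characteristic with two elementary interior surgeries, in the same spirit as the construction of $Z=Z_0\sharp Z_0$ in the proof of Lemma~\ref{lem48}. The one genuinely external input is the vanishing of the $3$–dimensional spin cobordism group, $\Omega_3^{\rm Spin}=0$ (see, e.g., Kirby--Melvin~\cite{KM}). Since the framing $\tau_Y\cup\tau_{S^3}$ equips $Y$ with a spin structure, $[Y]=0\in\Omega_3^{\rm Spin}$, so there is a compact spin $4$–manifold $X'$ with $\partial X'=Y$ as spin manifolds. (If one wants $X'$ connected one may tube its components together with $1$–handles, which is compatible with the spin structure; this is not needed for the statement, and the resulting change in $\chi$ is corrected below.)

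Next I would use two moves on the interior of a spin $4$–manifold $W$. Move (i): removing an open $4$–ball from $\mathrm{int}\,W$ yields a spin $4$–manifold with $\chi(W\setminus\mathring B^4)=\chi(W)-1$ and $\partial(W\setminus\mathring B^4)=\partial W\sqcup(-S^3)$, the new boundary sphere carrying the unique spin structure on $S^3$ and, by the outward–normal–first convention, the orientation opposite to $\partial B^4$. Move (ii): taking an interior connected sum with $S^2\times S^2$ (which is spin with $\chi=4$) yields a spin $4$–manifold with Euler characteristic increased by $2$ and with the same boundary, since $\chi(W\sharp(S^2\times S^2))=\chi(W)+4-2$. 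Applying (ii) to $X'$ a suitable number $m\ge 0$ of times produces a spin $4$–manifold $X''$ with $\partial X''=Y$ and $\chi(X'')=k$ for some integer $k\ge 1$ (choose $m$ large enough that $\chi(X')+2m\ge 1$; if this lands on $\chi=0$, do one further (ii) so that $k\ge 2$). Finally, removing $k$ pairwise disjoint open $4$–balls from $\mathrm{int}\,X''$ via $k$ applications of (i) gives the desired $X_0$: it is spin, $\chi(X_0)=\chi(X'')-k=0$, and $\partial X_0=Y\sqcup k(-S^3)$ as spin manifolds.

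The only points needing (minor) care are the orientation bookkeeping in move (i) — verifying that the created boundary spheres really appear as $-S^3$ and not $S^3$, and that the induced spin structure is the standard one — and the trivial parity/positivity check that one can always reach $\chi=0$ with a strictly positive number $k$ of removed balls. Neither presents a real obstacle; the substantive content is entirely contained in $\Omega_3^{\rm Spin}=0$.
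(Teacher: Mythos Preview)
Your proposal is correct and follows essentially the same route as the paper: start from a spin filling of $Y$ (using $\Omega_3^{\rm Spin}=0$), raise its Euler characteristic by interior connected sums with a closed spin $4$--manifold, then remove the appropriate number of open balls. The paper uses $K3$ where you use $S^2\times S^2$, an immaterial difference; you are slightly more careful than the paper in ensuring $k\ge 1$ rather than $k\ge 0$, which the paper only addresses in the subsequent remark.
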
 
\begin{proof}
Since the 3-dimensional spin cobordism group equals to zero, there exists a spin 4-manifold $\widetilde X$ such that $\partial \widetilde X=Y$.
Let $k=\chi(\widetilde X)$.
We may assume that $k\ge 0$, by replacing $\widetilde X$ by $\widetilde X\sharp nK3$ for sufficiently large integer $n$ if necessary.
Let $X_0$ be the spin 4-manifold obtained by removing $k$ disjoint 4-balls, i.e., $X_0=\widetilde X\setminus kB^4$.
Then $\chi (X_0)=0$ and $\partial X_0=Y\sqcup k(-S^3)$.
\end{proof}
\begin{remark}
Since $\chi(X_0\sharp T^4)=\chi(X_0)-2, \chi(X_0\sharp K3)=\chi(X_0)+22$ and $T^4$, $K3$ are spin,
it is possible to choose $k+2n$ instead of $k$ for any $n\in\ZZ$.
\end{remark}
\begin{remark}
Since the Euler number of a closed spin 4-manifold is even, the number $k(Y)=k \mod 2\in\ZZ/2$ is an invariant of a spin 3-manifold $Y$.
It is known that $k(Y)={\rm rk} H_1(Y;\ZZ/2)+1$ (See Theorem 2.6 in \cite{KM}).
We also remark that $k(Y)\equiv \sigma_{Y\setminus\infty}(\tau_Y)+1 \mod 2$.
\end{remark}
Let $X_0$ be a spin 4-manifold such that $\chi(X_0)=0$ and $\partial X_0=Y\sqcup k(-S^3)$ for some $k\ge 1$.
We denote $S^3_i$ the $i$-th $S^3$-boundary of $X_0$.
Then $\partial X_0=Y\sqcup -S^3_1\sqcup\cdots\sqcup -S^3_k$.
By the obstruction theory, it is possible to extend the framing $\eta_Y\oplus(\tau_Y\cup\tau_{S^3})$ of $TX_0|_{Y}$ to $X_0$
 where $\eta_Y$ is the outward unit vector field on $Y\subset \partial X_0$
(see \cite{KM} for more details).
We choose such a extended framing $\widetilde\tau_X$ such that $\widetilde\tau_X^*\null^t(1,0,0,0)|_{k(-S^3)}$ is the inward unit vector field on $k(-S^3)\subset \partial X_0\subset X_0$.
If necessary we modify $\widetilde\tau_X$ by using homotopy, we may assume that there exists a framing $\tau_i$ of $S^3_i\setminus \infty$ 
such that $\tau_i|_{N(\infty;S^3_i)\setminus \infty}=\tau_{\RR^3}|_{N(\infty;S^3)\setminus \infty}$ and $-\eta_i\oplus(\tau_i\cup \tau_{S^3})=\widetilde\tau_X|_{-S^3_i}$. Here $-\eta_i$ is the inward unit vector field on $-S^3_i\subset X_0$.

Let $X'$ be a compact oriented 4-manifold with $\chi(X')=0$ and $\partial X'=S^3$. 
Then $X_0\cup kX'$ is a compact 4-manifold with $\chi(X_0\cup kX')=0$ and $\partial(X_0\cup kX')=Y$.
\begin{lemma}\label{Lem73}
The following three equations hold.
\begin{itemize}
\item[{\rm (1)}] $\widetilde z^{\rm anomaly}_n(\tau_Y^*\vec a;X_0\cup kX')
=\sum_{i=1}^k\widetilde z^{\rm anomaly}_n(\tau_i^*\vec a;X')$.
\item[{\rm (2)}] $\sigma_{Y\setminus \infty}(\tau_Y)=\sum_{i=1}^k\sigma_{\RR^3}(\tau_i)+2(k-1)-3\Sign X_0$.
\item[{\rm (3)}] $\widetilde z^{\rm anomaly}_n(\tau_Y^*\vec a)=\frac{1}{4}\sigma_{Y\setminus \infty}(\tau_Y)\delta_n+\left(\frac{3}{4}\delta_n-\mu_n\right)\Sign X_0+\frac{k-1}{2}\delta_n+(k-1)c_n$.
\end{itemize}
\end{lemma}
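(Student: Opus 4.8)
The plan is to prove (1) and (2) separately and then deduce (3) from them by a purely formal manipulation using the identity $\widetilde z_n^{\rm anomaly}(\vec\gamma;X)=\widetilde z_n^{\rm anomaly}(\vec\gamma)+\mu_n\Sign X+c_n$, Corollary~\ref{Lem711}, and Novikov additivity of the signature.

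\emph{Step (1).} Recall that $S_{2n}(T^vX)\to X$ is a fibre bundle whose fibre is the compactified $S_{2n}(\RR^3)$, so for $X=X_0\cup kX'$ (the pieces meeting along the measure-zero $3$-spheres $S^3_i$) the defining integral of $\widetilde z_n^{\rm anomaly}(\tau_Y^*\vec a;X)$ splits as $\int_{S_{2n}(T^vX_0)}+\sum_{i=1}^k\int_{S_{2n}(T^vX'_i)}$. I would first make all the auxiliary data compatible across the gluing: take $\eta_X$ to be the first vector of the framing $\widetilde\tau_X$ of $TX_0$ constructed before the lemma, so that $\widetilde\tau_X$ trivialises $T^vX_0$; take $\beta_i|_{X_0}$ to be the corresponding constant section $\widetilde\tau_X^*a_i$; and take the propagator $W(\tau_Y^*a_i)$ to equal $\tfrac12\widetilde\tau_X^*\omega_{S^2}^{a_i}$ on $ST^vX_0$ and a genuine propagator $W(\tau_i^*a_i)$ on each $ST^vX'_i$. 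These agree on the interfaces $STS^3_i$ because $\widetilde\tau_X|_{-S^3_i}=-\eta_i\oplus(\tau_i\cup\tau_{S^3})$, which is exactly the boundary normalisation required of $W(\tau_i^*a_i)$. The $X_0$-contribution then vanishes by the dimension argument already used for Lemma~\ref{dimreason} (and in establishing that $c_n$ is well defined): via $\widetilde\tau_X$ the $6n$-form $\bigwedge_i\phi_i(\Gamma)^*W(\tau_Y^*a_i)$ restricted to $S_{2n}(T^vX_0)\cong X_0\times S_{2n}(\RR^3)$ is pulled back from $S_{2n}(\RR^3)$, which has dimension $6n-4<6n$. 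The remaining $k$ terms are $\widetilde z_n^{\rm anomaly}(\tau_i^*\vec a;X')$ by definition, which gives (1).

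\emph{Step (2).} Here $X_0$ is a framed cobordism from $Y$, framed by $\eta_Y\oplus(\tau_Y\cup\tau_{S^3})$, to the $-S^3_i$, framed by $-\eta_i\oplus(\tau_i\cup\tau_{S^3})$, and $\widetilde\tau_X$ is a trivialisation of $TX_0$ extending these boundary framings, so its relative Pontryagin number vanishes. The signature defect is additive over such a framed cobordism with correction $-3\Sign$ (the Hirzebruch/APS signature-defect formula, in the form used by Kirby--Melvin~\cite{KM}), i.e.\ $\sigma_Y(\tau_Y\cup\tau_{S^3})+\sum_i\sigma_{-S^3_i}(\tau_i\cup\tau_{S^3})=-3\Sign X_0$; and since the signature defect changes sign under orientation reversal of the $3$-manifold, $\sigma_{-S^3_i}(\tau_i\cup\tau_{S^3})=-\sigma_{S^3_i}(\tau_i\cup\tau_{S^3})$. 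Substituting the definitions $\sigma_{Y\setminus\infty}(\tau_Y)=\sigma_Y(\tau_Y\cup\tau_{S^3})-2$ and $\sigma_{\RR^3}(\tau_i)=\sigma_{S^3_i}(\tau_i\cup\tau_{S^3})-2$ and collecting the resulting constant $2k-2=2(k-1)$ rearranges this to (2).

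\emph{Step (3).} Write $\widetilde z_n^{\rm anomaly}(\tau_Y^*\vec a)=\widetilde z_n^{\rm anomaly}(\tau_Y^*\vec a;X_0\cup kX')-\mu_n\Sign(X_0\cup kX')-c_n$, apply (1), expand each $\widetilde z_n^{\rm anomaly}(\tau_i^*\vec a;X')=\widetilde z_n^{\rm anomaly}(\tau_i^*\vec a)+\mu_n\Sign X'+c_n$, replace $\widetilde z_n^{\rm anomaly}(\tau_i^*\vec a)$ by $\tfrac14\sigma_{\RR^3}(\tau_i)\delta_n$ via Corollary~\ref{Lem711}, cancel the $\Sign X'$ terms using $\Sign(X_0\cup kX')=\Sign X_0+k\Sign X'$, and finally substitute (2) for $\sum_i\sigma_{\RR^3}(\tau_i)$; collecting terms yields (3). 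The main obstacle is Step (1): one must check that the compactified configuration-space bundle over the glued manifold-with-corners $X_0\cup kX'$ behaves as expected, that the propagators $W(\gamma_i)$ can genuinely be chosen so as to match on the interfaces $STS^3_i$ with the prescribed normalisations, and that the only surviving contribution is the $X'$-part, the $X_0$-part being killed by the framing together with the dimension count. Steps (2) and (3) are then respectively a standard computation with the signature defect and a purely formal bookkeeping.
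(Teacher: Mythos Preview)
Your proposal is correct and follows essentially the same approach as the paper: for (1) you trivialise $T^vX_0$ via $\widetilde\tau_X$, choose $\beta_i|_{X_0}=\widetilde\tau_X^*a_i$ and $W(\tau_Y^*a_i)|_{ST^vX_0}=\tfrac12\widetilde\tau_X^*\omega_{S^2}^{a_i}$, split the integral, and kill the $X_0$-piece by the dimension count $\dim S_{2n}(\RR^3)=6n-4<6n$; for (2) you use the signature-defect/obstruction-theory identity $\sigma_Y(\tau_Y\cup\tau_{S^3})+3\Sign X_0=\sum_i\sigma_{S^3}(\tau_i\cup\tau_{S^3})$ together with the definitions of $\sigma_{Y\setminus\infty}$ and $\sigma_{\RR^3}$; and (3) is the same formal bookkeeping via Corollary~\ref{Lem711} and Novikov additivity. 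The paper's proof is exactly this, with the same choices and the same dimension argument.
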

\begin{proof}
{\rm (1)} We take a 3-bundle $T^v(X_0\sqcup kX')\subset T(X_0\sqcup kX')$ over $X_0\cup kX'$ such that $T^v(X_0\sqcup kX')|_{X_0}$ is the normal bundle
of $\widetilde\tau_X^*\null^t(1,0,0,0)$.
We denote $T^vX_0=T^v(X_0\sqcup kX')|_{X_0}$, $T^v(kX')=T^v(X_0\sqcup kX')|_{kX'}$.
Let $\beta$ be a section of $T^v(X_0\sqcup kX')$ such that $\beta|_{X_0}=\widetilde\tau_X^*a$ and $\beta$ is transverse to the zero section in $T^v(X_0\sqcup kX')$.
In this setting, we can take $W(\tau_Y^*a)|_{ST^vX_0}=\widetilde\tau_X^*\omega_{S^2}$. 
Then $\widetilde z_n^{\rm anomaly}(\tau_Y^*\vec a;X_0\sqcup kX')=
\sum_{\Gamma\in\mathcal E_n}\int_{S_{2n}(T^v(X_0\sqcup kX'))}\bigwedge_i \phi_i(\Gamma)^*W(\tau_Y^*a)[\Gamma]
=\sum_{\Gamma}\int_{S_{2n}(T^vX_0)}\bigwedge_i\phi_i(\Gamma)^*\widetilde\tau_X^*\omega_{S^2}[\Gamma]\\
+\sum_{\Gamma}\int_{S_{2n}(T^v(kX'))}\bigwedge_i \phi_i(\Gamma)^*W(\tau_Y^*a)[\Gamma]$.

We show that $\int_{S_{2n}(T^vX_0)}\bigwedge_i \phi_i(\Gamma)^*\widetilde\tau_X^*\omega_{S^2}=0$ for any $\Gamma\in\mathcal E_n$.
The map $(\widetilde\tau_X)^{3n}\circ(\prod_i \phi_i(\Gamma)):S_{2n}(T^vX_0)\to (S^2)^{3n}$ factors through 
$S_{2n}(\RR^3)$:
\[\xymatrix{S_{2n}(T^vX_0) \ar[d]_{\widetilde\tau_X} \ar[r]^(0.5){\prod_i\phi_i(\Gamma)} \ar@{}[dr]^(0.4)\circlearrowleft & (ST^vX_0)^{3n} \ar[d]_{(\widetilde\tau_X)^{3n}}  & \\
S_{2n}(\RR^3) \ar[r] & (S^2)^{3n}. &\\
}\]\
Hence we have $\bigwedge_i\phi_i(\Gamma)^*\widetilde\tau_X^*\omega_{S^2}|_{ST^vX_0}=((\prod \widetilde\tau_X)^{3n}\circ \bigwedge_i\phi_i(\Gamma))^*(\omega_{S^2})^{3n}\\
\in {\rm Im}(\Omega^{6n}(S_{2n}(\RR^3))\to\Omega^{6n}(S_{2n}(T^vX_0)))$.
Since $\dim \breve S_{2n}(\RR^3)=6n-4<6n=\dim \bigwedge_i\phi_i(\Gamma)^*\widetilde\tau_X^*\omega_{S^2}$, we have
$\bigwedge_i \phi_i(\Gamma)^*\widetilde\tau_X^*\omega_{S^2}=0$.

Therefore 
\begin{eqnarray*}
\widetilde z_n^{\rm anomaly}(\tau_Y^*\vec a;X_0\sqcup kX')
&=& \sum_{\Gamma\in\mathcal E_n}\int_{S_{2n}(T^v kX')}\bigwedge_i \phi_i(\Gamma)^*W(\tau_Y^*a)[\Gamma]\\
&=&\sum_{i=1}^k\widetilde z_n^{\rm anomaly}(\tau_i^*\vec a;X').
\end{eqnarray*}

{\rm (2)} By the obstruction theory and the definition of the signature defect, we have 
$\sigma_Y(\tau_Y\cup\tau_{S^3})+3\Sign X_0=\sum_{i=1}^k\sigma_{S^3}(\tau_i\cup \tau_{S^3})$.
Since $\sigma_{Y\setminus \infty}(\tau_Y)=\sigma_Y(\tau_Y\cup\tau_{S^3})-2$ and
$\sigma_{\RR^3}(\tau_i)=\sigma_{S^3}(\tau_i\cup\tau_{S^3})-2$, the equation $(2)$ holds.

{\rm (3)} 
\begin{eqnarray*}
\widetilde z_n^{\rm anomaly}(\tau_Y^*\vec a)&&=
\widetilde z_n^{\rm anomaly}(\tau_Y^*\vec a;X_0\sqcup kX')-\mu_n\Sign (X_0\sqcup kX')-c_n\\
&&\stackrel{{\rm (1)}}{=}\widetilde z_n^{\rm anomaly}(\tau_1^*\vec a;X')-\mu_n\Sign X'-c_n\\
&&+\cdots+\widetilde z_n^{\rm anomaly}(\tau_k^*\vec a;X')-\mu_n\Sign X'-c_n\\
&&-\mu_n\Sign X_0+(k-1)c_n\\
&&=\sum_i\widetilde z_n^{\rm anomaly}(\tau_i^*\vec a)-\mu_n\Sign X_0+(k-1)c_n\\
&&\stackrel{{\rm Corollary}~\ref{Lem711}}{=}\sum_i\frac{1}{4}\sigma_{\RR^3}(\tau_i)\delta_n-\mu_n\Sign X_0+(k-1)c_n\\
&&\stackrel{{\rm (2)}}{=}\frac{1}{4}(\sigma_{Y\setminus\infty}(\tau_Y)-2(k-1)+3\Sign X_0)\delta_n-\mu_n\Sign X_0+(k-1)c_n. 
\end{eqnarray*}
\end{proof}
We next compute $\mu_n,c_n$ and prove that $\widetilde z_n^{\rm anomaly}(\tau_Y^*\vec a)=\frac{1}{4}\sigma_{Y\setminus\infty}(\tau_Y)\delta_n$ by using the above lemma.
\begin{lemma}\label{Lem74}
$\mu_n=\frac{3}{4}\delta_n$.
\end{lemma}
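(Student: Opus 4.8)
The plan is to read off $\mu_n=\tfrac34\delta_n$ directly from the identity of Lemma~\ref{Lem73}(3), by exploiting the fact that its left-hand side is independent of the auxiliary data $(X_0,k)$ while the coefficient of $\Sign X_0$ on its right-hand side is exactly $\tfrac34\delta_n-\mu_n$. Indeed, by definition $\widetilde z_n^{\rm anomaly}(\vec\gamma)$ may be computed from \emph{any} admissible bounding $4$-manifold, since the combination $-\mu_n\Sign X+\sum_{\Gamma\in\mathcal E_n}\int_{S_{2n}(T^vX)}\bigwedge_i\phi_i(\Gamma)^*W(\gamma_i)[\Gamma]$ is $X$-independent by Lemma~\ref{lem48}; hence both sides of Lemma~\ref{Lem73}(3) take one and the same value for every admissible choice of $(X_0,k)$.

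First I would specialize to $Y=S^3$ with $\tau_Y=\tau_{\RR^3}$, so that $\sigma_{Y\setminus\infty}(\tau_Y)=\sigma_{\RR^3}(\tau_{\RR^3})=0$ and, by Corollary~\ref{Lem711}, $\widetilde z_n^{\rm anomaly}(\tau_{\RR^3}^*\vec a)=0$; fixing one admissible pair $(X_0,k)$ (which exists by the construction given before Lemma~\ref{Lem73}), the identity of Lemma~\ref{Lem73}(3) becomes
$$0=\Bigl(\tfrac34\delta_n-\mu_n\Bigr)\Sign X_0+\tfrac{k-1}{2}\delta_n+(k-1)c_n .$$
Next I would vary $X_0$ while keeping $k$ fixed so as to change its signature: let $W$ be the closed spin $4$-manifold obtained from $K3$ by interior connected sum with eleven copies of $S^1\times S^3$, so that $W$ is spin with $\chi(W)=2$ and $\Sign W=-16$, and for $j\ge 0$ let $X_0^{(j)}$ be the interior connected sum of $X_0$ with $j$ copies of $W$. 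Then $X_0^{(j)}$ is spin, $\chi(X_0^{(j)})=\chi(X_0)=0$, $\partial X_0^{(j)}=\partial X_0=S^3\sqcup k(-S^3)$, and $\Sign X_0^{(j)}=\Sign X_0-16j$; since $X_0^{(j)}$ is spin with vanishing Euler characteristic and nonempty boundary, the prescribed boundary framing extends over it by the obstruction-theoretic argument used for $X_0$, so $(X_0^{(j)},k)$ is again admissible. Applying the displayed identity to $X_0^{(j)}$ and subtracting the case $j=0$ yields $0=-16j\bigl(\tfrac34\delta_n-\mu_n\bigr)$ for every $j\ge1$, and since $\mathcal A_n(\emptyset)$ is an $\RR$-vector space we conclude $\mu_n=\tfrac34\delta_n$.

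The point that needs care is the verification that each $X_0^{(j)}$ genuinely satisfies all the hypotheses behind Lemma~\ref{Lem73}(3) — being spin, having zero Euler characteristic, carrying the prescribed boundary, and admitting the required extension of the boundary framing. The use of the non-simply-connected summand $S^1\times S^3$ is forced: a $K3$ summand is needed to move the signature but it raises $\chi$, and since no simply connected closed spin $4$-manifold has Euler characteristic $0$, the compensating $\chi$-adjustment must be made with non-simply-connected spin pieces; the resulting framing-extension obstructions are handled exactly as in the discussion preceding Lemma~\ref{Lem73}. As an alternative one could bypass the cobordism bookkeeping and compare directly the coefficient of $\Sign X$ in the framing dependence of $z_n^{\rm KKT}(\,\cdot\,;\tau_Y)$, governed by $\delta_n$ together with the Hirzebruch relation $p_1[X]=3\Sign X$, with the coefficient of $\Sign X$ in $\widetilde z_n^{\rm anomaly}(\,\cdot\,;X)$, governed by $\mu_n$; matching these again produces the factor $\tfrac34$.
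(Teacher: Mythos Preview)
Your proof is correct and follows essentially the same approach as the paper: specialize Lemma~\ref{Lem73}(3) to $Y=S^3$ with $\tau_Y=\tau_{\RR^3}$, so that the left-hand side vanishes by Corollary~\ref{Lem711}, and then extract the coefficient of $\Sign X_0$. The only difference is in execution: the paper picks a single explicit $X_0=K3\,\sharp\,11T^4\setminus(B^4\sqcup B^4)$, which is spin with $\chi(X_0)=0$, $k=1$, and $\Sign X_0=16$; since $k=1$ kills the last two terms outright, the identity reduces immediately to $0=16\bigl(\tfrac34\delta_n-\mu_n\bigr)$. Your variation argument (fix $k$, move $\Sign X_0$ by connect-summing with a closed spin $4$-manifold of Euler characteristic $2$) reaches the same conclusion but takes one more step; the paper's choice of $k=1$ is the shortcut you are implicitly working around.
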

\begin{proof}
Let $X_0=K3\sharp 11T^4\setminus (B^4\sqcup B^4)$. 
Then $X_0$ is a spin 4-manifold satisfying $\chi(X_0)=0$ and $\Sign X_0=16$.
It is possible to deal with $\partial X_0=S^3\sqcup -S^3$.
By Lemma~\ref{Lem73} (3), we have 
$0=\widetilde z_n^{\rm anomaly}(\tau_{\RR^3}^*\vec a)
=(\frac{3}{4}\delta_n-\mu_n)\Sign X_0$.
Since $\Sign X_0=16\not=0$, we have $\mu_n=\frac{3}{4}\delta_n$.    
\end{proof}
\begin{lemma}\label{Lem75}
$c_n=\frac{1}{2}\delta_n$.
\end{lemma}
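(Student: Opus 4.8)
The plan is to follow the same template as Lemma~\ref{Lem74}: specialize the identity of Lemma~\ref{Lem73}(3) to $Y=S^3$ equipped with a well-chosen bounding spin $4$-manifold, and then solve for $c_n$. Combining Lemma~\ref{Lem73}(3) with Lemma~\ref{Lem74} --- whose conclusion $\mu_n=\frac34\delta_n$ makes the coefficient $\frac34\delta_n-\mu_n$ of $\Sign X_0$ vanish --- the identity of Lemma~\ref{Lem73}(3) simplifies: for any framing $\tau_Y$ of $T(Y\setminus\infty)$ and any spin $4$-manifold $X_0$ with $\chi(X_0)=0$ and $\partial X_0=Y\sqcup k(-S^3)$,
$$\widetilde z_n^{\rm anomaly}(\tau_Y^*\vec a)=\frac14\sigma_{Y\setminus\infty}(\tau_Y)\,\delta_n+(k-1)\left(c_n-\tfrac12\delta_n\right).$$

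First I would take $Y=S^3$ with the standard framing $\tau_{\RR^3}$ of $\RR^3=S^3\setminus\infty$, for which $\sigma_{\RR^3}(\tau_{\RR^3})=0$ and, by Corollary~\ref{Lem711}, $\widetilde z_n^{\rm anomaly}(\tau_{\RR^3}^*\vec a)=\frac14\sigma_{\RR^3}(\tau_{\RR^3})\,\delta_n=0$. It then suffices to produce one spin $4$-manifold $X_0$ with $\chi(X_0)=0$, $\partial X_0=S^3\sqcup k(-S^3)$ and $k\ge 2$: for instance $X_0=(S^2\times S^2)\setminus(B^4\sqcup B^4\sqcup B^4\sqcup B^4)$, the complement of four disjoint open $4$-balls in $S^2\times S^2$. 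It is spin, $\chi(X_0)=\chi(S^2\times S^2)-4=0$, $\Sign X_0=0$, and its four boundary spheres may be labelled $S^3\sqcup 3(-S^3)$, so $k=3$. (Any spin $X_0$ with $\chi(X_0)=\Sign X_0=0$ and at least two $(-S^3)$ boundary components serves equally well.) Substituting into the displayed identity gives $0=(k-1)\left(c_n-\tfrac12\delta_n\right)=2\left(c_n-\tfrac12\delta_n\right)$, hence $c_n=\tfrac12\delta_n$.

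I do not expect a genuine obstacle: all of the analytic and cobordism-theoretic work is already carried out in Lemmas~\ref{Lem73} and \ref{Lem74}. The only points to be checked are that the framing-extension and orientation conventions used in Lemma~\ref{Lem73} apply verbatim to the chosen $X_0$ (they do, since $X_0$ is spin with vanishing Euler number), and that $X_0$ genuinely has more than one $(-S^3)$ boundary component, so that the rational coefficient $k-1$ is nonzero and the identity may be divided through by it.
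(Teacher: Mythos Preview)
Your argument is correct and follows essentially the same route as the paper: specialize Lemma~\ref{Lem73}(3) to $Y=S^3$ with $\tau_{\RR^3}$, use Lemma~\ref{Lem74} to kill the $\Sign X_0$ term, and read off $c_n$ from a choice of $X_0$ with $k\ge 2$. The paper takes $X_0=K3\sharp 10T^4\setminus(B^4\sqcup 3B^4)$ (also $k=3$), whereas you use the simpler $(S^2\times S^2)\setminus 4B^4$; since the signature contribution has already been eliminated by Lemma~\ref{Lem74}, your lighter choice works just as well.
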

\begin{proof}
Let $X_0=K3\sharp 10T^4\setminus (B^4\sqcup 3B^4)$. 
Then $X_0$ is a spin 4-manifold satisfying $\chi(X_0)=0$ and $\Sign X_0=16$.
It is possible to deal with $\partial X_0=S^3\sqcup 3(-S^3)$.
By Lemma~\ref{Lem73} (3) and Lemma~\ref{Lem74}, we have 
$0=\widetilde z_n^{\rm anomaly}(\tau_{\RR^3}^*\vec a)
=-\delta_n+2c_n$.
Then $c_n=\frac{1}{2}\delta_n$.    
\end{proof}
\begin{prop}\label{77}
$\widetilde z_n^{\rm anomaly}(\tau_Y^*\vec a)=\frac{1}{4}\sigma_{Y\setminus \infty}(\tau_Y)\delta_n.$
\end{prop}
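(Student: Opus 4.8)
The plan is to read Proposition~\ref{77} off directly from Lemma~\ref{Lem73}~(3), once the universal constants $\mu_n$ and $c_n$ have been pinned down by Lemma~\ref{Lem74} and Lemma~\ref{Lem75}. Concretely, for the given rational homology $3$-sphere $Y$ and the framing $\tau_Y$ of $T(Y\setminus\infty)$, I would fix a spin $4$-manifold $X_0$ with $\chi(X_0)=0$ and $\partial X_0=Y\sqcup k(-S^3)$ for some $k\ge 1$ (together with the accompanying framings $\tau_i$ of $S^3_i\setminus\infty$) as produced just before Lemma~\ref{Lem73}, and an auxiliary $4$-manifold $X'$ with $\chi(X')=0$, $\partial X'=S^3$. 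Lemma~\ref{Lem73}~(3) then presents $\widetilde z_n^{\rm anomaly}(\tau_Y^*\vec a)$ as $\tfrac14\sigma_{Y\setminus\infty}(\tau_Y)\delta_n$ plus a correction term, and the whole task is to see that this correction vanishes.

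The correction is a $\ZZ$-linear combination of $\bigl(\tfrac34\delta_n-\mu_n\bigr)\Sign X_0$ and of a $(k-1)$-fold multiple of $\delta_n$ and of $c_n$; explicitly it has the shape
\[
\Bigl(\tfrac34\delta_n-\mu_n\Bigr)\Sign X_0+(k-1)\Bigl(c_n-\tfrac12\delta_n\Bigr).
\]
Substituting Lemma~\ref{Lem74}, which gives $\mu_n=\tfrac34\delta_n$, kills the first summand irrespective of the value of $\Sign X_0$; substituting Lemma~\ref{Lem75}, which gives $c_n=\tfrac12\delta_n$, kills the second summand irrespective of $k$. Hence the entire correction is $0$ in $\mathcal A_n(\emptyset)$ and we are left with $\widetilde z_n^{\rm anomaly}(\tau_Y^*\vec a)=\tfrac14\sigma_{Y\setminus\infty}(\tau_Y)\delta_n$, which is exactly the assertion.

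I do not expect any genuine obstacle here: all of the cobordism-theoretic and integration-theoretic content has already been packaged into Lemmas~\ref{Lem73},~\ref{Lem74} and~\ref{Lem75}, so what remains is a one-line cancellation. The only point worth a remark is internal consistency with the auxiliary choices: the left-hand side $\widetilde z_n^{\rm anomaly}(\tau_Y^*\vec a)$ is already well defined by Theorem~\ref{mainprop} together with Lemma~\ref{lem48}, and Lemma~\ref{Lem73}~(3) is stated as an honest equality of such quantities, so the fact that the resulting formula $\tfrac14\sigma_{Y\setminus\infty}(\tau_Y)\delta_n$ is visibly independent of $X_0$, of $k$, and of the framings $\tau_i$ requires no separate verification.
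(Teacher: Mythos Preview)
Your proposal is correct and follows exactly the paper's approach: the paper's proof of Proposition~\ref{77} likewise just substitutes the values $\mu_n=\tfrac34\delta_n$ (Lemma~\ref{Lem74}) and $c_n=\tfrac12\delta_n$ (Lemma~\ref{Lem75}) into Lemma~\ref{Lem73}(3) and observes that the correction $\bigl(\tfrac34\delta_n-\mu_n\bigr)\Sign X_0-\tfrac{k-1}{2}\delta_n+(k-1)c_n$ vanishes. Your regrouping of the $(k-1)$-terms as $(k-1)\bigl(c_n-\tfrac12\delta_n\bigr)$ is just a cosmetic rearrangement of the same cancellation.
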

\begin{proof}
Take $X_0, k, \widetilde\tau_X$ as in Lemma~\ref{Lem73}.
By Lemma~\ref{Lem73} (3), Lemma~\ref{Lem74} and Lemma~\ref{Lem75}, we have
$\widetilde z_n^{\rm anomaly}(\tau_Y^*\vec a)=\frac{1}{4}\sigma_{Y\setminus \infty}(\tau_Y)\delta_n-\frac{k-1}{2}\delta_n+(k-1)c_n
=\frac{1}{4}\sigma_{Y\setminus \infty}(\tau_Y)\delta_n$. 
\end{proof}
\subsection{Proof of $\widetilde z_n(Y)=z^{\rm FW}_{2n,3n}(Y)$.}
Let $f$ be an admissible Morse function with respect to $a\in S^2$.
The weighted sum $\mathcal M(f)+\mathcal M(-f)$ consists of weighted pairs of two distinct points on a gradient trajectory. 
There is a compactification $\mathcal M_S(\pm f)$ of $\mathcal M(f)+\mathcal M(-f)$ by adding pairs of points on broken trajectories as the Morse theory. Then $\mathcal M_S(\pm f)$ becomes a 4-cycle in $(C_2(Y),\partial C_2(Y))$ (Lemma~\ref{64}). 
See for Section~\ref{sect6} for the detail of the above argument.
\begin{lemma}
$\partial \mathcal M_S(\pm f)=c(\grad f)$ for any admissible Morse function $f$.
\end{lemma}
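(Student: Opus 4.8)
The plan is to identify the boundary of the compactified moduli space $\mathcal M_S(\pm f)$ piece-by-piece and match it with the two pieces $p_Y^{-1}(\pm a)$ and $c_0(\grad f)$ that make up $c(\grad f)\subset\partial C_2(Y)$. Recall that $\mathcal M_S(\pm f)$ is obtained from $\mathcal M(f)+\mathcal M(-f)$ by adding pairs of points on broken trajectories, so it is a $4$-chain in $C_2(Y)$; by Lemma~\ref{64} it is a cycle rel boundary, hence $\partial\mathcal M_S(\pm f)$ is supported in $\partial C_2(Y)$. I would decompose $\partial C_2(Y)$ into its faces $ST_\infty Y\times(Y\setminus\infty)$, $(Y\setminus\infty)\times ST_\infty Y$, $S\nu_{\Delta(Y\setminus\infty)}=STY|_{Y\setminus\infty}$ and $q^{-1}(\infty^2)$, and analyse the limiting configurations of pairs $(x,\Phi_f^t(x))$ (and its $-f$ analogue, together with the correction terms $g_{ij}(\mathcal A_{q_i}\times\mathcal D_{p_j})$) that land in each face.

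\textbf{Step 1 (the diagonal face).} As $t\to 0^+$, the pair $(x,\Phi_f^t(x))$ collapses onto the diagonal, and the normal direction of approach is exactly $-\grad f(x)/\|\grad f(x)\|$ for $f$ and $+\grad(-f)=\ldots$ — more precisely the two halves $\mathcal M(f)$ and $\mathcal M(-f)$ contribute the two arcs $\overline c_{\grad f}$ and $\overline c_{-\grad f}$ inside $S\nu_{\Delta(Y\setminus\infty)}\cong STY|_{Y\setminus\infty}$. By Lemma~\ref{keylemma} their union is the closed submanifold $c_0(\grad f)$. I expect the orientation bookkeeping here to reproduce exactly the orientation convention chosen for $c_0(\gamma)$; the cancellation of the spurious boundary $\partial\overline c_{\grad f}=-\partial\overline c_{-\grad f}$ proved inside Lemma~\ref{keylemma} is what makes this piece boundaryless, matching the fact that $c_0(\grad f)$ is itself closed.

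\textbf{Step 2 (the faces over $\infty$).} Near $N(\infty;Y)\setminus\infty$ the Morse function is $q_a$, so on that region $\grad f=\grad q_a$ and a gradient trajectory through $x\in N(\infty;Y)$ is a straight line in the $a$-direction under $\tau^\infty$. Taking $t\to\infty$ with $x$ fixed away from $\infty$, the point $\Phi_f^t(x)$ (for $f$, i.e. following $-\grad f$) runs off toward $\infty$ along the $-a$ ray, giving a point of $ST_\infty Y\times(Y\setminus\infty)$ whose $ST_\infty Y$ coordinate is $-a$ (under $\tau^\infty$); symmetrically for the $-f$ trajectory one gets $+a$, and for the second factor (trajectories emanating from near $\infty$) one gets $ST_\infty Y$ coordinate $\pm a$ via $p_2$. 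After applying $\iota_{S^2}$ as in the definition of $p_Y$, these collected limits are precisely $p_Y^{-1}(\pm a)$ restricted to the two boundary strata $ST_\infty Y\times(Y\setminus\infty)\cup(Y\setminus\infty)\times ST_\infty Y$, together with the part $q^{-1}(N(\infty;Y)^2)$ coming from pairs both escaping to $\infty$, where $p_c$ records the same direction $\pm a$. I would also check that the correction terms $\mathcal A_{q_i}\times\mathcal D_{p_j}$ contribute nothing new here: the ascending/descending manifolds of interior critical points are disjoint from $\infty$, and the only boundary of $\overline{\mathcal A_{q_i}\times\mathcal D_{p_j}}\setminus\Delta$ lies over broken trajectories through critical points, which are killed by the algebraic identity $\partial\circ g=\mathrm{id}$ (this is the point of introducing the combinatorial propagator $g$, exactly as in Watanabe~\cite{Watanabe}).

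\textbf{Step 3 (assembling and orientations).} Putting Steps~1 and~2 together gives, as sets, $\partial\mathcal M_S(\pm f)=c_0(\grad f)\cup p_Y^{-1}(\pm a)=c(\grad f)$. The remaining work, and \textbf{the part I expect to be the main obstacle}, is the orientation and multiplicity check: one must verify that the induced boundary orientation on each stratum agrees with the chosen orientation of $c(\grad f)$ (in particular the factor $\tfrac12$ hidden in how $c(\gamma)$ pairs against the simplicial propagator, cf. Lemma~\ref{alt}, and the sign coming from the ``outward normal first'' convention applied to the blow-up faces), and that no boundary stratum of higher codimension survives — this is precisely what the compactification lemmas of Section~\ref{sect6} (Lemma~\ref{64} and its companions) are designed to guarantee. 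Once the orientations are pinned down stratum by stratum, the identity $\partial\mathcal M_S(\pm f)=c(\grad f)$ follows.
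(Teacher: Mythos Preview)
Your decomposition into the diagonal face and the faces near $\infty$ is exactly the paper's approach, and Step~1 matches the paper's use of Lemma~\ref{65} to identify the diagonal-face contribution with $c_0(\grad f)$.

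There is one genuine slip in Step~2: it is \emph{not} true that the correction chains $g_{ij}\,\overline{\mathcal A_{q_i}}\times\overline{\mathcal D_{p_j}}$ are ``disjoint from $\infty$''. Since $f$ has no critical points of index $0$ or $3$, both $\overline{\mathcal A_{q_i}}$ (index~$1$, codimension~$1$) and $\overline{\mathcal D_{p_j}}$ (index~$2$, codimension~$1$) run off to $\infty$, so these chains \emph{do} meet the faces $(Y\setminus\infty)\times ST_\infty Y$ and $ST_\infty Y\times(Y\setminus\infty)$. Your conclusion is nonetheless right: because $\grad f=\grad q_a$ on $N(\infty;Y)\setminus\infty$, any such limit still has $ST_\infty Y$-coordinate $\pm a$. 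So the error is in the justification, not the statement.

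For your ``main obstacle'' (Step~3, orientations and multiplicities on the infinity faces), the paper avoids a direct bookkeeping argument. Instead it argues: $\partial\mathcal M_S(\pm f)$ is a closed $3$-chain (being the boundary of a relative $4$-cycle), its restriction to $(Y\setminus\infty)\times ST_\infty Y$ is contained in the $3$-manifold $(Y\setminus\infty)\times\{\pm a\}$, and a pointwise check shows that for each $x\notin\Crit(f)$ one has $\partial\mathcal M_S(\pm f)\cap(\{x\}\times ST_\infty Y)=\{(x,a),(x,-a)\}$. These three facts force $\partial\mathcal M_S(\pm f)\cap((Y\setminus\infty)\times ST_\infty Y)=(Y\setminus\infty)\times\{\pm a\}$ with multiplicity one; the other infinity faces are handled symmetrically. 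This ``it is a $3$-cycle, sits inside a $3$-manifold, and meets generic fibres in the expected set'' argument is cleaner than tracking each stratum and each correction term separately, and it simultaneously settles the orientation question you flagged.
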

\begin{proof}
Since $\grad f|_{N(\infty;Y)}=\grad q_a$, 
if $(x,u)\in \partial \mathcal M_S(\pm f)\cap((Y\setminus \infty)\times ST_{\infty}Y)$ then $u=\pm a$.
On the other hand, $\partial\mathcal M_S(\pm f)\cap(\{x\}\times ST_{\infty}Y)=\{(x,a),(x,-a)\}$ for any $x\not\in \Crit(f)$.
Since $\partial \mathcal M_S(\pm f)$ is a 3-cycle, we have $\partial \mathcal M_S(f)\cap((Y\setminus \infty)\times ST_{\infty}Y)=(Y\setminus\infty)\times (\pm a)$.
With a similar argument, we have $\partial C_2(Y)\setminus S\nu_{\Delta(Y\setminus \infty)}=p_Y^{-1}(\pm a)$.
Since this fact and Lemma~\ref{65} we conclude the proof. 
\end{proof}
We follow the notations $a_1,\cdots, a_{3n}$, $f_1,\cdots, f_{3n}$ as in Section~\ref{Morse}.
In the following proposition, the notion "generic $\vec f$" means that $\bigcap_iP_i(\Gamma)^{-1}\mathcal M_S(\pm f_i)=\emptyset$ for any $\Gamma\in\mathcal E_n$.
We remark that there exists such a $\vec f$ (See Remark~\ref{rmk79}). 
\begin{prop}
For generic $\vec f$,
$z_{2n,3n}^{\rm FW}(Y;\vec f)=
\widetilde z_n(Y;\grad \vec f)$.
\end{prop}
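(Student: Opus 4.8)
The plan is to compare the two definitions term by term, using the alternative simplicial description of $\widetilde z_n(Y;\vec\gamma)$ from Lemma~\ref{alt} to match it with the intersection-theoretic count in Definition~\ref{defini:pt}. The key point is that for $\gamma_i=\grad f_i$, the compactified moduli space $\mathcal M_S(\pm f_i)$ is a $4$-cycle in $(C_2(Y),\partial C_2(Y))$ (Lemma~\ref{64}) whose boundary is exactly $c(\grad f_i)$ by the lemma just proved. Hence $\tfrac{1}{2}\mathcal M_S(\pm f_i)$ is a relative cycle representing a class in $H_4(C_2(Y),\partial C_2(Y);\RR)$ whose boundary is $\tfrac12[c(\grad f_i)]=\mathrm{PD}[\omega_\partial(\grad f_i)]$; by Poincar\'e--Lefschetz duality this class is the Poincar\'e dual of the simplicial propagator $\omega^s(\grad f_i)$, up to a class coming from $H_4(C_2(Y);\RR)=0$ (which vanishes since $Y$ is a rational homology sphere). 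So $\omega^s(\grad f_i)$ may be taken to be the cochain that counts intersection number with $\tfrac12\mathcal M_S(\pm f_i)$.

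First I would set up the genericity: choose $\vec f$ so that $\bigcap_i P_i(\Gamma)^{-1}\mathcal M_S(\pm f_i)=\emptyset$ on $\partial C_{2n}(Y)$ for every $\Gamma$ (possible by Remark~\ref{rmk79}), which is exactly the hypothesis needed to invoke Lemma~\ref{alt}. Then I would take, for each $i$, the simplicial propagator $\omega^s(\grad f_i)$ to be represented by the relative cocycle $\sigma\mapsto \tfrac12\,\sharp(\sigma\cap \mathcal M_S(\pm f_i))$ on a simplicial decomposition of $C_2(Y)$ fine enough and transverse to all $\mathcal M_S(\pm f_i)$; the boundary condition $\omega^s_\partial(\grad f_i)(\sigma)=\tfrac12\sharp(\sigma\cap c(\grad f_i))$ holds because $\partial\mathcal M_S(\pm f_i)=c(\grad f_i)$, and the anti-symmetry under $\iota$ follows since $\iota$ maps $\mathcal M_S(\pm f_i)$ to $-\mathcal M_S(\pm f_i)$ (swapping the two points on a trajectory reverses orientation). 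With this choice, Lemma~\ref{alt} gives
\[
\widetilde z_n(Y;\grad\vec f)=\sum_{\Gamma\in\mathcal E_n}\Big\langle \bigwedge_i P_i(\Gamma)^*\omega^s(\grad f_i),\,[C_{2n}(Y),\partial C_{2n}(Y)]\Big\rangle[\Gamma].
\]
The right-hand Kronecker pairing is, by definition of the cup product via intersection with transverse representatives, the signed count of points in $\bigcap_i P_i(\Gamma)^{-1}(\tfrac12\mathcal M_S(\pm f_i))$ restricted to the interior $(Y\setminus\infty)^{2n}\setminus\Delta$, i.e. $\tfrac{1}{2^{3n}}\sharp\big(\bigcap_i P_i(\Gamma)|^{-1}_{(Y\setminus\infty)^{2n}\setminus\Delta}(\mathcal M_S(\pm f_i))\big)$. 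Finally I would identify this count of intersections with the non-compactified $\mathcal M(\pm f_i)$ appearing in Definition~\ref{defini:pt}: since all intersection points lie in the interior away from the boundary strata (the genericity hypothesis) and $\mathcal M_S(\pm f_i)$ and $\mathcal M(\pm f_i)$ agree on the interior, the two counts coincide, yielding $z^{\rm FW}_{2n,3n}(Y;\vec f)$.

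The anomaly terms match trivially: $z^{\rm anomaly}_{2n,3n}(\vec f)=\widetilde z^{\rm anomaly}_n(\grad\vec f)$ by definition, so subtracting gives $z^{\rm FW}_{2n,3n}(Y)$ versus $\widetilde z_n(Y)$ with the same $\vec\gamma=\grad\vec f$. The main obstacle I expect is the orientation bookkeeping in the middle step: one must check that the sign conventions for $\mathcal M_S(\pm f_i)$ (inherited from the orientation of $M_\to(f_i)$ and of the descending/ascending manifolds fixed in Section~2.3), the Poincar\'e--Lefschetz duality sign, and the convention turning the cup-product pairing into a transverse intersection number all conspire to produce exactly the factor $\tfrac{1}{2^{3n}}$ and the correct global sign in front of each $[\Gamma]$, with no extra sign depending on $\Gamma$ (which would otherwise spoil the identity in $\mathcal A_n(\emptyset)$). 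A secondary technical point is justifying that a single simplicial decomposition can be chosen simultaneously transverse to all $\mathcal M_S(\pm f_i)$ and all the maps $P_i(\Gamma)$, and fine enough for the intersection-theoretic computation of the cup product to be valid; this is routine but should be stated.
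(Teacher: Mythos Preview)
Your proposal is correct and follows essentially the same approach as the paper: define the simplicial propagator $\omega^s(\grad f_i)$ as the cochain $\sigma\mapsto\tfrac12\sharp(\sigma\cap\mathcal M_S(\pm f_i))$, verify it satisfies the required boundary and symmetry conditions (using $\partial\mathcal M_S(\pm f_i)=c(\grad f_i)$ from the preceding lemma), and then invoke Lemma~\ref{alt} together with the intersection-theoretic interpretation of the Kronecker pairing. Your write-up is in fact more detailed than the paper's own proof, which simply declares $\omega^s(\grad f_i)$ to be a simplicial propagator ``by construction'' and quotes Lemma~\ref{alt} and intersection theory; your explicit checks of anti-symmetry, of the vanishing of $H_4(C_2(Y);\RR)$, and of the passage from $\mathcal M_S(\pm f_i)$ to $\mathcal M(\pm f_i)$ in the interior are all appropriate elaborations of steps the paper leaves implicit.
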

\begin{proof}
We define the 2-cocycle $\omega^s_i(\grad~f_i)\in S^2(|T_{C_2(Y)}|)$ by
$\omega^s(\grad~f_i)(\sigma)=\frac{1}{2}\sharp(\sigma\cap \mathcal M_S(f_i))$ for each 2-cycle $\sigma$ of $T_{C_2(Y)}$. 

By the construction, $\omega^s(\grad~f_i)$ is simplicial propagator for each $i$.
By the intersection theory and Lemma~\ref{alt}, we have
$$z^{\rm FW}_{2n,3n}(Y;\vec f)=\langle \bigwedge_i P_i(\Gamma)^*\omega^s(\grad~f_i),[C_{2n}(Y),\partial C_{2n}(Y)]\rangle
=\frac{1}{2^{3n}}\sharp \left(\bigcap_i P_i(\Gamma)^{-1}\mathcal M_S(f_i)\right) $$
for any $\Gamma\in\mathcal E_n$.
\end{proof}
\begin{remark}\label{rmk79}
We can show that $\partial C_{2n}(Y)\cap (\bigcap_iP_i(\Gamma)^{-1}\mathcal M_S(\pm f_i))=\emptyset$ for generic $\vec f$ by an argument similar to Lemma 2.7 in Watanabe \cite{Watanabe}.
For example,
we take the following $\Phi'_{\Gamma}$ instead of $\Phi$ in Lemma 2.7 in \cite{Watanabe} when we prove  $F(\{1,2,4\})\cap(\bigcap_{i=1}^6P_i({\rm Smooth}(\Gamma))^{-1}\mathcal M_S(\pm f_i))=\emptyset$ for the graph $\Gamma$ in the picture (2.2) in \cite{Watanabe} (See Example 2.6 in \cite{Watanabe} and see \S 3.4 of \cite{Watanabe} for the definition of the operator $\rm Smooth$).  
$$\phi'_{\Gamma}:F(\{1,2,4\})\times \left(\bigcup_{f_1\in\mathcal U_1} \mathcal A_p(f_1)\cap \mathcal D_q(f_1)\right) \times (\RR_{>0})^3\times \prod_{i=2}^4\mathcal U_i~~~~~~~~~~~~~~$$
$$~~~~~~~~~~~~~~~~~~~~~~~~~~~\to Y^3\times (TY)^2\times (TY)^2\times Y^3,$$
\begin{eqnarray*}
&&\Phi'_{\Gamma}(((x_1,[w_1,w_2,w_4]), x_3),u,t_2,t_3,t_4,f_2,f_3,f_4)\\
&&=((x_1,u,\Phi_{f_6}^{t_6}(x_3)), (\grad_{x_1}f_2, \frac{w_2-w_1}{\|w_2-w_1\|}),\\
&&~~(\grad_{x_1}f_3, \frac{w_4-w_2}{\|w_4-w_2\|}), (x_3,\Phi_{f_4}^{t_4}(x_1),\Phi_{f_5}^{t_5}(x_1))).
\end{eqnarray*}
Here $x_1\in Y\setminus \infty$, $[w_1,w_2,w_3]\in \breve S_{\{1,2,4\}}T_{x_1}Y$, $x_3\in Y\setminus \{x_1,\infty\}$.
Let 
$$\Delta'_{\Gamma}=\{((y_1,y_1,y_1), ((y_2,s_2v_2),(y_2,t_2v_2)), ((y_3,s_3v_3),(y_3,t_3v_3)),(y_4,y_4,y_4))$$
$$~~~~~\mid (y_1,y_2,y_3,y_4)\in (Y\setminus \infty)^4, t_i,s_i\ge 0, v_i\in T_{y_i}Y\}.$$
Then $\Phi'_{\Gamma}$ is transverse to $\Delta'_{\Gamma}$ as Lemma~2.7 in \cite{Watanabe}. 
\end{remark}
It is obvious that
 $z_{2n,3n}^{\rm anomaly}(Y;\vec f)=\widetilde z_n^{\rm anomaly}(Y;\grad\vec f)$ by the definitions of the anomaly parts.
\section{Compactification of moduli space $\mathcal M(f)$}\label{sect6}
In this section we give a compactification $\mathcal M_S(\pm f)$ of 
$\mathcal M(f)\cup \mathcal M(-f)$ and then show that $\mathcal M_S(\pm f)$ is a 4-cycle in $(C_2(Y),\partial C_2(Y))$.
Let $M_{\to}(f)=\varphi^{-1}|_{Y^2\times (0,\infty)}(\Delta)$ where $\varphi:Y^2\times (-\infty,\infty)\to Y^2, (x,y)\mapsto (y,\Phi_f^t(x))$.
\begin{lemma}[{Watanabe \cite[Proposition 2.12]{Watanabe} (cf. \cite{BH})}]\label{Lem71}
There is a manifold with corners $\overline{M}_{\to}(f)$ satisfying the following conditions.
\begin{itemize}
\item[{\rm (1)}] $\overline{M}_{\to}(f)=\{g:I\to Y\mid I\subset -\RR,\\
g~\mbox{\rm is a piecewise smooth map}, f(g(t))=t, \frac{dg(t)}{dt}=\frac{\grad_{g(t)} f}{\|\grad_{g(t)}f\|^2}~\mbox{\rm for any}~ t \}$ as sets,
\item[{\rm (2)}] ${\rm int}\overline M_{\to}(f)=M_{\to}(f)$, and
\item[{\rm (3)}] $\partial \overline M_{\to}(f)=\sum_{i}\mathcal A_{p_i}\times \mathcal D_{p_i}+\sum_{j}\mathcal A_{q_j}\times \mathcal D_{q_j}$.
\end{itemize}
\end{lemma}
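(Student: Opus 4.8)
The plan is to realize $\overline{M}_{\to}(f)$ as the broken-trajectory compactification of the space of finite $-\grad f$-flow segments, as in \cite[Proposition~2.12]{Watanabe} (cf.\ \cite{BH}), and to read off (1)--(3) from that construction. First I would pin down the interior: as a set $M_{\to}(f)$ is the graph $\{(x,\Phi_f^t(x)) : x\in Y\setminus\Crit(f),\ t>0\}$, and via $(x,t)\mapsto(x,\Phi_f^t(x))$ it is a smooth $4$-manifold diffeomorphic to $(Y\setminus\Crit(f))\times(0,\infty)$. Since $\grad f\neq 0$ off $\Crit(f)$, the $-\grad f$-trajectory segment from $x$ down to $\Phi_f^t(x)$ may be reparametrized by the value of $f$, and this produces exactly a map $g$ of the form in (1) whose domain is a compact non-degenerate interval on which $g$ avoids $\Crit(f)$ (an ``unbroken'' segment with regular endpoints); these are to be the interior of $\overline{M}_{\to}(f)$.

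Next I would adjoin the broken segments. Standard Morse theory shows that a sequence of unbroken segments confined to a compact part of $Y^2\setminus\Delta$ (equivalently, with its two endpoints staying uniformly apart) can degenerate only by an endpoint running into a critical point, by the flow time across some intermediate critical level blowing up, or by a combination of these; iterating, and using compactness of $Y$ together with the transversality of the ascending and descending manifolds furnished by the Morse--Smale condition, every such sequence subconverges --- in the obvious topology on segments --- to a reparametrized gradient segment now permitted to meet $\Crit(f)$, i.e.\ to an element of the set in (1), and conversely every such broken segment is a limit of unbroken ones. This identifies $\overline{M}_{\to}(f)$ as a set. I would also note that the ``segment shrinks to a point'' direction $t\to 0$ is deliberately \emph{not} compactified: it is precisely the locus escaping $Y^2\setminus\Delta$ towards the diagonal, so it produces no boundary face, which is why no $\Delta$-stratum appears in (3).

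The technical heart, which I expect to be the main obstacle, is placing a smooth manifold-with-corners structure on $\overline{M}_{\to}(f)$ near the broken segments. Near an interior point the diffeomorphism above is a chart. Near a segment broken once, at a critical point $c$ lying in its interior, I would work in Morse coordinates in which $-\grad f$ is linear and hyperbolic; the stable/unstable splitting at $c$ together with Morse--Smale transversality then supplies a gluing (``neck length'') parameter $\lambda\in[0,\ep)$ and a chart of the form $\Sigma\times[0,\ep)$, where $\Sigma$ assembles the data of the two sub-segments and $\{\lambda=0\}$ is the broken locus. Carrying this out simultaneously at each critical point at which a more degenerate segment fails to be a smooth unbroken arc (an interior break, or a critical endpoint of a sub-segment) yields charts modelled on $[0,\ep)^k\times\RR^{4-k}$; checking that these are mutually compatible, and that the result is Hausdorff and second countable, is the delicate hyperbolic-flow analysis of \cite{BH} and \cite{Watanabe}. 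With these charts in hand one has ${\rm int}\,\overline{M}_{\to}(f)=M_{\to}(f)$, which is assertion (2).

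Finally I would compute the boundary. The codimension-one faces are exactly the loci $\{\lambda_c=0\}$, i.e.\ segments broken at exactly one interior critical point $c$: a lower sub-segment limiting to $c$ from below together with an upper sub-segment issuing upward from $c$. (The ``critical endpoint'' degenerations mentioned above are of codimension at least two, since there the remaining regular endpoint is forced onto a positive-codimension ascending or descending manifold, so they contribute nothing to $\partial$.) Since a point of $\overline{M}_{\to}(f)$ is recorded in $Y\times Y$ by the pair (top endpoint, bottom endpoint), and for such a broken segment the top endpoint ranges over all points whose $-\grad f$-flow limits to $c$ --- that is, over $\mathcal{A}_c$ --- while the bottom endpoint ranges over all points whose $\grad f$-flow limits to $c$ --- that is, over $\mathcal{D}_c$ --- the face attached to $c$ is $\mathcal{A}_c\times\mathcal{D}_c$. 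As $\dim\mathcal{A}_c+\dim\mathcal{D}_c=(3-{\rm ind}(c))+{\rm ind}(c)=3$, this is indeed of codimension one, and summing over $c\in\Crit(f)$ (the points $p_i$ and $q_j$) gives $\partial\overline{M}_{\to}(f)=\sum_i\mathcal{A}_{p_i}\times\mathcal{D}_{p_i}+\sum_j\mathcal{A}_{q_j}\times\mathcal{D}_{q_j}$, which is (3); matching orientations to the conventions of Section~2 is a short additional check.
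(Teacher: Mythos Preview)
The paper does not give its own proof of this lemma: it is stated with attribution to Watanabe \cite[Proposition~2.12]{Watanabe} (cf.\ \cite{BH}) and immediately used. Your sketch is a faithful outline of exactly the broken-trajectory compactification argument one finds in those references, so there is nothing to compare; your approach \emph{is} the cited one.

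Two small remarks on the write-up. First, your phrase ``a lower sub-segment limiting to $c$ from below together with an upper sub-segment issuing upward from $c$'' has the direction words slightly tangled (the upper piece descends from $x$ to $c$, the lower from $c$ to $y$), though your identification of the face with $\mathcal A_c\times\mathcal D_c$ is correct. Second, your justification that ``critical endpoint'' degenerations have codimension $\ge 2$ is right in this setting precisely because $f$ has no critical points of index $0$ or $3$: if, say, the top endpoint lands on $c$, the stratum has dimension $\dim\mathcal D_c=\operatorname{ind}(c)\le 2$, hence codimension $\ge 2$ in the $4$-manifold. It would be worth saying explicitly that this uses the admissibility hypothesis on $f$.
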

Note that ${\rm int}(\overline M_{\to}(f)+\overline M_{\to}(-f))=\varphi^{-1}(\Delta)$.
We denote by $\overline M_{\to}(f)\to (Y\setminus \infty)^2$ the continuous map that is the extension of the embedding 
$M_{\to}(f)\to (Y\setminus \infty)^2$ to $\overline M_{\to}(f)$.
For simplicity of notation, we write $\overline M_{\to}(f)$ instead of $\overline{M}_{\to}(f)\to (Y\setminus\infty)^2$.

Similarly we denote by $\overline{\mathcal A_{p_i}}\to Y$ the extension of $B^1(1)\cong \mathcal A_{p_i}\to Y$ to $\overline{B^1(1)}$
and we write $\overline{\mathcal A_{p_i}}$ instead of $\overline{\mathcal A_{p_i}}\to Y$
(We remark that $\mathcal A_{p_i}$ is diffeomorphic to $B^1(1)$ the interior of unit disk in $\RR^1$).
We also define $\overline{\mathcal D_{p_i}}, \overline{\mathcal A_{q_j}}$, and so on.
\begin{lemma}
\begin{itemize}
\item[{\rm (1)}]  $\overline{M}_{\to}(f)+\overline{M}_{\to}(-f)$ is transverse to $\Delta$.
\item[{\rm (2)}] $\overline{\mathcal A_{q_j}}\times \overline{\mathcal D_{p_i}}$ is transverse to $\Delta$.
\end{itemize}
\end{lemma}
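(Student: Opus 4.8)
The plan is to check transversality to $\Delta$ stratum by stratum on the manifolds with corners involved, each time reducing to one elementary fact: for linear subspaces $P,Q$ of a vector space $V$ one has $(P\times Q)+\Delta_V=V\oplus V$ if and only if $P+Q=V$. Combined with a chart of $Y$ around a point of the diagonal (so that transversality to $\Delta$ at a point over $(z,z)$ becomes surjectivity of the differential of the ``difference'' map $\pi_1\circ c-\pi_2\circ c$), this says that a product map $\overline A\times\overline B\to Y\times Y$ assembled from maps $\overline A\to Y$ and $\overline B\to Y$ of (closures of) submanifolds is transverse to $\Delta$ at a point lying over $(z,z)$ exactly when the images of the two tangent maps span $T_zY$; and for a manifold with corners it suffices to verify transversality along each of its faces.

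For (1), the first observation is that the top stratum $M_\to(f)={\rm int}\,\overline{M}_\to(f)$ is disjoint from $\Delta$: a point of $M_\to(f)$ is a pair $(x,\Phi_f^t(x))$ with $t>0$ and $x\notin\Crit(f)$, and $f$ decreases strictly along nonconstant gradient trajectories, so $x\neq\Phi_f^t(x)$; likewise for $M_\to(-f)$. Hence the only points of $\overline{M}_\to(f)+\overline{M}_\to(-f)$ lying over $\Delta$ are on the boundary, which by Lemma~\ref{Lem71}(3) is, for $\overline{M}_\to(f)$, a union of the faces $\overline{\mathcal A_c}\times\overline{\mathcal D_c}$ with $c\in\Crit(f)$ (and $\overline{\mathcal D_c}\times\overline{\mathcal A_c}$ for $\overline{M}_\to(-f)$). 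Such a face meets $\Delta$ only where its two factors hit a common point $z$ of $Y$; then $z$ flows, possibly brokenly, down to $c$ and also emanates, possibly brokenly, from $c$, and strict decrease of $f$ forces $z=c$, while $c$ is the unique preimage of itself in each of $\overline{\mathcal A_c}$, $\overline{\mathcal D_c}$ and an interior point of both. So the only point of the face over $\Delta$ is an interior point of that codimension-one face, on which the map is the product of the inclusions $\mathcal A_c\hookrightarrow Y$, $\mathcal D_c\hookrightarrow Y$; by the elementary fact above and the normalization $T_c\mathcal A_c\oplus T_c\mathcal D_c=T_cY$ of ascending and descending manifolds, the restriction to the face, and hence the ambient $4$-manifold, is transverse to $\Delta$ there.

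For (2), the top stratum $\mathcal A_{q_j}\times\mathcal D_{p_i}$ meets $\Delta$ exactly along $\{(z,z)\mid z\in\mathcal A_{q_j}\cap\mathcal D_{p_i}\}$, and there the Morse--Smale condition gives $T_z\mathcal A_{q_j}+T_z\mathcal D_{p_i}=T_zY$, so the elementary fact yields transversality on that stratum. The faces of $\overline{\mathcal A_{q_j}}\times\overline{\mathcal D_{p_i}}$ are products of the broken-trajectory strata of $\overline{\mathcal A_{q_j}}$ and of $\overline{\mathcal D_{p_i}}$ (Burghelea--Haller, Watanabe); since $p_i$ has index $2$ and $q_j$ index $1$, the space of unparametrized gradient trajectories from $p_i$ to $q_j$ is $0$-dimensional, so there is no broken trajectory between $p_i$ and $q_j$, and an $f$-value comparison as in (1) shows that the only deeper strata meeting $\Delta$ do so at finitely many points, all lying over $(p_i,p_i)$ or over $(q_j,q_j)$. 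At such a point the relevant face is a finite cover of $\overline{\mathcal A_{p_i}}\times\overline{\mathcal D_{p_i}}$, respectively of $\overline{\mathcal A_{q_j}}\times\overline{\mathcal D_{q_j}}$, with the critical point occurring as the center of both factors, so transversality reduces once more to $T_{p_i}\mathcal A_{p_i}\oplus T_{p_i}\mathcal D_{p_i}=T_{p_i}Y$ (resp.\ its analogue at $q_j$); alternatively these finitely many exceptional points can be treated directly in a Morse chart around the critical point.

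The step I expect to be the main obstacle is the bookkeeping in (2): pinning down the boundary and corner stratification of $\overline{\mathcal A_{q_j}}$, $\overline{\mathcal D_{p_i}}$ and of their product, and verifying that along every stratum that meets $\Delta$ the evaluation map to $Y^2$ factors through a product of two embedded ascending or descending manifolds with the critical point at the center of each factor --- in particular ruling out broken-trajectory strata sitting over the diagonal. Once that structural description is in hand, each transversality check collapses to the one-line linear-algebra statement of the first paragraph.
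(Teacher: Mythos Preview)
Your argument for (2) is correct and in the same spirit as the paper's; the paper simply writes ``immediate from the Morse--Smale condition'' and does none of the boundary-stratum bookkeeping you carry out, but what you wrote is a legitimate expansion of that one line.

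For (1) there is a genuine gap. You analyse $\overline M_{\to}(f)$ and $\overline M_{\to}(-f)$ separately and assert that each interior is disjoint from $\Delta$. But the object in the lemma is the \emph{combined} chain $\overline M_{\to}(f)+\overline M_{\to}(-f)$, and the paper treats this as a single glued $4$--manifold: the Note immediately preceding the lemma states that its interior equals $\varphi^{-1}(\Delta)$ for $\varphi:Y^2\times(-\infty,\infty)\to Y^2$, i.e.\ with the full time line including $t=0$. The $t=0$ slice of that interior maps onto all of $\Delta$, so the interior of the combined object meets $\Delta$ along a $3$--dimensional locus, not just at isolated critical points. This is exactly why the paper's proof begins with ``$\grad f$ (which is the section of $\nu_{\Delta(Y\setminus\infty)}$) is transverse to the zero section'': that admissibility/Morse condition is what yields transversality of the interior to $\Delta$ along the $t=0$ locus, and it is also what makes the subsequent blow-up description $(\overline M_{\to}(f)+\overline M_{\to}(-f))(\Delta)=(\overline M_{\to}(f)+\overline M_{\to}(-f))\setminus\Delta\cup\{(x,\pm\grad_xf/\|\grad_xf\|)\}$ come out as a $4$--chain. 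Your proof never touches this ingredient, so it omits the main content of part (1). The boundary-face argument you give (transversality of $\mathcal A_c\times\mathcal D_c$ to $\Delta$ at $(c,c)$ via $T_c\mathcal A_c\oplus T_c\mathcal D_c=T_cY$) is correct and does coincide with the paper's second ingredient, but by itself it does not suffice.
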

\begin{proof}
{\rm (1)} 
$\grad f$(which is the section of $\nu_{\Delta(Y\setminus \infty)}$) is transverse to the zero section in $\nu_{\Delta(Y\setminus \infty)}$.
$\mathcal A_p\times \mathcal D_p\subset Y^2$ is transverse to $\Delta$ for any critical point $p\in \Crit(f)=\Crit(-f)$.
Thanks to Lemma~\ref{Lem71}~{\rm (2),(3)}, this finishes the proof of {\rm (1)}.   

{\rm (2)} is immediate from the Morse-Smale condition.
\end{proof}
By this Lemma, $ (\overline{M}_{\to}(f)+\overline{M}_{\to}(-f))(\Delta)$ and
$(\overline{\mathcal A_{q_j}}\times \overline{\mathcal D_{p_i}})(\Delta)$ are well-defined.
It is clear that $ (\overline{M}_{\to}(f)+\overline{M}_{\to}(-f))(\Delta)=\\
(\overline{M}_{\to}(f)+\overline{M}_{\to}(-f))\setminus \Delta\cup\{(x,\frac{\pm\grad_xf}{\|\grad_xf\|})\mid x\in Y\setminus (\infty\cup \Crit(f))\}$ by the construction.
\begin{defini}
$\mathcal M_S^0(\pm f)= (\overline{M}_{\to}(f)+\overline{M}_{\to}(-f))(\Delta)
+\sum_{i,j}g_{ij}(\overline{\mathcal A_{q_i}}\times \overline{\mathcal D_{p_j}})(\Delta)
+\sum_{i,j}(-g_{ij})(\overline{\mathcal D_{p_j}}\times \overline{\mathcal A_{q_i}})(\Delta).$
\end{defini}
Let $\mathcal M_S(\pm f)$ be the extension of $\mathcal M_S^0(\pm f)$ to $C_2(Y)$.
\begin{lemma}\label{64}
$\mathcal M_S(\pm f)$ is a 4-cycle in $(C_2(Y),\partial C_2(Y))$.
\end{lemma}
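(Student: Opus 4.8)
The plan is to show that $\mathcal{M}_S(\pm f)$ is closed by computing its boundary face by face and showing all contributions cancel. First I would identify what the boundary of $\mathcal{M}_S^0(\pm f)$ looks like before passing to the blow-up $C_2(Y)$. By Lemma~\ref{Lem71}(3), the boundary of $\overline{M}_{\to}(f)+\overline{M}_{\to}(-f)$ consists of terms $\mathcal{A}_{p_i}\times\mathcal{D}_{p_i}$ and $\mathcal{A}_{q_j}\times\mathcal{D}_{q_j}$ (for both $f$ and $-f$, noting $\Crit(f)=\Crit(-f)$ but ascending/descending manifolds swap). Meanwhile the boundaries of the correction terms $g_{ij}(\overline{\mathcal{A}_{q_i}}\times\overline{\mathcal{D}_{p_j}})(\Delta)$ come from the boundaries of the closures $\overline{\mathcal{A}_{q_i}}$ and $\overline{\mathcal{D}_{p_j}}$, which by Morse theory decompose along lower-dimensional ascending/descending manifolds of critical points connected by broken trajectories. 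The key algebraic input is that $g=(g_{ij})$ is the inverse of the Morse boundary map $\partial=(\partial_{ij})$, i.e. $\sum_k \partial_{ik}g_{kj}=\delta_{ij}$ and $\sum_k g_{ik}\partial_{kj}=\delta_{ij}$.

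The main steps are then: (i) enumerate the codimension-one faces of $\mathcal{M}_S^0(\pm f)$ lying in the interior $\breve C_2(Y)=(Y\setminus\infty)^2\setminus\Delta$, which are of the form (broken trajectory pieces) $\mathcal{A}_{q_i}\times\mathcal{D}_{q_i}$, $\mathcal{A}_{p_i}\times\mathcal{D}_{p_i}$, and faces coming from $\partial\overline{\mathcal{A}_{q_i}}$, $\partial\overline{\mathcal{D}_{p_j}}$; (ii) check that the coefficients, which are products of the $g_{ij}$ with structure constants $\partial_{ij}$ counting broken trajectories, sum to zero precisely because $\partial g=g\partial=\mathrm{id}$ — this is exactly the standard verification that a combinatorial propagator has the right boundary, and is the reason $g$ was chosen to be $\partial^{-1}$; (iii) treat the faces that escape to $\partial C_2(Y)$: faces where the two points collide (landing in $S\nu_{\Delta(Y\setminus\infty)}\cong ST(Y\setminus\infty)$) and faces where a point runs off to $\infty$. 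The collision faces are allowed since we only need a cycle in $(C_2(Y),\partial C_2(Y))$ — they contribute to $\partial\mathcal{M}_S(\pm f)$ but lie in $\partial C_2(Y)$, so they do not obstruct closedness relative to the boundary. The faces running to $\infty$ must be shown to be empty or to lie in $\partial C_2(Y)$: since $f|_{N(\infty;Y)\setminus\infty}=q_a$ has no critical points near $\infty$, gradient trajectories near $\infty$ behave like straight lines, and the relevant limiting configurations sit inside $q^{-1}(\infty^2)$ or the unit normal sphere at $\infty$, hence in $\partial C_2(Y)$.

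The hard part will be step (iii): carefully controlling the behavior of the compactified moduli space near $\infty$ and near the diagonal after the iterated real blow-ups defining $C_2(Y)$, and checking that the extension $\mathcal{M}_S^0(\pm f)\rightsquigarrow \mathcal{M}_S(\pm f)$ does not introduce new interior boundary. I expect this to require the explicit local model $\gamma=-\grad q_a$ near $\infty$ together with the description of $\partial C_2(Y)$ recalled in Section~\ref{c_2(Y)}, and an argument parallel to Lemma~\ref{keylemma} / Lemma~\ref{lemmaA} for the local structure at collision points. The interior cancellation in step (ii) is essentially bookkeeping once the faces are correctly oriented using the orientation conventions of Section~2.3, so I would state it as a computation and refer to the analogous arguments in Watanabe~\cite{Watanabe} and the combinatorial-propagator discussion in Lescop~\cite{LescopHeegaard}. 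Finally, I would assemble these: all interior codimension-one faces cancel, so $\partial\mathcal{M}_S(\pm f)\subset\partial C_2(Y)$, which is precisely the statement that $\mathcal{M}_S(\pm f)$ is a $4$-cycle in $(C_2(Y),\partial C_2(Y))$.
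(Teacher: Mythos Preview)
Your proposal is correct and follows essentially the same approach as the paper: the heart of the argument is your step~(ii), the combinatorial cancellation using $g\partial=\partial g=\mathrm{id}$, which is exactly what the paper computes (showing $\sum_{i,j}g_{ij}\partial(\overline{\mathcal A_{q_i}}\times\overline{\mathcal D_{p_j}})=\partial\overline M_{\to}(f)\setminus\Delta$). The paper is considerably briefer than your plan for step~(iii): once the interior boundary cancels it simply concludes $\partial\mathcal M_S(\pm f)\setminus\partial C_2(Y)=\emptyset$, leaving the behavior near $\Delta$ and $\infty$ implicit in the definition of the extension $\mathcal M_S(\pm f)$.
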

\begin{proof}
Since
${\rm Im}(\partial (\overline{\mathcal A_{q_i}}\times\overline{\mathcal D_{p_j}})\to Y^2)=
\sum_k \partial_{ki}\overline{\mathcal A_{p_k}}\times \overline{\mathcal D_{p_j}}
+\sum_k\partial_{jk}\overline{\mathcal A_{q_i}}\times \overline{\mathcal D_{q_k}}$,
\begin{eqnarray*}
&&{\rm Im}(\sum_{i,j}g_{ij}\partial (\overline{\mathcal A_{q_i}}\times \overline{\mathcal D_{p_j}}\to Y^2))\\
&=&\sum_{i,j,k} g_{ij}\partial_{ki}\overline{\mathcal A_{p_k}}\times \overline{\mathcal D_{p_j}}
+\sum_{i,j,k}g_{ij}\partial_{jk}\overline{\mathcal A_{q_i}}\times \overline{\mathcal D_{q_k}}\\
&=&\sum_{i,j,k}\delta_{kj}\overline{\mathcal A_{p_k}}\times \overline{\mathcal D_{p_j}}
+\sum_{i,j,k}\delta_{ik}\overline{\mathcal A_{q_i}}\times \overline{\mathcal D_{q_k}}\\
&=&\sum_{j}\overline{\mathcal A_{p_j}}\times \overline{\mathcal D_{p_j}}
+\sum_{j}\overline{\mathcal A_{q_j}}\times \overline{\mathcal D_{q_j}}\\
&=& \partial \overline M_{\to}(f)\setminus \Delta .
\end{eqnarray*}
Therefore $\partial \mathcal M_S(\pm f)\setminus \partial C_2(Y)=\emptyset$.
\end{proof}
Under the identification $S\nu_{\Delta (Y\setminus\infty)}\cong ST(Y\setminus \infty)$, we have the following description.
\begin{lemma}\label{65}
$\partial \mathcal M_S(\pm f)\cap ST(Y\setminus  \infty)=\overline{\{ (x,\frac{\pm \grad_xf}{\|\grad_xf\|})\mid x\in Y\setminus (\infty\cup \Crit(f)\}}$.
\end{lemma}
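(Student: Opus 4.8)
The plan is to reduce to a local computation over each point of $Y\setminus(\infty\cup\Crit(f))$ and then pass to closures. By Lemma~\ref{64} we already know $\partial\mathcal M_S(\pm f)\subset\partial C_2(Y)$, so intersecting with the face $ST(Y\setminus\infty)=S\nu_{\Delta(Y\setminus\infty)}$ is meaningful. Fix $x_0\in Y\setminus(\infty\cup\Crit(f))$. Under the blow-down $q\colon C_2(Y)\to(Y\setminus\infty)^2$, a neighbourhood in $C_2(Y)$ of the fibre $ST_{x_0}Y\subset S\nu_{\Delta}$ is swept out by pairs $(x,y)$ with $x,y$ near $x_0$ and $x\ne y$, and the part of $\partial\mathcal M_S(\pm f)$ over $x_0$ records exactly the unit directions $(y-x)/\|y-x\|$ (read in a chart and identified with points of $ST_{x_0}Y$) along which $\mathcal M_S(\pm f)$ accumulates to $(x_0,x_0)$, counted with the multiplicity and sign dictated by the orientation of $\mathcal M_S(\pm f)$. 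So I would analyse, summand by summand in $\mathcal M_S^0(\pm f)=(\overline M_\to(f)+\overline M_\to(-f))(\Delta)+\sum_{i,j}g_{ij}(\overline{\mathcal A}_{q_i}\times\overline{\mathcal D}_{p_j})(\Delta)+\sum_{i,j}(-g_{ij})(\overline{\mathcal D}_{p_j}\times\overline{\mathcal A}_{q_i})(\Delta)$, how it approaches $(x_0,x_0)$.

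For the principal summand I would use the set-theoretic identity recorded just before the definition of $\mathcal M_S^0(\pm f)$: a pair $(x,\Phi_f^t(x))$ of distinct points on a gradient trajectory with $|t|$ small has direction close to $\pm\grad_{x_0}f/\|\grad_{x_0}f\|$ (the sign according to the sign of $t$, i.e.\ according to whether the pair comes from $\overline M_\to(f)$ or from $\overline M_\to(-f)$), and conversely $M_\to(f)$ and $M_\to(-f)$ each accumulate to $(x_0,x_0)$ along exactly one of these two directions, with multiplicity $+1$ coming from the coefficient $+1$ of $M_\to(\pm f)$ in $\mathcal M(\pm f)$. Hence the principal summand contributes to $\partial\mathcal M_S(\pm f)$ over $x_0$ exactly the two points $\pm\grad_{x_0}f/\|\grad_{x_0}f\|$; letting $x_0$ vary and taking the closure in $ST(Y\setminus\infty)$ it contributes $c_0(\grad f)=\overline c_{\grad f}\cup\overline c_{-\grad f}$, which by Lemma~\ref{keylemma} is a closed $3$-submanifold of $ST(Y\setminus\infty)$. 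This $c_0(\grad f)$ is precisely the right-hand side of the claimed equality, so it remains to see that the correction summands contribute nothing to $\partial\mathcal M_S(\pm f)\cap ST(Y\setminus\infty)$.

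If $x_0$ lies on no connecting trajectory, no $\overline{\mathcal A}_{q_i}\times\overline{\mathcal D}_{p_j}$ or $\overline{\mathcal D}_{p_j}\times\overline{\mathcal A}_{q_i}$ comes near $(x_0,x_0)$, and there is nothing to check. If $x_0$ lies on a trajectory from $p_j$ to $q_i$, then both $\overline{\mathcal A}_{q_i}\times\overline{\mathcal D}_{p_j}$ and $\overline{\mathcal D}_{p_j}\times\overline{\mathcal A}_{q_i}$ accumulate to $(x_0,x_0)$ along every direction of $ST_{x_0}Y$ (the two factors have dimension $2$ and, by the Morse--Smale condition, their tangent spaces span $T_{x_0}Y$ along the trajectory), so after blowing up along $\Delta$ each of these two summands acquires a full sphere-bundle boundary face over the connecting locus $\overline{\mathcal A_{q_i}\cap\mathcal D_{p_j}}$; since they are interchanged by the involution $\iota$ and enter $\mathcal M_S^0(\pm f)$ with the opposite coefficients $g_{ij}$ and $-g_{ij}$, what must be verified is that these two faces appear in $\partial\mathcal M_S(\pm f)$ with opposite orientations and hence cancel in the signed sum. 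This is where the sign bookkeeping becomes delicate: one has to combine the orientation conventions of Section~2 for ascending and descending manifolds, the fact that $\iota$ exchanges the two $2$-dimensional factors (so it is orientation preserving on the products and commutes with the blow up and with $\partial$), the fact that $\iota$ acts on each fibre $S^2$ of $S\nu_{\Delta}\cong ST(Y\setminus\infty)$ by the degree~$-1$ antipodal map, and the boundary orientations of the blown-up correction pieces (along with any compensating corner contributions that arise, as in the proof of Lemma~\ref{64}, from the broken-trajectory faces). Granting this cancellation, the correction summands drop out of $\partial\mathcal M_S(\pm f)\cap ST(Y\setminus\infty)$, and with the previous paragraph we conclude $\partial\mathcal M_S(\pm f)\cap ST(Y\setminus\infty)=c_0(\grad f)=\overline{\{(x,\pm\grad_x f/\|\grad_x f\|)\mid x\in Y\setminus(\infty\cup\Crit(f))\}}$. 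The hard part is precisely this last cancellation: it is where all the orientation conventions must line up, and verifying it carefully is the crux of the proof.
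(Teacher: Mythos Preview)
Your overall strategy matches the paper's: split $\partial\mathcal M_S(\pm f)\cap S\nu_{\Delta}$ into the contribution from the principal summand $(\overline M_\to(f)+\overline M_\to(-f))(\Delta)$, which gives the closure $\overline{\{(x,\pm\grad_xf/\|\grad_xf\|)\}}$, and the contributions from the correction summands, which must cancel. Your treatment of the principal summand is fine and coincides with the paper's.

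Where you diverge is in the cancellation of the correction terms. You route this through the involution $\iota$, the antipodal action on the $S^2$-fibres, and a chain of orientation conventions, and then stop short, declaring this ``the crux of the proof'' without actually carrying it out. The paper handles it in one line and without invoking $\iota$ at all: the new boundary face created by blowing up $\overline{\mathcal A_{q_i}}\times\overline{\mathcal D_{p_j}}$ along $\Delta$ is simply $\pi^{-1}\bigl(\overline{\mathcal A_{q_i}\cap\mathcal D_{p_j}}\bigr)$, the full sphere bundle over the closure of the connecting locus, because by Morse--Smale the two factors span $T_xY$ along the intersection; exactly the same description holds for the boundary of $(\overline{\mathcal D_{p_j}}\times\overline{\mathcal A_{q_i}})(\Delta)$, since $\mathcal A_{q_i}\cap\mathcal D_{p_j}=\mathcal D_{p_j}\cap\mathcal A_{q_i}$. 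Thus the two correction boundaries are the \emph{same} chain in $ST(Y\setminus\infty)$ appearing with coefficients $g_{ij}$ and $-g_{ij}$, and they cancel immediately.

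So your argument is not wrong, but it is incomplete precisely at the point you flag as hard, and the difficulty is largely self-inflicted: by introducing $\iota$ and the fibrewise antipodal you create a sign-tracking problem that the direct description of the exceptional boundary as $\pi^{-1}(\overline{\mathcal A_{q_i}\cap\mathcal D_{p_j}})$ avoids. (Indeed, your phrasing ``with opposite orientations and hence cancel'' is dangerous here: given the opposite coefficients $\pm g_{ij}$, cancellation requires the two boundary pieces to carry the \emph{same} orientation, not opposite ones.) The recommended fix is to drop the $\iota$-argument entirely and compute the exceptional boundary of each product blow-up directly, as the paper does.
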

\begin{proof}
Note that $(\overline{\mathcal A_{q_i}}\times \overline{\mathcal D_{p_j}})\cap\Delta=\overline{\mathcal A_{q_i}\cap \mathcal D_{p_j}}$.
By the definition of blow up, we have
$\partial \mathcal M_S(\pm f)\cap S\nu_{\Delta(Y\setminus \infty)}$
$$=\overline{\left\{ \left(x,\frac{\pm \grad_xf}{\|\grad_xf\|}\right)\right\}}+\sum_{i,j}g_{ij}\pi^{-1}(\overline{\mathcal A_{q_i}\cap \mathcal D_{p_j}}) +\sum_{i,j}(-g_{ij})\pi^{-1}(\overline{\mathcal D_{p_j}\cap \mathcal A_{q_i}})$$ where $\pi:STY\to Y$ is the projection.

Since $\sum_{i,j}g_{ij}\pi^{-1}(\overline{\mathcal A_{q_i}\cap \mathcal D_{p_j}})+\sum_{i,j}(-g_{ij})\pi^{-1}(\overline{\mathcal D_{p_j}\cap \mathcal A_{q_i}})=0$ as chains, we conclude the proof.
\end{proof}
\appendix
\section{Another proof of $\widetilde z_1(Y)=z_1^{\rm KKT}(Y)$.}
In this section we give a more direct proof of Preposition~\ref{77} in the case of $n=1$.
Remark that $\mathcal A_1(\emptyset)=\mathbb Q [\theta]$ and $\sharp \mathcal E_1=96$.
\begin{prop}[Proposition~\ref{77} in the case of $n=1$]\label{rationalprop}
$\widetilde z_1^{\rm anomaly}(\tau_Y^*\vec a)=\frac{1}{4}\sigma_{Y\setminus \infty}(\tau_Y)\delta_1.$
\end{prop}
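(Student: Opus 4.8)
The plan is to use that for $n=1$ the whole anomaly collapses to one characteristic--number computation, because $\mathcal A_1(\emptyset)=\QQ[\theta]$ is one--dimensional and the theta graph is the only underlying diagram. First I would fix a compact connected $X$ with $\partial X=Y$ and $\chi(X)=0$, take $a_1=a_2=a_3=a\in S^2$ (legitimate, since the anomaly integral is independent of $\vec a$), let $\tau_Y'=\tau_Y|_{Y\setminus N(\infty;Y)}\cup\tau_{S^3}|_{N(\infty;S^3)}$ be the induced framing of $T^vX|_Y=TY$, and choose generic $\beta_i\in\Gamma T^vX$ and closed $2$--forms $W(\gamma_i)$ with $W(\gamma_i)|_{STY}=\tfrac12(\tau_Y')^*\omega_{S^2}^{a}$. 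For $n=1$ one has $S_2(T^vX)\cong ST^vX$ (as $\breve S_2(\RR^3)=S^2$), and for the theta graph every projection $\phi_i(\Gamma)$ is the identity or the fibrewise antipodal map. Since the $W(\gamma_i)$ and $\omega_{S^2}^{a}$ are anti--symmetric, reversing an edge orientation (or swapping the two vertex labels) multiplies the integrand by $-1$ and $[\Gamma]$ by $-1$, while relabelling the edges leaves both unchanged (each edge carries an even number of half--edges, and the $W(\gamma_i)$ are $2$--forms, hence commute). Thus all $96=\sharp\mathcal E_1$ diagrams contribute equally, and
\[
\widetilde z_1^{\rm anomaly}(\tau_Y^*\vec a)=\Big(96\int_{ST^vX}W(\gamma_1)\wedge W(\gamma_2)\wedge W(\gamma_3)\Big)[\theta]-\mu_1\Sign X-c_1 .
\]

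Next I would compute the fibre integral by Leray--Hirsch for the $S^2$--bundle $\pi\colon ST^vX\to X$. Each $W(\gamma_i)$ has fibre integral $1$ (it is half the Poincar\'e dual of $c_0(\beta_i)$, which meets each fibre in the antipodal pair $\pm\beta_i(x)/\|\beta_i(x)\|$), so $[W(\gamma_i)]=t+\pi^*\gamma_i$ with $t$ the Leray--Hirsch generator and $\gamma_i\in H^2(X;\RR)$; since $X$ is connected with non--empty boundary, $H^4(X;\RR)=0$, so only the $t^3$ term of $(t+\pi^*\gamma_1)(t+\pi^*\gamma_2)(t+\pi^*\gamma_3)$ survives, and the Gysin relation $t^2=\pm\tfrac14\pi^*p_1(T^vX)$ (the Euler class of the odd--rank bundle $T^vX$ being rationally $0$; the sign depending on the orientation conventions) gives
\[
\int_{ST^vX}W(\gamma_1)\wedge W(\gamma_2)\wedge W(\gamma_3)=\pm\tfrac14\big\langle p_1\big(TX,\eta_Y\oplus\tau_Y'\big),[X,Y]\big\rangle ,
\]
where I used $H^2(Y;\RR)=0$ (so that $t|_{STY}$ is exactly the framing $\tau_Y'$, which is what makes the right--hand side a relative Pontryagin number) together with $TX=\underline{\RR}\oplus T^vX$.

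Finally I would match this with the signature defect: by definition $\sigma_{Y\setminus\infty}(\tau_Y)=\sigma_Y(\tau_Y')-2$, and $\sigma_Y(\tau_Y')$ equals $\langle p_1(TX,\eta_Y\oplus\tau_Y'),[X,Y]\rangle-3\Sign X$ for any admissible $X$ (the standard formula for the signature defect, with the normalization fixed by $\sigma_{S^3}(\tau_{S^3})=2$). Substituting, $\widetilde z_1^{\rm anomaly}(\tau_Y^*\vec a)=\pm24\langle p_1\rangle[\theta]-\mu_1\Sign X-c_1$; independence of $X$ (Lemma~\ref{lem48}) then fixes the coefficient of $\Sign X$ and forces $\mu_1=72[\theta]$ (this is the explicit determination of $\mu_1$), evaluating at $Y=S^3$ with $\tau_{\RR^3}$ and a fixed $X$ such as $(T^4\sharp\CPP^2)\setminus B^4$ (using $\sigma_{\RR^3}(\tau_{\RR^3})=0$ and Corollary~\ref{Lem711}) forces $c_1=48[\theta]$, and these agree with $\delta_1=\tfrac43\mu_1=96[\theta]$ and $c_1=\tfrac12\delta_1$ of Lemmas~\ref{Lem74} and~\ref{Lem75}; a short substitution then yields $\widetilde z_1^{\rm anomaly}(\tau_Y^*\vec a)=\tfrac14\sigma_{Y\setminus\infty}(\tau_Y)\,\delta_1$. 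I expect the main obstacle to be the sign and orientation bookkeeping in the Gysin computation --- pinning down that the ambiguous $\pm\tfrac14$ is $+\tfrac14$ once the orientations of $S_2(\RR^3)=S^2$, of $ST^vX$ and of the Jacobi diagrams are reconciled --- together with checking that the boundary $ST^vX|_Y$, and in particular the prescribed form $\tfrac12\tau_{S^3}^*\omega_{S^2}^{a}$ near $\infty$, contributes nothing beyond the relative $p_1$, the relevant vanishing being of the type used in Lemma~\ref{dimreason}.
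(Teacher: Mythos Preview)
Your outline is essentially the paper's own argument: reduce to the single theta integral $96\int_{ST^vX}W(\tau_Y^*a)^3\,[\theta]$, identify this with $\tfrac14\langle p_1(TX;\tau_Y'),[X,Y]\rangle$, and read off $\mu_1,c_1,\delta_1$ from independence of $X$ and the normalisation at $Y=S^3$. The endgame and the numerical values ($\mu_1=72[\theta]$, $c_1=48[\theta]$, $\delta_1=96[\theta]$) agree with the paper.

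The one place where your argument is thinner than the paper's is the step you call ``the Gysin relation $t^2=\pm\tfrac14\pi^*p_1(T^vX)$''. This is not a standard identity for an arbitrary Leray--Hirsch generator $t$: if $F_X$ denotes the vertical tangent bundle of $ST^vX\to X$ and one writes $e(F_X)=2t+\pi^*\alpha$ (both sides have fibre integral $2$), then squaring gives $t^2=\tfrac14\pi^*(p_1-\alpha^2)-t\,\pi^*\alpha$, so the relation you want holds exactly only for the specific choice $t=\tfrac12 e(F_X)$. The paper supplies precisely this missing identification as a short lemma: the fibrewise gradient of $u\mapsto\langle u,\beta(x)\rangle$ is a section of $F_X$ transverse to the zero section with zero locus $c_0(\beta)$, hence $2[W(\tau_Y^*a)]=e(F_X;\tau_Y)$ in $H^2(ST^vX,STY)$. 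With that in hand one gets directly
\[
\int_{ST^vX}W^3=\tfrac18\int_{ST^vX}e(F_X;\tau_Y)\,p_1(F_X;\tau_Y)=\tfrac18\int_{ST^vX}e(F_X;\tau_Y)\,\pi^*p_1(TX;\tau_Y')=\tfrac14\int_Xp_1(TX;\tau_Y'),
\]
using $\underline{\RR}\oplus F_X=\pi^*T^vX$, $\underline{\RR}\oplus T^vX=TX$, and $\int_{\rm fibre}e(F_X)=\chi(S^2)=2$. Relatedly, your assertion that ``$H^4(X;\RR)=0$, so only the $t^3$ term survives'' is not quite right as stated: the cross term $t^2\pi^*(\sum_i\gamma_i)$ involves $\sum_i\gamma_i\in H^2(X)$, not $H^4(X)$. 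It does in fact vanish, but only after one uses the corrected relation above (then $t^2\pi^*\gamma=\tfrac14\pi^*(p_1\gamma)-t\,\pi^*(\alpha\gamma)$, and both $p_1\gamma\in H^6(X)$ and $\alpha\gamma\in H^4(X;\RR)$ are zero). So your final answer is correct, but the justification at this step should go through $e(F_X)$ rather than an unspecified ``Gysin relation''.
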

To show this proposition we first prepare some notations and lemmas.
Let $\pi_1:F_X\to X$ be the tangent bundle along the fiber of $\pi_2: ST^vX\to X$.
Let $T^vX/TY$ be the real vector bundle over $X/Y$ obtained by collapsing $STY$ to a point using 
the framing $\tau_Y\cup\tau_{S^3}=\tau_Y|_{Y\setminus N(\infty;Y)}\cup\tau_{S^3}|_{N(\infty;Y)}$.
We define $F_{X/Y}, ST^vX/STY$ as same way.

Let $e(F_X;\tau_Y)\in H^2(ST^vX/STY)=H^2(ST^vX, STY)$ be the Euler class of $F_{X/Y}$
and let $p_1(F_X;\tau_Y)\in H^2(ST^vX/STY)=H^2(ST^vX, STY)$ be the 1st Pontrjagin class of $F_{X/Y}$.
By a standard argument, for example the Chern-Weil theory, we have $p_1(F_X;\tau_Y)=e_1(F_X;\tau_Y)^2$.
\begin{lemma}\label{lemma1}
$2[W(\tau_Y^*a)]=e(F_X;\tau_Y)\in H^2(ST^vX/STY)$.
\end{lemma}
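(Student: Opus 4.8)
The plan is to move the identity --- which lives in $H^2(ST^vX/STY;\QQ)=H^2(ST^vX,STY;\QQ)$ --- to an identity of absolute classes in $H^2(ST^vX;\QQ)$, and there to recognise both sides as the Euler class of the vertical (fibrewise) tangent bundle $F_X$ of the $S^2$-bundle $\pi\colon ST^vX\to X$. Write $j\colon ST^vX\to ST^vX/STY$ for the collapse. First, $j^*F_{X/Y}=F_X$: over $ST^vX\setminus STY$ this is the identity, and over $STY$ both sides restrict, via $\tau_Y\cup\tau_{S^3}$, to $\mathrm{pr}_{S^2}^*TS^2$; hence $j^*e(F_X;\tau_Y)=e(F_X)$. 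Second, by the condition defining the propagator $W(\tau_Y^*a)$ (``$2W$ is dual to $[c_0(\beta),\partial c_0(\beta)]$'') together with the boundary normalisation $W(\tau_Y^*a)|_{STY}=\tfrac12(\tau_Y')^*\omega_{S^2}^{a}=\tfrac12\,\mathrm{pr}_{S^2}^*\omega_{S^2}^{a}$, which is pulled back from $S^2$ and hence descends along $j$, the class $2[W(\tau_Y^*a)]$ makes sense in $H^2(ST^vX/STY;\QQ)$ and $j^*\bigl(2[W(\tau_Y^*a)]\bigr)$ is the Poincar\'e--Lefschetz dual of $[c_0(\beta),\partial c_0(\beta)]\in H_4(ST^vX,STY;\QQ)\cong H^2(ST^vX;\QQ)$, where $\beta=\beta_i$ is the generic section of $T^vX$ fixed in the construction. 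Finally, since $STY\cong Y\times S^2$ via the framing and $Y$ is a rational homology $3$-sphere, $H^1(STY;\QQ)\cong H^1(Y;\QQ)=0$, so $j^*\colon H^2(ST^vX/STY;\QQ)\hookrightarrow H^2(ST^vX;\QQ)$ is injective. Thus it suffices to prove $\mathrm{PD}\,[c_0(\beta),\partial c_0(\beta)]=e(F_X)$ in $H^2(ST^vX;\QQ)$.

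To prove this I would realise $c_0(\beta)$ as the transverse zero set of a genuine section of $F_X$. Let $V$ be the vertical vector field on $ST^vX$ sending $(x,w)$, $w\in S(T^vX_x)$, to the orthogonal projection $\mathrm{pr}_{T_wS(T^vX_x)}\beta(x)\in(F_X)_{(x,w)}$. This $V$ is smooth (since $\beta$ is) and its zero set $Z(V)=\{(x,w):\beta(x)\in\RR w\}$ is, over $X\setminus\beta^{-1}(0)$, the graph of $\pm\beta/\|\beta\|$ and, over the $1$-manifold $\beta^{-1}(0)$ (here transversality of $\beta$ is used), the union of the full fibres --- precisely the set added in forming the closure $c_0(\beta)$, by the same normal-form analysis near $\beta^{-1}(0)$ used in the proof of Lemma~\ref{keylemma}. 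A short computation of $dV$ along $Z(V)$ --- varying $w$ at the points $(x,\pm\beta(x)/\|\beta(x)\|)$, and varying $\beta$ in its normal directions at the points over $\beta^{-1}(0)$ --- shows that $V$ is transverse to the zero section. Hence $\mathrm{PD}[Z(V)]=e(F_X)$ and $Z(V)=c_0(\beta)$ as sets; comparing orientations (that of $c_0(\beta)$ being the one making $c_0(\beta)\to X$ degree-preserving) identifies the two as oriented relative cycles in $(ST^vX,STY)$, which yields the displayed equality and hence the lemma.

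The step I expect to be the main obstacle is the transversality of $V$ and the orientation comparison \emph{along the exceptional part of $Z(V)$ lying over $\beta^{-1}(0)$}: this is exactly where the $S^2$-bundle structure of $c_0(\beta)$ over $\beta^{-1}(0)$ enters, as in Lemma~\ref{keylemma}, and where signs (and the behaviour of the collapse $j$ near $STY$) must be watched. Everything else --- the descent of $2[W(\tau_Y^*a)]$ to the quotient, the naturality $j^*e(F_{X/Y})=e(F_X)$, and the injectivity of $j^*$ from $H^1(Y;\QQ)=0$ --- is formal. I also note that the factor $2$ in the statement is not the content: it is built into the normalisation of $W(\tau_Y^*a)$ (it is $2W$, not $W$, that is dual to $c_0(\beta)$) and it matches $2=\chi(S^2)=\langle e(F_X),[\text{fibre}]\rangle$, so that both sides have fibre degree $2$; the real point is the vanishing of the $H^2(X;\QQ)$-component, which is what the section $V$ provides.
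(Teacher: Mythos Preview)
Your proof is correct and takes essentially the same approach as the paper: your section $V(x,w)=\mathrm{pr}_{w^\perp}\beta(x)$ is exactly the fibrewise gradient of the height function $f(x,u)=\langle u,\beta(x)\rangle$ that the paper uses to define its $V$, and both arguments conclude from $V^{-1}(0)=c_0(\beta)$ transversally. The paper simply asserts the transversality and the identification with the relative Euler class in two lines, without your additional cohomological framing via injectivity of $j^*$.
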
 
\begin{proof}
Let $\beta$ be the section of $T^vX$ such that $\beta|_{\partial X}=(\tau_Y\cup\tau_{S^3})^*a$ as Subsection~\ref{anomaly term}.
We define the map $f:ST^vX\to \RR$ by 
$$f(x)=\langle u,\beta(x)\rangle_{(T^vX)_x}$$
where $\langle,\rangle_{(T^vX)_x}$ is the standard inner product on $(T^vX)_x(\cong \RR^3)$.
We define the vector field $V\in \Gamma F_X$ by $V|_{(ST^vX)_x}=\grad (f|_{(ST^vX)_x})$ for any $x\in X$.
Thus $V$ is transverse to the zero section in $F_X$ and $V^{-1}(0)=c_0(\beta)$.
Thus the Poincar\'e dual of $(c_0(\beta),\partial c_0(\beta))$ represents $e(F_X;\tau_Y)$.
Since the closed 2-form $2W(\tau_Y^*a)$ represents the Poincar\'e dual of $(c_0(\beta),\partial c_0(\beta))$
and $W(\tau_Y^*a)|_{STY}=(\tau_Y\cup\tau_{S^3})^*\omega_{S^2}^a$,
we conclude the proof.  
\end{proof}
\begin{proof}[proof of Proposition~\ref{rationalprop}]
By the Lemma~\ref{lemma1}, we have
\begin{eqnarray*}
\int_{S_2(T^vX)}W(\tau_Y^*a)^3&=&\frac{1}{8}\int_{S_2(T^vX)}e(F_X;\tau_Y)^3\\
&=&\frac{1}{8}\int_{S_2(T^vX)}e(F_X;\tau_Y)p_1(F_X;\tau_Y)\\
&\stackrel{(*)}{=}&\frac{1}{8}\int_{S_2(T^vX)}e(F_X;\tau_Y)\pi_2^*p_1(TX;\tau_Y)\\
&=&\frac{1}{4}\int_{X}p_1(TX;\tau_Y)\\
&=&\frac{1}{4}\sigma_Y(\tau_Y\cup\tau_{S^3})+\frac{3}{4}\Sign X\\
&=&\frac{1}{4}\sigma_{Y\setminus \infty}(\tau_Y)+\frac{3}{4}\Sign X+\frac{1}{2}.
\end{eqnarray*}
The equation (*) is given by the following two relations: 
$\underline{\RR}\oplus F_X=\pi^*T^vX$ and $\underline{\RR}\oplus T^vX=TX$. 
Then we have
$$\widetilde z^{\rm anomaly}(\tau_Y^*\vec a)=96\int_{S_2(T^vX)}W(\tau_Y^*a)^3[\theta]-\mu_1\Sign X-c_1~~~~$$
$$~~~=\frac{96}{4}[\theta]\sigma_{Y\setminus \infty}(\tau_Y)+(72[\theta]-\mu_1)\Sign X-(c_1-48[\theta]).$$
Since this equation holds for any $\tau_Y$ and $X$, then we have $\mu_1=72[\theta]$, $c_1=48[\theta]$, $\delta_1=96[\theta]$.
Thus $\widetilde z^{\rm anomaly}_1(\tau_Y^*\vec a)=\frac{1}{4}\sigma_{Y\setminus \infty}(\tau_Y)\delta_1$.
\end{proof}
\bibliographystyle{alpha.bst}
\bibliography{tsbib}
\end{document}